\DeclareMathOperator\defect{def}
\DeclareMathOperator\Gal{Gal}
\DeclareMathOperator\wt{wt}
\DeclareMathOperator\avg{avg}
\DeclareMathOperator\supp{supp}
\DeclareMathOperator\LP{LP}
\DeclareMathOperator\cl{cl}
\DeclareMathOperator\Stab{Stab}
\DeclareMathOperator\rk{rk}
\DeclareMathOperator\id{id}
\DeclareMathOperator\conv{conv}
\def\GL{{\mathrm{GL}}}
\def\dom{{\mathrm{dom}}}
\author{\textsc{Felix Schremmer, Ryosuke Shimada and Qingchao Yu}}
\title{Affine Deligne-Lusztig Varieties of Positive Coxeter Type}
\numberwithin{equation}{section}
\newtheorem{theorem}[equation]{Theorem}
\newtheorem{alphatheorem}{Theorem}
\newtheorem{proposition}[equation]{Proposition}
\newtheorem{alphaproposition}[alphatheorem]{Proposition}
\newtheorem{lemma}[equation]{Lemma}
\newtheorem{corollary}[equation]{Corollary}
\theoremstyle{definition}
\newtheorem{definition}[equation]{Definition}
\theoremstyle{remark}
\newtheorem{example}[equation]{Example}
\newtheorem{remark}[equation]{Remark}
\def\abs#1{{\left\lvert{#1}\right\rvert}}
\def\doubleparen#1{{(\!({#1})\!)}}
\let\oldqedsymbol\qedsymbol
\def\qedaddendum{}
\def\qedsymbol{\oldqedsymbol\qedaddendum}
\def\af{{\mathrm{af}}}
\def\rightqed{\pushQED{\qed}\qedhere\popQED}
\def\a{\alpha}
\def\b{\beta}
\def\G{\Gamma}
\def\D{\Delta}
\def\s{\sigma}
\def\t{\tau}
\def\k{\kappa}
\def\l{\lambda}
\def\<{\langle}
\def\>{\rangle}
\def\bq{\mathbf q}
\newcommand{\BA}{\mathbb {A}}
\newcommand{\BF}{\mathbb {F}}
\newcommand{\BG}{\mathbb {G}}
\newcommand{\BJ}{\mathbb {J}}
\newcommand{\BN}{\mathbb {N}}
\newcommand{\BP}{\mathbb {P}}
\newcommand{\BQ}{\mathbb {Q}}
\newcommand{\BR}{\mathbb {R}}
\newcommand{\BS}{\mathbb {S}}
\newcommand{\BZ}{\mathbb {Z}}
\newcommand{\CI}{\mathcal {I}}
\newcommand{\CO}{\mathcal {O}}
\newcommand{\CP}{\mathcal {P}}
\newcommand{\CT}{\mathcal {T}}
\def\tW{\widetilde W}
\newcommand{\Ad}{{\mathrm{Ad}}}
\newcommand{\de}{{\mathrm{def}}}
\newcommand{\remind}[1]{{\textbf {** #1 **}}}
\renewcommand{\setminus}{-}
\begin{document}
\maketitle

\begin{abstract}
We introduce a class of affine Deligne--Lusztig varieties that we call of positive Coxeter type. We show that the affine Deligne--Lusztig varieties of positive Coxeter type have a very simple and explicitly described geometric structure. Conversely, we explain how some of these geometric properties can be used to characterize this class.
These results vastly generalize the work of He--Nie--Yu on affine Deligne–Lusztig varieties of finite Coxeter type, leading to applications to Shimura varieties that were not possible using the old notion.
\end{abstract}

\section{Introduction}
\subsection{Background}

The notion of affine Deligne--Lusztig varieties are introduced by Rapoport, cf. \cite{Rapoport2002}. These are schemes associated with any reductive group $G$ defined over a local field $F$, and arise naturally when studying the reduction of Shimura varieties (in the case of mixed characteristic) or moduli spaces of local $G$-shtukas (in the case of equal characteristic).

Broadly speaking, one may summarize that the affine Deligne--Lusztig varieties compare two natural stratifications of the group $G(\breve F)$, where $\breve F$ is the completion of the maximal unramified extension of $F$.

Denoting the Frobenius of $\breve F/F$ by $\sigma$. The first stratification of $G(\breve F)$ is given by the $\sigma$-conjugacy classes. Let
\begin{align*}
B(G) &= \{[b]\mid b \in G(\breve F)\},
\end{align*}
where $[b]=\{g^{-1} b \sigma(g)\mid g\in G(\breve F)\}$. This set has been studied and used quite a lot in the past. From Kottwitz \cite{Kottwitz1985, Kottwitz1997}, we know that the $\sigma$-conjugacy class of an element $b\in G(\breve F)$ is uniquely determined by two invariants, called its \emph{Kottwitz point} $\kappa(b)\in X_\ast(T)_{\Gamma}$ and its dominant Newton point $\nu(b) \in X_\ast(T)_{\Gamma_0}\otimes\mathbb Q$. Here, $T\subset G$ is a maximal torus, $X_\ast(T)$ is its cocharacter group, and $\Gamma_0$ resp.\ $\Gamma$ denote the absolute Galois groups of $\breve F$ resp.\ $F$.

There is a partial order on $B(G)$, whose properties have been studied by a very influential paper of Chai \cite{Chai2000}. A very peculiar connection between the set $B(G)$ and certain Coxeter elements in the Weyl group $W$ of $G$ has recently been introduced by Nie \cite{Nie2023}. This enabled Ivanov and Nie to study $p$-adic Deligne--Lusztig schemes of Coxeter type \cite{Ivanov2023, Ivanov2024}.

The second stratification is the \emph{Iwahori-Bruhat decomposition}
\begin{align*}
G(\breve F) = \bigsqcup_{x\in\tW} \mathcal I \dot{x}\mathcal I,
\end{align*}
where $\CI$ is a fixed $\s$-stable Iwahori subgroup and $\tW$ is the \emph{Iwahori subgroup}. Typically, the geometric of this decomposition are closely related to the combinatorial properties of the group $\tW$ and the representation theory of its Iwahori-Hecke algebra.

One may compare these two stratifications of $G(\breve F)$ by studying the intersections $[b]\cap \mathcal I\dot x\mathcal I$. The \emph{affine Deligne--Lusztig variety} associated to an element $x\in\tW$ and $b\in G(\breve F)$ is the reduced subscheme of the \emph{affine flag variety} $G(\breve F)/\mathcal I$ whose geometric points are given by
\begin{align*}
X_x(b) = \{g\in G(\breve F)/\mathcal I\mid g^{-1} b \sigma(g)\in \mathcal I\dot x\mathcal I\}.
\end{align*}
Note that $X_x(b)$ only depends on the $\sigma$-conjugacy class $[b]\in B(G)$. We may associate $X_x(b)$ to any pair $(x,[b]) \in \tW\times B(G)$.

In general, few things are known on the geometry of $X_x(b)$. While it is known that $X_x(b)$ must be finite-dimensional whenever it is non-empty, it is not known for which pairs $(x,[b])$ the affine Deligne--Lusztig variety is non-empty, and no general dimension formula is known that works for all these cases. The $\sigma$-stabilizer
\begin{align*}
\mathbb J_b(F) = \{g\in G(\breve F)\mid g^{-1} b\sigma(g) = b\}\subset G(\breve F)
\end{align*}
acts on $X_x(b)$ naturally by left multiplication, and it is known that $X_x(b)$ has only finitely many $\mathbb J_b(F)$-orbits of irreducible components. In hyperspecial level, there is a nice description of the $\mathbb J_b(F)$-orbits of irreducible components, as conjectured by M.\ Chen and X.\ Zhu \cite{Zhou2020, Nie2018, Takaya2022}. For Iwahori level however, the problem has proved to be very difficult, and no general answer is known.

Some partial answers are known to all of the aforementioned questions, and they have been fully answered in a few special cases in the past. For any given $x\in\tW$, the set
\begin{align*}
B(G)_x := \{[b]\in B(G)\mid X_x(b)\neq\emptyset\}
\end{align*}
has a unique minimal and a unique maximal element, and both have been explicitly computed \cite{Viehmann2021, Viehmann2014, Schremmer2022_newton}.
In \cite{Schremmer2022_newton}, the first named author has introduced the notion of \emph{length positivity} to describe the Newton point of the maximal element. 
This associates to every $x\in\widetilde W$ a natural subset $\LP(x)\subseteq W$, called the set of length positive elements of $x$, cf. \S\ref{sec:Palcove}.

Note that the Frobenius $\sigma$ acts on the set of simple reflections $S$ of the finite Weyl group $W$. By definition, a \emph{partial $\sigma$-Coxeter element} $w\in W$ is an element that can be written as a product of simple reflections $w = s_1\cdots s_n$ such that the $s_i$ lie in pairwise distinct $\sigma$-orbits.

Recall that $\tW$ decomposes as a semi-direct product $\tW = W\ltimes X_\ast(T)_{\Gamma_0}$, allowing us to write elements $x\in\widetilde W$ in the form $x = w\varepsilon^\mu$, where $w\in W$ and $\mu\in X_\ast(T)_{\Gamma_0}$.
Among all $v\in W$ such that $v^{-1}\mu$ is dominant, there is a uniquely determined one of minimal length in $W$. 
For this uniquely determined $v$, we define $\eta_\sigma(x) \coloneqq \sigma^{-1}(v)^{-1} wv$.
A recent article of He--Nie--Yu \cite{He2022} studies those $X_x(b)$ for which $\eta_\sigma(x)\in W$ is a partial $\sigma$-Coxeter element. In this case, one says that $x$ is an \emph{element with finite Coxeter part}.

In fact, the element $v$ constructed above always lies in $\LP(x)$.
From this perspective, it is natural to generalize the definition of an element with finite Coxeter part as follows (cf. Remark \ref{rmk:HNY}):
We say that an element $x = wt^{\mu}\in\tW$ is of positive Coxeter type, if $\sigma^{-1}(v)^{-1} wv$ is a partial $\s$-Coxeter element for some $v\in \LP(x)$. In this case, we say $(x,v)$ is a \emph{positive Coxeter pair} (cf. \S\ref{sec:Palcove}).
Theorem \ref{thm:introGeometry} below shows that the simple description of $X_x(b)$ obtained by He--Nie--Yu remains true after this generalization.

\subsection{Main Result}
We may assume that $G$ is quasi-split.
The first main result of this paper is analogous to the main result of He--Nie--Yu, but allowing to replace the element $\eta_\sigma(x)$ by $\sigma^{-1}(v)^{-1} wv$ for \emph{any} $v\in \LP(x)$. 

\begin{alphatheorem}[{Cf.\ Theorem~\ref{thm:fctConsequences}}]\label{thm:introGeometry}
Let $x=w\varepsilon^\mu \in\widetilde W$ and $v\in \LP(x)$ such that $(x,v)$ is a positive Coxeter pair, that is, $\sigma^{-1}(v)^{-1} w v$ is a partial $\sigma$-Coxeter element. Let $J = \supp_\sigma(\sigma^{-1}(v)^{-1} wv)$ denote its $\sigma$-support.
\begin{enumerate}[(a)]
\item The set $B(G)_x$ is given by the interval
\begin{align*}
B(G)_x = \{[b]\in B(G)\mid [b_{x,\min}]\leq [b]\leq [b_{x,\max}]\},
\end{align*}
where $[b_{x,\min}]$ and $[b_{x,\max}]$ are the minimal and the maximal element in $B(G)_x$, which are described in \S\ref{sec:3}. 
\item For any $[b]\in B(G)_x$, the affine Deligne--Lusztig variety $X_x(b)$ is equi-dimensional of dimension
\begin{align*}
\dim X_x(b) = \frac 12\Bigr(\ell(x) + \ell(\sigma^{-1}(v)^{-1} wv) - \langle \nu(b),2\rho\rangle - \defect(b)\Bigr).
\end{align*}
Here, $\ell(x)\in\mathbb Z_{\geq 0}$ is the Coxeter-theoretic length of $x$ and $\defect(b) = \rk_F(G) - \rk_F(\mathbb J_b)$.
\item For any $[b]\in B(G)_x$, $\mathbb J_b(F)$ acts transitively on the set of irreducible components of $X_x(b)$. Moreover, each irreducible component of $X_x(b)$ is an iterated fibre bundle, with $\mathbb G_m$ or $\mathbb A^1$ fibre, over an irreducible component of a classical Deligne--Lusztig variety of Coxeter type. Both the classical Deligne--Lusztig variety and the multiplicity of each fibre are explicitly described, analogously to \cite[Theorem~7.1]{He2022}.
\end{enumerate}
\end{alphatheorem}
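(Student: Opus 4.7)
The plan is to follow the reduction methodology pioneered by He and refined in He--Nie--Yu, adapting it to the more flexible length-positive setting. For part (a), I would combine the general non-emptiness criterion via length positivity with the generic Newton point formula in terms of the quantum Bruhat graph. Given $v \in \LP(x)$ such that $\sigma^{-1}(v)^{-1}wv$ is a partial $\sigma$-Coxeter element, the weights of paths in the quantum Bruhat graph from $v$ to $\sigma(v)$ become straightforward to evaluate (cf.\ Remark~\ref{rem:qbgForCoxeter}), and a direct calculation identifies $[b_{x,\max}]$ with the $\sigma$-average supported on $J$. The minimal class $[b_{x,\min}]$ is read off from a straight representative reached by iterated cyclic shifts. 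Interval-fillingness then follows from the standard monotonicity properties of $B(G)_x$ combined with Chai's structural results on the poset $B(G)$.

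For parts (b) and (c), the core tool is Deligne--Lusztig reduction: for a simple affine reflection $s$ with $\ell(sx\sigma(s)) \leq \ell(x)$, one obtains a decomposition of $X_x(b)$ in terms of $X_{sx\sigma(s)}(b)$ and, when the length strictly decreases, also $X_{sx}(b)$, with fibres isomorphic to $\mathbb{A}^1$ or $\mathbb{G}_m$. I would iterate this reduction until reaching an element $x_0$ of minimal length in its $\sigma$-conjugacy class. For positive Coxeter pairs, such minimal elements are lifts of partial $\sigma$-Coxeter elements in the finite Weyl group of a suitable Levi, and the corresponding affine Deligne--Lusztig variety reduces, up to a $\mathbb{J}_b(F)$-torsor, to a classical Deligne--Lusztig variety of Coxeter type in the reductive quotient of a parahoric. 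By Lusztig's classical results, these Coxeter-type varieties are transitively acted on by the Frobenius centralizer, giving the transitivity claim in (c), and their dimension equals the length of the partial $\sigma$-Coxeter element. Combining this with the $\mathbb{A}^1$- and $\mathbb{G}_m$-contributions of the reduction steps yields both the dimension formula of (b) and the iterated fibre bundle description of (c).

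The principal obstacle, and the step where the genuine novelty of the positive Coxeter setting enters, is a stability lemma: if $(x, v)$ is a positive Coxeter pair and $s$ is a simple affine reflection with $\ell(sx\sigma(s)) \leq \ell(x)$, then there exists $v' \in \LP(sx\sigma(s))$ such that $(sx\sigma(s), v')$ is again a positive Coxeter pair with $\sigma$-support contained in $J$ (and analogously for $sx$ in the splitting case). In the finite Coxeter part setting of He--Nie--Yu, the analogous statement is easier because $\eta_\sigma$ is canonically defined, whereas here $v'$ cannot be produced by a naive formula from $v$ and must be chosen using the full flexibility of length positivity. I expect the proof of this stability lemma to require a case distinction depending on whether $s$ corresponds to a finite simple reflection or to the affine reflection, together with a careful analysis of how the defining inequalities of $\LP$ transform under conjugation by $s$. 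Once this stability lemma is secured, the rest of the theorem follows by tracking the reduction steps down to the base case, where the classical Deligne--Lusztig variety of Coxeter type provides the geometric input.
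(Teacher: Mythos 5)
Your overall strategy (Deligne--Lusztig reduction down to a minimal length element, with a stability lemma asserting that the positive Coxeter property persists along the reduction, and a base case handled by a classical Coxeter-type Deligne--Lusztig variety inside a parahoric) is the same as the paper's, and you correctly single out the stability lemma (Lemmas~\ref{lem:reduction1} and~\ref{lem:reduction2}) as the genuinely new input. However, there is a genuine gap: reaching an irreducible classical Coxeter variety at the end of \emph{each} reduction path is not enough to prove (b) and (c). A reduction tree of $x$ has many endpoints, and a priori several distinct paths could have endpoints lying in the same class $[b]$; then $X_x(b)$ would decompose into several pieces $X_{\underline p}$, possibly of different dimensions and each carrying its own $\mathbb J_b(F)$-orbits of components, so neither equidimensionality nor transitivity would follow. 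The crux of the paper's argument is precisely the ``multiplicity one'' statement that for each $[b]\in B(G)_x$ there is a \emph{unique} path in the tree with $[b]_{\underline p}=[b]$. This is proved by tracking the generic $\sigma$-conjugacy class along each edge: Lemma~\ref{lem:reduction2} shows that a type~I edge preserves $\lambda(b_{x,\max})$ while the type~II edge replaces it by $\lambda(b_{x,\max})-\alpha_i^\vee$ for an explicitly determined coroot (computed via quantum Bruhat graph weights and \cite[Lemma~7.7]{Lenart2015}), and an additional argument shows $\langle\nu(b),\alpha_i\rangle>0$, which forces, for any given $[b]$, exactly one of the two branches to be non-empty. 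Your proposal contains no mechanism playing this role, so the transitivity and equidimensionality claims do not yet follow from what you outline.

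Relatedly, your treatment of (a) is too optimistic: you assert that the interval property follows from ``standard monotonicity properties of $B(G)_x$ combined with Chai's structural results,'' but saturation of $B(G)_x$ is not a known general fact and is itself part of what must be proved here. In the paper it is a consequence of the explicit non-emptiness criterion $v^{-1}\mu-\wt(v\Rightarrow\sigma(wv))-\lambda(b)\in\sum_{\alpha\in J}\mathbb Z_{\geq0}\alpha^\vee$, which is established by the same inductive bookkeeping of $\lambda$-invariants along the reduction tree described above (together with Proposition~\ref{prop:finiteCoxGNP} and Proposition~\ref{prop:minNewton} for the extremal classes). So the missing idea in both places is the same: you need the quantitative control of $\lambda(b_{\bullet,\max})$ under type I/II edges, not just the qualitative stability of the positive Coxeter property. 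As a minor point, the case distinction you anticipate in the stability lemma (finite versus affine simple reflection) is not how the paper proceeds; the proof of Lemma~\ref{lem:reduction1} instead constructs the new length positive element by moving along a reduced word of $v^{-1}\sigma(wv)$ using the length functional, uniformly in $a\in\Delta_\af$.
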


The notion of elements of positive Coxeter type is much broader than He--Nie--Yu's notion of elements with finite Coxeter part. Firstly, elements of positive Coxeter type are stable under Deligne–Lusztig reduction, whereas elements of finite Coxeter type need not be. Second, we prove in Theorem \ref{thm:coxeterConjugacyClasses} that an element of minimal length in its $\s$-conjugacy class whose finite part is $\s$-conjugate to a partial $\s$-Coxeter element is of positive Coxeter type. But such an element, in most cases, does not have finite Coxeter part. This provides an important family of positive Coxeter type elements.

We remark that one may expect the iterated fibre bundle in part (c) of our above Theorem~\ref{thm:introGeometry} to be trivial. I.e.\ each irreducible component of $X_x(b)$ might be a \emph{direct product} of an irreducible component of a classical Deligne--Lusztig variety of Coxeter type with an appropriate number of $\mathbb A^1$ and $\mathbb G_m$. In particular, if $[b]$ is basic, then only $\mathbb A^1$--factors occur. We prove this latter result in \S\ref{sec:basic}. The more general statement has since been proved by Nie-Schremmer-Yu \cite{Nie2025}.

Our proof of Theorem \ref{thm:introGeometry} is more conceptual than the original proof by He--Nie--Yu, bringing several technical innovations and adding further insights. The key of our proof is that the set of length positive elements $\LP(x)$ and the positive Coxeter type property behave very well under Deligne-Lusztig reduction (cf. Lemma \ref{lem:reduction1} and Lemma \ref{lem:reduction2}). This allows us to use induction on $\ell(x)$, which makes the proof completely straightforward. 

\subsection{Applications}

One application of Theorem~\ref{thm:introGeometry} is the study of basic loci of Shimura varieties.
The intersection of an EKOR (Ekedahl--Kottwitz--Oort--Rapoport) stratum in a Shimura variety with its basic locus can be described in terms of $X_x(b)$ for a basic $b$, up to a finite morphism (cf. \cite{Goertz2019,}).
Basic loci of Coxeter type have been introduced and studied by G\"{o}rtz-He-Nie, cf.\ \cite{GH15, Goertz2019, GHN22}.
They described basic loci of Shimura varieties by exploiting the case where $X_x(b)$ decomposes as a union of Deligne–Lusztig varieties of Coxeter type.
The notion of positive Coxeter type can be used to generalize the cases of Coxeter type, cf.\ \cite{Shimada24, ST24} for the cases of $\mathrm{GL}_n$ and $\mathrm{GSp}_6$ respectively.
In many cases treated in \cite{Shimada24, ST24}, there exists an EO stratum of positive Coxeter type, but not of finite Coxeter type.
This suggests necessity of a generalization of the notion of finite Coxeter type.
For example, the second named author proves that the EO strata associated with $(\mathrm{GL}_n, (1,1,0,\ldots,0))$ and basic $b$ are of positive Coxeter type. In this case, all other strata except one are not of finite Coxeter type if $n\geq 4$.
Moreover, for the basic locus of the $\mathrm{GU}(2,n-2)$ Shimura variety at a prime inert, there exists an irreducible component that can be written as the closure of an EO stratum of positive Coxeter type (cf. \cite[Remark 3.6]{Shimada24b}); this refines the description given in \cite{FHI26}.

The notion of positive Coxeter type also plays a crucial role in \cite{SV25}. The first author together with Viehmann introduces the notion of \emph{depth} of a Shimura datum, with the Shimura varieties of depth $\leq 1$ being the famous class of fully Hodge-Newton decomposable ones \cite{Goertz2019}. In their study of Ekedahl-Oort strata in case of depth $<2$, many cases can only be handled by proving the positive Coxeter type property.

Another potential application of Theorem~\ref{thm:introGeometry} is the study of affine Lusztig varieties. These encode certain orbital integrals of conjugacy classes in the loop group, and are studied to obtain a theory of affine character sheaves \cite{He2023_affinelusztig}. Following He's correspondence between affine Lusztig varieties and affine Deligne--Lusztig varieties, we expect that affine Weyl group elements of positive Coxeter type may also have the most beautiful affine Lusztig varieties attached to them.

\subsection{A Converse to Theorem~\ref{thm:introGeometry}}
Our second main result provides a converse to Theorem~\ref{thm:introGeometry}. If one knows that the conclusion of Theorem~\ref{thm:introGeometry} holds for the minimal $\sigma$-conjugacy class $[b] = [\dot x]$, then one can conclude that $x$ of positive Coxeter type. Explicitly:

\begin{alphaproposition}[{Cf.\ Proposition~\ref{prop:pctCharacterization}}]\label{thm:introConverse}
Let $x = w\varepsilon^\mu$ and denote its Newton point by $\nu(\dot x)\in X_\ast(T)_{\Gamma_0}\otimes\mathbb Q$. Consider the closed subset $X_{\underline p}$ of $X_x(\dot x)$ obtained from the Deligne--Lusztig reduction by iteratively choosing closed subsets along a length decreasing sequence of cyclic shifts. Then $x$ has positive Coxeter type if and only if the following two conditions are both satisfied.
\begin{enumerate}[(i)]
\item The element $w\in W$ is $\sigma$-conjugate to a partial $\sigma$-Coxeter element in $W$.
\item There exists some $v\in \LP(x)$ with
\begin{align*}
\dim X_{\underline p} = \frac 12\left(\ell(x)+\ell(v^{-1}\sigma(wv)) - \langle \nu(\dot x),2\rho\rangle - \defect([\dot x])\right).
\end{align*}
\end{enumerate}
If both conditions are satisfied and $v$ is as in (ii), then $(x,v)$ is a positive Coxeter pair.
\end{alphaproposition}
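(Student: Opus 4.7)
The ``only if'' direction reduces immediately to Theorem~\ref{thm:introGeometry}. Setting $u := \sigma^{-1}(v)^{-1} w v$ and $g := \sigma^{-1}(v)$, the identity $g^{-1} w \sigma(g) = u$ exhibits $w$ as $\sigma$-conjugate in $W$ to the partial $\sigma$-Coxeter element $u$, which is (i). For (ii), Theorem~\ref{thm:introGeometry}(b) asserts the equi-dimensionality of $X_x(\dot x)$ with the stated dimension (using $\ell(u) = \ell(v^{-1}\sigma(wv))$, since $\sigma$ is length-preserving on $W$). Since $X_{\underline p}$ is produced from the reduction by iteratively selecting the closed $\BA^1$-bundle strata, an induction on the number of reduction steps combined with equi-dimensionality yields $\dim X_{\underline p} = \dim X_x(\dot x)$, giving (ii).

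For the ``if'' direction, assume (i) and (ii); the goal is to show that $u := \sigma^{-1}(v)^{-1} w v$ is a partial $\sigma$-Coxeter element. With $g = \sigma^{-1}(v)$, we have $u = g^{-1} w \sigma(g)$, so $u$ is $\sigma$-conjugate to $w$ and hence, by (i), to a partial $\sigma$-Coxeter element $u_0$ with $\sigma$-support $J$. Let $n$ denote the number of $\sigma$-orbits in $J$, so that $\ell(u_0) = n$. By the Geck--Pfeiffer and He--Nie theory of minimum length elements, $n$ is the minimum length in this $\sigma$-conjugacy class of $W$, and every element attaining this minimum is a partial $\sigma$-Coxeter element supported on $J$. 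It therefore suffices to prove $\ell(u) = n$.

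Next, compute $\dim X_{\underline p}$ directly from the Deligne--Lusztig reduction: each length-decreasing cyclic shift contributes $+1$ to the dimension (via the $\BA^1$-bundle closed stratum), and after $N = \tfrac{1}{2}(\ell(x) - \ell(x'))$ steps one reaches a minimum length element $x' \in \widetilde W$ in the $\sigma$-conjugacy class of $x$. By He's theorem on minimum length elements, $x'$ is $\sigma$-straight with $\ell(x') = \langle \nu(\dot x), 2\rho\rangle$, and $X_{x'}(\dot x) \cong X_{x'}(\dot x')$ is zero-dimensional (a discrete quotient of $\BJ_b(F)$). Hence $\dim X_{\underline p} = \tfrac{1}{2}(\ell(x) - \langle \nu(\dot x), 2\rho\rangle)$, and equating with the formula in (ii) yields $\ell(u) = \defect([\dot x])$.

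Finally, establish $\defect([\dot x]) = n$. The bound $\defect([\dot x]) \geq n$ is immediate from $\ell(u) \geq n$. For the reverse, $\sigma$-conjugate $x$ within $\widetilde W$ to a representative of the form $x'' = u_0 \varepsilon^{\mu''}$, and use the explicit minimum Newton point formula of Proposition~\ref{prop:minNewton} to observe that $\nu(\dot x'')$ pairs trivially with the simple roots outside $\supp_\sigma(u_0)$. This exhibits an $F$-split subtorus of $\BJ_b(F)$ of rank at least $\rk_F(G) - n$, giving $\defect([\dot x]) \leq n$. Combining both inequalities, $\ell(u) = n$, so $u$ is a partial $\sigma$-Coxeter element and $(x, v)$ is a positive Coxeter pair. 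The principal obstacle in this proof is the final defect computation: the lower bound is automatic, but the upper bound requires the explicit Newton point description of Proposition~\ref{prop:minNewton} to identify the $F$-split rank of $\BJ_b$ contributed by the simple reflections outside the $\sigma$-support $J$.
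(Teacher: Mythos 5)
Your \enquote{only if} direction is fine, and your reduction of the \enquote{if} direction to showing $\ell(u)=n$ (where $n=\ell_{R,\sigma}(w)$ is the minimal length in the $\sigma$-conjugacy class of $w$ in $W$) is the right framing. But the computation of $\dim X_{\underline p}$ that you feed into it is wrong. You assert that the reduction path terminates in a $\sigma$-straight element $x'$ with $\ell(x')=\langle\nu(\dot x),2\rho\rangle$ and $X_{x'}(\dot x')$ zero-dimensional. A minimal length element of a $\sigma$-conjugacy class of $\widetilde W$ need not be straight: in general $\ell(x')=\langle\nu(\dot x),2\rho\rangle+\sharp\bigl((J(b)-I_1(b))/\langle\sigma\rangle\bigr)$ and $\dim X_{x'}(\dot x')=\ell(x')-\langle\nu(\dot x),2\rho\rangle>0$ (cf.\ Theorem~\ref{thm:fctConsequences}~(d) and the base case of its proof). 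Concretely, for split $\SL_2$ and $x=s_\alpha\in W\subset\widetilde W$, your formula gives $\dim X_{\underline p}=\tfrac12(1-0)=\tfrac12$, while the true value is $1$; moreover your final claim $\defect([\dot x])=n$ fails there ($n=1$ but $[\dot x]=[1]$ has defect $0$), so the chain $\ell(u)=\defect([\dot x])=n$ collapses.

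The missing ingredient is that condition (i) must be used \emph{before} computing the dimension of the endpoint. The paper's route: by (i) and Theorem~\ref{thm:coxeterConjugacyClasses}, the endpoint $e$ of $\underline p$ is itself of positive Coxeter type with support $J$ satisfying $\sharp(J/\sigma)=\ell_{R,\sigma}(w)$; Theorem~\ref{thm:fctConsequences}~(b) applied to $e$ then gives $\dim X_e(\dot x)=\tfrac12\bigl(\ell(e)+\ell_{R,\sigma}(w)-\langle\nu(\dot x),2\rho\rangle-\defect([\dot x])\bigr)$, hence
\begin{align*}
\dim X_{\underline p}=\tfrac12\bigl(\ell(x)-\ell(e)\bigr)+\dim X_e(\dot x)=\tfrac12\bigl(\ell(x)+\ell_{R,\sigma}(w)-\langle\nu(\dot x),2\rho\rangle-\defect([\dot x])\bigr).
\end{align*}
Comparing with (ii) shows that (ii) holds for $v$ exactly when $\ell(v^{-1}\sigma(wv))=\ell_{R,\sigma}(w)$, which by Lemma~\ref{lem:reflectionLength} is the partial $\sigma$-Coxeter condition. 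No identification of $\ell_{R,\sigma}(w)$ with $\defect([\dot x])$ is needed (and none is available); your proposed defect computation via Proposition~\ref{prop:minNewton} cannot be repaired because the statement it aims at is false.
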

The paper is organized as follows. In \S\ref{sec:preliminary}, we introduce basic notions such as affine Deligne–Lusztig varieties and Deligne–Lusztig reduction, which is crucial for reducing many arguments to smaller cases via induction on the length of $x$. In \S\ref{sec:3}, we give a complete description of the minimal and the maximal elements of $B(G)_x$ (cf. Proposition \ref{prop:minNewton} and Proposition \ref{prop:finiteCoxGNP}), using length positive elements. We also use the weight function of the quantum Bruhat graph to explicitly describe the maximal element of $B(G)_x$. In the context of our theorem, this function is very straightforward to compute, cf.\ \cite[Proof of Theorem~1.2~(b)]{Milicevic2020} and Remark~\ref{rem:qbgForCoxeter} below. This explicit description of the generic Newton point, even for elements with finite Coxeter parts, is not contained in the original article of He--Nie--Yu. In \S\ref{sec:reduction} and \S\ref{sec:main}, we prove Theorem \ref{thm:introGeometry} using the description in \S\ref{sec:3}.
The remainder of the paper is devoted to more detailed studies of $X_x(b)$.
In \S\ref{sec:veryspecial}, we give a more precise account of the geometry of $X_x(b)$ using the notion of very special parahoric subgroups (cf. Corollary \ref{cor:geo}).
In \S\ref{sec:minimallengthcase}, we prove a characterization of elements of positive Coxeter type in the minimal-length case, and in \S\ref{sec:converse} we prove Proposition~\ref{thm:introConverse}.
We show in \S\ref{sec:basic} that the iterated fiber bundle in Theorem~\ref{thm:introGeometry} is trivial in the basic case.

\subsection*{Acknowledgements}
FS is partially supported by the New Cornerstone Foundation through the New Cornerstone Investigator grant, and by Hong Kong RGC grant 14300122, both awarded to Prof.\ Xuhua He.
RS was supported by the WINGS-FMSP program at the Graduate School of Mathematical Science, the University of Tokyo. RS was also supported by JSPS KAKENHI Grant number JP21J22427. QY is partially supported by the National Natural Science Foundation of China (grant no. 12501018). We thank Michael Rapoport for helpful comments on a preliminary version of this article.
\section{Preliminary}\label{sec:preliminary}
\subsection{Affine Weyl group}
We fix a non-archimedian local field $F$ whose completion of the maximal unramified extension will be denoted $\breve F$. We write $\mathcal O_F$ and $\mathcal O_{\breve F}$ for the respective rings of integers. Let $\mathbf k$ be the residue field of $\breve F$. Pick a uniformizer $\varepsilon \in F$. The Galois group $\Gal(\breve F/F)$ is generated by the Frobenius $\sigma$.

We consider a quasi-split connected reductive group $G$ over $F$. The assumption of $G$ being quasi-split is only made to lighten the notational burden, and everything developed can be immediately generalized to arbitrary connected and reductive groups via \cite[Section~2]{Goertz2015}.

We construct its associated affine root system and affine Weyl group following Haines-Rapoport \cite{Haines2008} and Tits \cite{Tits1979}.

Fix a maximal $\breve F$-split torus $S\subseteq G_{\breve F}$ and  write $T$ for its centralizer in $G_{\breve F}$, so $T$ is a maximal torus of $G_{\breve F}$. Write $\mathcal A = \mathcal A(G_{\breve F},S)$ for the apartment of the Bruhat-Tits building of $G_{\breve F}$ associated with $S$. We pick a $\sigma$-invariant alcove $\mathfrak a$ in $\mathcal A$. This yields a $\sigma$-stable Iwahori subgroup $\CI\subset G(\breve F)$.

Denote the normalizer of $T$ in $G$ by $N(T)$. Then the quotient
\begin{align*}
\widetilde W = N_G(T)(\breve F) / (T(\breve F)\cap \CI)
\end{align*}
is called \emph{extended affine Weyl group}, and $W = N_G(T)(\breve F)/T(\breve F)$ is the \emph{(finite) Weyl group}. The Weyl group $W$ is naturally a quotient of $\widetilde W$. We write $\cl : \widetilde W\to W$ for the natural projection map.

The affine roots as constructed in \cite[Section~1.6]{Tits1979} are denoted $\Phi_\af$. Each of these roots $a\in \Phi_\af$ defines an affine function $a:\mathcal A\rightarrow\mathbb R$. The vector part of this function is denoted $\cl(a) \in V^\ast$, where $V = X_\ast(T)_{\Gamma_0}\otimes \mathbb R$. Here, $\Gamma_0 = \Gal(\overline F/\breve F)$ is the absolute Galois group of $\breve F$, i.e.\ the inertia group of $\Gamma = \Gal(\overline F/F)$. The set of \emph{(finite) roots} is the root system\footnote{This is different from the root system that \cite{Tits1979} and \cite{Haines2008} denote by $\Phi$; it coincides with the reduced root system called $\Sigma$ in \cite{Haines2008}.  } $\Phi := \cl(\Phi_\af)$.

The affine roots in $\Phi_\af$ whose associated hyperplane is adjacent to our fixed alcove $\mathfrak a$ and which send the points of $\mathfrak a$ to non-negative real numbers are called \emph{simple affine roots} and denoted $\Delta_\af\subseteq \Phi_\af$.

Writing $W_\af$ for the extended affine Weyl group of the simply connected quotient of $G$, we get a natural $\sigma$-equivariant short exact sequence (cf.\ \cite[Lemma~14]{Haines2008})
\begin{align*}
1\rightarrow W_\af\rightarrow\widetilde W\rightarrow \pi_1(G)_{\Gamma_0}\rightarrow 1.
\end{align*}
Here, $\pi_1(G) := X_\ast(T)/\mathbb Z\Phi^\vee$ denotes the Borovoi fundamental group.

For each $x\in \widetilde W$, we denote by $\ell(x)\in \mathbb Z_{\geq 0}$ the length of a shortest alcove gallery from $\mathfrak a$ to $x\mathfrak a$. The elements of length zero are denoted $\Omega$. The above short exact sequence yields an isomorphism of $\Omega$ with $\pi_1(G)_{\Gamma_0}$, realizing $\widetilde W$ as semidirect product $\widetilde W = \Omega\ltimes W_\af$.

Each affine root $a\in \Phi_\af$ defines an affine reflection $r_a$ on $\mathcal A$. The group generated by these reflections is naturally isomorphic to $W_\af$ (cf.\ \cite{Haines2008}), so by abuse of notation, we also write $r_a\in W_\af$ for the corresponding element. We define $\BS_\af := \{r_a\mid a\in \Delta_\af\}$, called the set of \emph{simple affine reflections}. The pair $(W_\af, \BS_\af)$ is a Coxeter group with length function $\ell$ as defined above. 

We pick a special vertex $\mathfrak x\in \mathcal A$ that is adjacent to $\mathfrak a$. Since we assumed $G$ to be quasi-split, we may and do choose $\mathfrak x$ to be $\sigma$-invariant. We identify $\mathcal A$ with $V$ via $\mathfrak x\mapsto 0$. This allows us to decompose $\Phi_\af = \Phi\times\mathbb Z$, where $a = (\alpha,k)$ corresponds to the function
\begin{align*}
V\rightarrow \mathbb R, v\mapsto \alpha(v)+k.
\end{align*}
From \cite[Proposition~13]{Haines2008}, we moreover get decompositions $\widetilde W = W\ltimes X_\ast(T)_{\Gamma_0}$ and $W_\af = W\ltimes \mathbb Z\Phi^\vee$. Using this decomposition, we write elements $x\in \widetilde W$ as $x = w\varepsilon^\mu$ with $w\in W$ and $\mu\in X_\ast(T)_{\Gamma_0}$. For $a = (\alpha,k)\in \Phi_\af$, we have $r_a = s_\alpha \varepsilon^{k\alpha^\vee}\in W_\af$, where $s_\alpha\in W$ is the reflection associated with $\alpha$. The natural action of $\widetilde W$ on $\Phi_\af$ can be expressed as
\begin{align*}
(w\varepsilon^\mu)(\alpha,k) = (w\alpha,k-\langle\mu,\alpha\rangle).
\end{align*}
We define the dominant chamber of $V$ to be the Weyl chamber containing our fixed alcove $\mathfrak a$. 
This gives a Borel subgroup $B\subseteq G$, and corresponding sets of positive/negative/simple roots $\Phi^+, \Phi^-, \Delta\subseteq \Phi$. The set of \emph{(finite) simple reflections} is $\BS :=\{s_\a\mid \a\in \Delta\}\subset W$.

By abuse of notation, we denote by $\Phi^+$ also the indicator function of the set of positive roots, i.e.
\begin{align*}
\Phi^+:\Phi\rightarrow\{0,1\},\quad \alpha\mapsto\begin{cases}1,&\alpha\in \Phi^+,\\
0,&\alpha\in \Phi^-.\end{cases}
\end{align*}
The sets of positive and negative affine roots can be defined as
\begin{align*}
\Phi_\af^+:=&(\Phi^+\times \mathbb Z_{\geq 0})\sqcup (\Phi^-\times \mathbb Z_{\geq 1}) = \{(\alpha,k)\in \Phi_\af\mid k\geq \Phi^+(-\alpha)\},
\\\Phi_\af^- :=&-\Phi_\af^+ = \Phi_\af\setminus \Phi_\af^+= \{(\alpha,k)\in \Phi_\af\mid k< \Phi^+(-\alpha)\}.
\end{align*}
One checks that $\Phi_\af^+$ consists of precisely those affine roots which are sums of simple affine roots.

Decompose $\Phi$ as a direct sum of irreducible root systems, $\Phi = \Phi_1\sqcup\cdots\sqcup \Phi_r$. Each irreducible factor contains a uniquely determined longest root $\theta_i\in \Phi_i^+$. Now the set of simple affine roots is
\begin{align*}
\Delta_\af = \{(\alpha,0)\mid \alpha\in \Delta\}\sqcup\{(-\theta_i,1)\mid i=1,\dotsc,r\}\subset \Phi_\af^+.
\end{align*}

We call an element $\mu\in X_\ast(T)_{\Gamma_0}\otimes \mathbb Q$ \emph{dominant} if $\langle \mu,\alpha\rangle\geq 0$ for all $\alpha\in \Phi^+$. 

For elements $\mu,\mu'$ in $X_\ast(T)_{\Gamma_0}\otimes \mathbb Q$ (resp.\ $X_\ast(T)_{\Gamma_0}$ or $X_\ast(T)_{\Gamma}$), we write $\mu\leq \mu'$ if the difference $\mu'-\mu$ is a $\mathbb Q_{\geq 0}$-linear (resp.\ $\mathbb Z_{\geq 0}$-linear) combination of positive coroots.

The support $\supp(w)\subseteq \Delta$ of an element $w\in W$ is defined to be the set of those simple roots $\alpha$ such that the corresponding simple reflection $s_\alpha$ occurs in some (or equivalently any) reduced expression of $w$. The $\s$-support of $w$ is defined as
\begin{align*}
\supp_\s(w) = \bigcup_{i\in \BZ} \s^i(\supp(w)).
\end{align*}

\subsection{$\s$-conjugacy classes}\label{sec:1.2}
Two elements $g,g'\in G(\breve F)$ are called \emph{$\s$-conjugate}, if $g' = h^{-1}g\s(h)$ for some $h\in G(\breve F)$. For $b \in G(\breve F)$, we denote $[b]=\{h^{-1}b\s(h)\mid  h\in G(\breve F)\}$ and $B(G)=\{[b]\mid b\in G(\breve F)\}$.

We recall the well known Kottwitz's map
 $$\k : B(G)\rightarrow \pi_1(G)_{\G},$$
 and the Newton map
 $$\nu : B(G)\rightarrow X_*(T)_{\G_0} \otimes \BQ.$$
 
Kottwitz proved that a $\s$-conjugacy class $[b]$ is completely determined by $(\k(b),\nu(b))$ \cite{Kottwitz1985, Kottwitz1997}. Let $\le$ denote the partial order on $B(G)$. Note that $[b]\le [b']$ if and only if $\k(b) = \k(b') $ and $\nu({b}) \le\nu({b'})$.

For any $\nu\in X_*(T)_{\G_0}\otimes\BR$, denote $I(\nu) = \{ \a\in\Delta\mid \<\nu,\a\>=0   \}$.


Consider  a $\s$-stable subset  $J  $ of $\BS$. Let $\tW_J=W_J\ltimes X_*(T)_{\G_0} $ be the Iwahori--Weyl group of the standard Levi subgroup $M_J$ of $G$. We write ${}^{J}\tW$ for the set of minimal length representatives for cosets in $W_J\backslash \tW$. By $  J_\af \supseteq J$ we denote the set of simple affine roots for $\tW_J$.

\subsection{Affine Deligne-Lusztig varieties}\label{sec:1.3}
Let $x\in \tW$ and $[b]\in B( G)$. The affine Deligne-Lusztig variety $X_x(b)$ is defined as
\begin{align*}
X_x(b) = \{g\CI  \in G(\breve F)/\CI\mid g^{-1}b \sigma(g) \in \CI x \CI\}.
\end{align*}
Set $B( G)_x = \{[b]\in B( G)\mid X_x(b)\ne \emptyset\}$. 
Note that $X_x(b)$ is a locally closed subscheme of the affine flag variety $G(\breve F)/\CI$ (which itself is only an ind-scheme). Note also that $X_x(b)$ is finite dimensional $\mathbf k$-scheme, locally of finite type. Let $\BJ_b(F) = \{ g\in G(\breve F)\mid g^{-1}b\s(g) = b\}$, the $\s$-centralizer of $b$. It is easy to see that $\BJ_b(F)$ acts on $X_x(b)$ via left multiplication.

Let $x\in \tW$ and $s\in \BS_\af$. If $\ell(sx\s(s))\le \ell(x)$, we write $x\xrightarrow{s }_\s sx\s(s)$. We write $x\rightarrow_\s x'$ for some $x'\in \tW$ if there exist $x_0=x,x_1,x_2,\ldots,x_r=x'$ and $s_1,s_2,\ldots,s_r$ such that $x_0\xrightarrow{s_1 }_\s x_1 \xrightarrow{s_2 }_\s x_2\xrightarrow{s }_\s\cdots \xrightarrow{s_r }_\s x_r$. We write $x\approx_\s x'$ if $x\rightarrow_\s x'$ and $x'\rightarrow_\s x$. 
For any $\tilde \CO\in B(\tW)_\s$, denote by $\tilde \CO_{\min}$ the subset of elements of minimal length.

He and Nie prove in \cite[Theorem 2.10]{He2014b} that for any element $x\in \tW$, there exists some element $x'$ which is of minimal length in its $\s$-conjugacy class, such that $x\rightarrow_\s x'$. He proves in \cite[Theorem 3.5]{He2014} that if $x$ is of minimal length in its $\s$-conjugacy class, then $\CI x \CI $ is contained in a unique $\s$-conjugacy class $[b] \in B(G)$, in other words, the set $B(G)_x$ contains only one element. For such minimal length elements, He gives an explicit description for the corresponding affine Deligne-Lusztig varieties \cite[Theorem 4.8]{He2014}.

\subsection{Deligne-Lusztig reduction }\label{sec:1.4}\label{sec:dlreduction}

In this subsection, we recall the Deligne-Lusztig reduction method established in \cite[Corollary 2.5.3]{Goertz2010b} and the notion of reduction tree of elements of $\widetilde W$ introduced in \cite[\S3.4]{He2022}.

\begin{proposition}\label{DL-reduction}
Let $x\in \widetilde W$ and let $a\in \Delta_\af$.
If $\mathrm{ch}(F)>0$, then the following two statements hold for any $b\in G(\breve F)$.
\begin{enumerate}[(1)]
\item If $\ell(r_a x r_{\sigma a})=\ell(x)$, then there exists a $\mathbb J_b(F)$-equivariant universal homeomorphism $X_x(b)\rightarrow X_{r_a x r_{\sigma a}}(b)$.
\item If $\ell(r_a x r_{\sigma a})=\ell(x)-2$, then there exists a decomposition $X_x(b)=X_1\sqcup X_2$ such that
\begin{itemize}
\item $X_1$ is open and there exists a $\mathbb J_b(F)$-equivariant morphism $X_1\rightarrow X_{r_a x}(b)$, which is  the composition of a Zariski-locally trivial $\mathbb G_m$-bundle and a universal homeomorphism. 
\item $X_2$ is closed and there exists a $\mathbb J_b(F)$-equivariant morphism $X_2\rightarrow X_{r_a x r_{\sigma a}}(b)$, which is the composition of a Zariski-locally trivial $\mathbb A^1$-bundle and a universal homeomorphism. 
\end{itemize}
\end{enumerate}
If $\mathrm{ch}(F)=0$, then the above statements still hold by replacing $\mathbb A^1$ and $\mathbb G_m$ by $\mathbb A^{1,\mathrm{pfn}}$ and $\mathbb G_m^{\mathrm{pfn}}$ respectively.
\end{proposition}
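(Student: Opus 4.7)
The plan is to exploit the parahoric $\mathcal{P}_a \supset \mathcal{I}$ generated by $\mathcal{I}$ and a lift $\dot r_a$, which gives a Zariski-locally trivial $\mathbb{P}^1$-fibration (resp.\ $\mathbb{P}^{1,\mathrm{pfn}}$-fibration in characteristic $0$) $\pi_a : G(\breve F)/\mathcal{I} \to G(\breve F)/\mathcal{P}_a$. Each fiber decomposes as a disjoint union of the distinguished point $g\mathcal{I}$ and an $\mathbb{A}^1$ (or its perfection) corresponding to $g\dot r_a (\mathcal{I}/(\mathcal{I}\cap \dot r_a\mathcal{I}\dot r_a))$. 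Since $\sigma(\mathcal{P}_a) = \mathcal{P}_{\sigma a}$, the key local calculation is: for $g\mathcal{I} \in G(\breve F)/\mathcal{I}$ and $h := g^{-1}b\sigma(g)$, as $p\mathcal{I}$ varies over $\pi_a^{-1}(g\mathcal{P}_a)$ the element $(gp)^{-1}b\sigma(gp) = p^{-1}h\sigma(p)$ ranges in $\mathcal{P}_a h \mathcal{P}_{\sigma a}$, which decomposes by Bruhat into a union of Iwahori double cosets $\mathcal{I}\dot y\mathcal{I}$ with $y$ in the finite set $\{x, r_a x, x r_{\sigma a}, r_a x r_{\sigma a}\}$. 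The distribution of points of the fiber among these cosets is then controlled by the standard length rule for multiplying Iwahori double cosets.

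Next, one treats the two cases. When $\ell(r_a x r_{\sigma a}) = \ell(x)$, one is in the braid-like situation where exactly one of $\ell(r_ax), \ell(xr_{\sigma a})$ is $\ell(x)+1$ and the other is $\ell(x)-1$; a direct check then shows that each fiber of $\pi_a$ meeting $X_x(b)$ contains exactly one point of $X_x(b)$ and exactly one point of $X_{r_axr_{\sigma a}}(b)$, and the resulting bijection $g\mathcal{I}\mapsto g\dot r_a \mathcal{I}$ (up to correction on the appropriate side) is a $\mathbb{J}_b(F)$-equivariant universal homeomorphism. When $\ell(r_axr_{\sigma a}) = \ell(x)-2$ both $r_ax$ and $xr_{\sigma a}$ have length $\ell(x)-1$, and the fiber-wise picture becomes: each $\pi_a$-fiber over $\pi_a(X_x(b))$ contributes either a single point of $X_{r_axr_{\sigma a}}(b)$ together with a $\mathbb{G}_m$ inside $X_{r_ax}(b) \cup X_x(b)$, or an $\mathbb{A}^1$ lying over $X_{r_axr_{\sigma a}}(b)$. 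I would define $X_1\subseteq X_x(b)$ to be the open locus on which the ambient fiber contains a point of $X_{r_ax}(b)$ (yielding the $\mathbb{G}_m$-bundle structure) and $X_2$ its closed complement (yielding the $\mathbb{A}^1$-bundle structure over $X_{r_axr_{\sigma a}}(b)$). Zariski local triviality descends from that of $\pi_a$, and $\mathbb{J}_b(F)$-equivariance is automatic because all constructions proceed by right multiplication by elements of $\mathcal{P}_a$.

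The main technical obstacle is the scheme-theoretic bookkeeping in the fiber analysis: one must verify, using the affine root subgroup decomposition of $\mathcal{P}_a$ and the Chevalley commutation relations, that as $p$ ranges over the open and closed strata of $\mathcal{P}_a/\mathcal{I}$ the element $p^{-1}h\sigma(p)$ lies in the predicted Iwahori double coset, and that the resulting morphisms are genuine Zariski-locally trivial bundles rather than merely bijections on $\mathbf{k}$-points. In mixed characteristic, the affine flag variety is realized only as a perfect ind-scheme, so affine lines and multiplicative groups are replaced by their perfections; apart from this standard adaptation, the whole argument is characteristic-insensitive.
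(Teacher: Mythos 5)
This proposition is not proved in the paper at all: it is recalled verbatim from \cite[Corollary~2.5.3]{Goertz2010b}, so there is no internal proof to compare yours against. Your sketch is a reconstruction of the standard Deligne--Lusztig fibration argument used in that reference: push down along the $\mathbb P^1$-fibration $\pi_a\colon G(\breve F)/\mathcal I\to G(\breve F)/\mathcal P_a$, use $\sigma(\mathcal P_a)=\mathcal P_{\sigma a}$, and control how the relative position $y$ of a point $g\mathcal I$ (meaning $g^{-1}b\sigma(g)\in\mathcal I\dot y\mathcal I$) distributes over each fibre via the Iwahori double coset rules $\mathcal I\dot r_a\mathcal I\cdot\mathcal I\dot y\mathcal I=\mathcal I\dot r_a\dot y\mathcal I$ when lengths add and $=\mathcal I\dot y\mathcal I\cup\mathcal I\dot r_a\dot y\mathcal I$ when they drop. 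This is the right strategy, your definitions of $X_1$ and $X_2$ are the standard ones, and you correctly isolate where the genuine work sits (the root-subgroup computation upgrading pointwise statements to Zariski-locally trivial bundles, and perfections in mixed characteristic). Two small points in case (1): you should dispose of $r_axr_{\sigma a}=x$ separately (the map is the identity), and the "bijection" is $g\mathcal I\mapsto gp\mathcal I$ for a $p\in\mathcal I\dot r_a\mathcal I$ depending on $g$ through the factorization $g^{-1}b\sigma(g)=pq$, not literally $g\mapsto g\dot r_a$ --- you acknowledge this, but it is the well-definedness of $gp\mathcal I$ that needs checking.

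Your fibre picture in case (2) is misstated, and correcting it is exactly what distinguishes the $\mathbb G_m$-bundle from the $\mathbb A^1$-bundle. The two types of fibre meeting $X_x(b)$ are: (A) the fibre contains one point of relative position $r_axr_{\sigma a}$, and then the coset rules force \emph{all} remaining points (an $\mathbb A^1$) to have relative position $x$ --- these fibres assemble into $X_2$; (B) the fibre contains no point of relative position $r_axr_{\sigma a}$, exactly one point of relative position $r_ax$, exactly one of relative position $xr_{\sigma a}$, and the complementary $\mathbb G_m$ lies in $X_x(b)$ --- these assemble into $X_1$. Your first alternative (``a single point of $X_{r_axr_{\sigma a}}(b)$ together with a $\mathbb G_m$ inside $X_{r_ax}(b)\cup X_x(b)$'') cannot occur, since any fibre through a point of $X_{r_axr_{\sigma a}}(b)$ has all its other points in $X_x(b)$; moreover the element $xr_{\sigma a}$, which is absent from your final picture, is precisely the relative position of the one point you must delete from the affine line of neighbours of the $X_{r_ax}(b)$-point in order to land in $\mathbb G_m$ rather than $\mathbb A^1$. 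That there is \emph{exactly} one such point in every type-(B) fibre is the content of the Chevalley-relation computation you defer; with that supplied, the argument closes.
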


In case (2) of the Proposition, we call the reduction $x\rightharpoonup r_ax$ type I reduction and call $x\rightharpoonup r_ax r_{\sigma(a)}$ type II reduction.

A reduction tree of $x$ is a tree whose vertices are elements of $\widetilde W$ and edges are of the form $y\rightharpoonup z$, such that there are some $y'$ with $y' \approx_{\sigma} y$ and a type I or type II reduction $y'\rightharpoonup z$. We call such an edge type I edge or type II edge if $y'\rightharpoonup z$ is type I or type II reduction.

A reduction tree of $x$ is constructed inductively as follows. If $x$ is of minimal length in its $\s$-conjugacy class, then the tree consists of a single vertex $x$. if $x$ is not of minimal length, we can find some $x'$ with $x'\approx_{\sigma}x$ and some simple affine reflection $r_a$ with $\ell(r_ax'r_{\sigma a})<\ell(x')$. Then the reduction tree of $x$ consists of the reduction tree of $r_ax'$, the reduction tree of $r_ax'r_{\sigma a}$, the edge $x\rightharpoonup r_ax'$ and the edge $x\rightharpoonup r_ax'r_{\sigma a}$. 

Note that reduction trees of $x$ are far from unique. In fact, the number of reduction trees of an element can be very large.

Let $\mathcal{T}$ be a reduction tree of $x$. An \emph{endpoint} or a \emph{leaf} of $\mathcal{T}$ is a vertex $y$ such that there is no edge starting from $y$. For a (complete) path $\underline p: x=x_0\rightharpoonup x_1\rightharpoonup\cdots\rightharpoonup x_n$, where $x_n$ is an endpoint of $\mathcal{T}$, denote $  \text{end}(\underline p)=x_n$. Let $\ell_I(\underline p)$ and $\ell_{II}(\underline p)$ be the numbers of type I edges and type II edges of $\underline p$. Denote by $[b]_{\underline p}$ the $\sigma$-conjugacy class of $G(\breve F)$ containing to $\mathcal I\text{end}(\underline p)\mathcal I$. 

We have, for any $[b]$ and $\mathcal{O}$,

$$X_x(b) = \bigsqcup_{\underline p:\text{path of } \mathcal{T}; [b]_{\underline p } = [b]}X_{\underline p }$$
where $X_{\underline p}$ is the variety given by Proposition \ref{DL-reduction}.

For any $a_1, a_2 \in \BN$, we say that a scheme $X$ is an \textit{iterated fibration} of type $(a_1, a_2)$ over a scheme $Y$ if there exist morphisms $$X=Y_0 \to Y_1 \to \cdots \to Y_{a_1+a_2}=Y$$ such that for any $i$ with $0 \le i<a_1+a_2$, $Y_i$ is a Zariski-locally trivial $\BA^{1, (\mathrm{perf})}$-bundle or $\BG^{(\mathrm{perf})}_m$-bundle over $Y_{i+1}$, and there are exactly $a_1$ locally trivial $\BG^{(\mathrm{perf})}_m$-bundles in the sequence. In this case, there are exactly $a_2$ locally trivial $\BA^{1, (\mathrm{perf})}$-bundles in the sequence. Then the variety $ X_{\underline p }$ is $\BJ_b(F)$-equivalently universally homeomorphic to an iterated fibration of type $(\ell_I(\underline p),\ell_{II}(\underline p) )$ over $X_{\mathrm{end}(\underline p)}(b)$. Moreover
$$\dim X_{\underline p} = \ell_I(\underline p) + \ell_{II}(\underline p) + \ell(\mathrm{end}(\underline p))-\<\nu(b),2\rho\>.$$
Here, $2\rho\in \mathbb Z\Phi$ is the sum of all positive roots.

\section{$P$-alcove elements and length positivity}\label{sec:Palcove}
\begin{definition}[{\cite[Definition 2.5]{Schremmer2022_newton}}]
Let $x=w\varepsilon^{\mu}\in\tW$ and $\a\in\Phi^+$. We define the length functional of $x$ as
$$\ell(x,\a) = -\Phi^+(w\a) + \<\mu,\a\> + \Phi^+(\a)\in\mathbb Z.$$
\end{definition}
The length functional $\ell(x,\a)$ counts, in a signed way, the number of positive affine roots $(\a,k)\in \Phi_\af^+$ whose image $x(\a,k)\in\Phi_\af$ is a negative affine root.

\begin{definition}[{\cite[\S3.3]{Goertz2015}}]\label{def:Palcove}
Let $J$ be a $\s$-stable subset of $\BS$ and $u\in W$. We say an element $x\in \tW$ is a $(J,u,\s)$-alcove element, if the following conitions are both satisfied:
\begin{enumerate}[(a)]
    \item The element $u^{-1}x\s(u)$ lies in $\tW_J$, and
    \item we have $\ell(x,w^{-1}u\b) \ge 0 $ for all\footnote{The translation of the original formulation to the length functional is straightforward but not immediate, partly due to the different choices of Borel subgroups in \cite{Goertz2015} and our paper.} $\b\in\Phi^+ \setminus \Phi_J$.
\end{enumerate}
We say that $x$ is a \emph{normalized} $(J,u,\s)$-alcove element if $u\in W^J$ \cite[\S4]{Viehmann2021}.
\end{definition}

Note that $x$ is a $(J,u,\s)$-alcove element if and only if it is a $(J,uu',\s)$-alcove element for any $u'\in W_J$. 

\begin{definition}[{\cite[Definition 2.7]{Schremmer2022_newton}}]
Let $v\in W$ and $x\in \tW$. We say that $v$ is a length positive element of $x$, if $\ell(x,v\a)\ge0$ for all $\a\in\Phi^+$. Denote by $\LP(x)$ the set of all length positive element for $x$.

\end{definition}
We can see from the definition that the length positivity condition is a refinement of the second condition in the definition of $(J,u,\s)$-alcove element. In view of \cite[Lemma~2.3]{Schremmer2022_newton}, we conclude that $x$ is a $(J,u,\delta)$-alcove element if and only if there exists some $v\in \LP(x)$ with $v\in w^{-1} u W_J$ and $v^{-1}\sigma(wv)\in W_J$.

For any $x=w\varepsilon^{\mu}$, the set $\LP(x)$ is non-empty. Indeed, $\LP(x)$ contains the minimal element $v\in W$ such that $v^{-1}\mu$ is dominant. The element $v$ and the set $J$ in the definition are not unique. Hence we call $J$ the support of the pair $(x,v)$ instead of $x$. Nevertheless, the cardinality of $J$ is uniquely determined by $x$. See \S\ref{sec:support}.

In the case where $x=w\varepsilon^{\mu}$ is 2-regular, i.e., $\left\vert \<\mu,\a\> \right\vert \ge2$ for any $\a\in\Phi^+$, then by \cite[Proposition~2.15]{Schremmer2022_newton}, the set $\LP(x)$ consists only of one element.

\begin{definition}
 Let $x=w\varepsilon^{\mu}\in \tW$. We say $x$ is a positive Coxeter type element, or $x$ is of positive Coxeter type, if there is $v\in \LP(x)$ and a $\s$-stable subset $J\subset \BS$ such that $v^{-1}\s(wv)$ is a $\s$-Coxeter element in $W_J$. In this case, we say $(x,v)$ is a positive Coxeter pair with support $J$.
\end{definition}

It is known that if $x\in\widetilde W$ has length zero and corresponds to a superbasic $\sigma$-conjugacy class, then $x$ is of positive Coxeter type \cite[Lemma~3.10]{Schremmer2022_newton}. We shall prove that all minimal length element in a partial $\s$-Coxeter $\sigma$-conjugacy class are of positive Coxeter type. For example if $G = \GL_n$, then every element that has minimal length in its $\sigma$-conjugacy class is of positive Coxeter type. See \S~\ref{sec:minimallengthcase}.

\begin{remark}\label{rmk:HNY}
Let $x=w\varepsilon^{\mu}$ and $v$ be the minimal element such that $v^{-1}(\mu)$ is dominant. Suppose $v^{-1}w\s(v)$ is a partial $\s$-Coxeter element. He, Nie and Yu \cite{He2022} refer to these kind of elements as \enquote{elements with finite Coxeter part}. Although this $v$ is also used in the definition of the virtual dimension in \S\ref{sec:converse}, it exhibits some undesired behaviour on certain regions of the Bruhat--Tits buildings known as the \emph{critical strips}. For an example of such pathologies, note that the properties of affine Deligne--Lusztig varieties are invariant under automorphisms of the affine Dynkin diagram of $\widetilde W$, or by replacing $(x,[b],\sigma)$ by $(x^{-1}, [b^{-1}], \sigma^{-1})$. However, both the virtual dimension and the notion of finite Coxeter part are not stable under these procedures, whereas the notion of positive Coxeter type introduced in this article is. For a concrete example, one may study the group $G = \GL_3$, and observe that two of the three simple affine reflections in $\widetilde W$ have finite Coxeter part, whereas the third does not. 

 In order to remedy these pathologies, the first named author has introduced the notion of \emph{length positivity} \cite{Schremmer2022_newton}. This associates to every $x\in\widetilde W$ a natural subset $\LP(x)\subseteq W$, called the set of length positive elements of $x$, such that the element $v\in W$ constructed above is always an element of $\LP(x)$.
\end{remark}


Recall that for any $x\in \tW$, we define $B(G)_x=\{[b]\in B(G)\mid X_x(b)\ne\emptyset\}$. 

The following important property for (normalized) $(J,u,\s)$-alcove elements is first proved in \cite[\S8]{Goertz2010} for split groups, see also \cite[Theorem 2]{Sch23} for the general statement and a different proof.
\begin{theorem}\label{thm:Palcove}
Let x be a normalized $(J,u,\s)$-alcove element. Let $\tilde x = u^{-1}x\s(u)\in \tW_J$. Write $M=M_J$. Then the restriction of the natural map $B(M )\rightarrow B(G) $ gives a bijection $B(M )_{\tilde{x}}\rightarrow B(G)_x $. Moreover, the $M $-dominant Newton point $\nu_{M }(b)$ equals the $G $-dominant Newton point $\nu(b)$ for any $[b]\in B(M)_{\tilde x}$.
\end{theorem}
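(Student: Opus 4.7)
The plan is to reduce $B(G)_x$ to the Levi subgroup $M = M_J$ by exploiting the Iwahori factorization of $\CI$ relative to the parabolic $P = P_J = M \ltimes U_P$, namely
\begin{align*}
\CI = (\CI\cap U_P^-(\breve F))\cdot (\CI\cap M(\breve F))\cdot (\CI\cap U_P(\breve F)),
\end{align*}
setting $\CI_M = \CI\cap M(\breve F)$. The normalized condition $u\in W^J$ will ensure that conjugation by a lift $\dot u$ relates $\CI x\CI$ with double cosets inside $P(\breve F)$ in a controlled manner, governed by condition (b) of Definition \ref{def:Palcove}, i.e., by the length positivity inequalities $\ell(x,w^{-1} u\beta)\geq 0$ for $\beta\in \Phi^+\setminus \Phi_J$.

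For the direction $B(M)_{\tilde x}\to B(G)_x$: given $b'\in M(\breve F)$ and $g\in M(\breve F)$ with $g^{-1}b'\sigma(g)\in \CI_M\tilde x\CI_M$, I would set $b = \dot u b'\sigma(\dot u)^{-1}$ and verify that $(g\dot u^{-1})^{-1} b\sigma(g\dot u^{-1})\in \CI x\CI$ via the Iwahori factorization together with the inclusion $\dot u\CI_M\tilde x\CI_M \sigma(\dot u)^{-1}\subseteq \CI x\CI$. The equality $\nu_M(b')=\nu(b)$ comes from applying condition (b) to the Newton cocharacter: it forces the $M$-dominant Newton point to pair nonnegatively with every $\beta\in \Phi^+\setminus \Phi_J$, so it is already $G$-dominant and hence coincides with the $G$-dominant Newton point. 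This also shows the map is injective, since Kottwitz plus Newton invariants determine elements of $B(M)$.

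The substantial content is the surjectivity $B(G)_x\to B(M)_{\tilde x}$, a Hodge--Newton-type statement. Starting from $g\in G(\breve F)$ with $g^{-1}b\sigma(g)\in \CI x\CI$, one uses the Iwahori factorization to write the representative as an $M$-part times unipotent parts in $U_P^\pm$, then uses the length positivity inequalities of condition (b) to iteratively $\sigma$-conjugate these unipotent parts into $\CI$. After each step the remaining unipotent component lies in a deeper piece of the filtration of $U_P(\breve F)$ by affine root subgroups, and the $P$-alcove inequalities ensure that this iteration does not leave $\CI x\CI$. The main obstacle is the convergence of this iterative absorption, which requires a careful indexing by affine root heights to guarantee termination in a suitable completion; once convergence is established, one obtains a representative of $[b]$ lying in $\dot u M(\breve F)\dot u^{-1}$, hence a class in $B(M)_{\tilde x}$. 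Since this theorem is proved in \cite{Goertz2010} for split groups and in \cite{Sch23} in general, I would invoke those results rather than reprove the convergence argument from scratch.
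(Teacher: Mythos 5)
The paper does not prove this statement itself; it cites \cite[\S8]{Goertz2010} for split groups and \cite[Theorem~2]{Sch23} for the general case, which is exactly what you do in your final sentence, so your proposal matches the paper's treatment. Your preceding sketch of the Iwahori-factorization and iterative-absorption argument is a reasonable outline of what those references actually do, but the operative step — invoking the cited results — is the same.
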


For any element $\l\in V=X_*(T)_{\G_0}\otimes \BR$, let $\avg_\s(\l)$ be the $\s$-average of $\l$. For any $\s$-stable subset $J\subset \BS$, we define \cite[Definition~3.2]{Chai2000} $$\pi_J(\l) = \avg_\s \frac{1}{\sharp W_J} \sum_{w\in W_J}w(\l ) .$$

The following facts on length positivity will be needed later in this article.
\begin{lemma}\label{lem:lengthPositiveForNP}
Let $x = w\varepsilon^\mu\in \widetilde W$ and $v\in \LP(x)$. Put $J = \supp_\sigma(v^{-1}\sigma(wv))$.
\begin{enumerate}[(a)]
\item The Newton-point of $\dot x$, denoted $\nu(\dot{x})$, lies in
\begin{align*}
\nu(\dot{x}) \in W_J\cdot v^{-1}\frac 1N\sum_{k=1}^N (\sigma w)^k \mu,
\end{align*}
where $N\gg 1$ is chosen such that the action of $(\sigma w)^N$ on $X_\ast(T)_{\Gamma_0}$ is trivial.
\item Suppose that $v^{-1} \sigma(wv)$ is $\sigma$-elliptic in $W_J$, i.e.\ not $\sigma$-conjugate to any element in a proper and $\sigma$-stable parabolic subgroup. Then
\begin{align*}
\nu(\dot{x}) = \pi_J \Bigl[v^{-1} \frac 1N\sum_{k=1}^N (\sigma w)^k \mu\Bigr].
\end{align*}
\end{enumerate}
\end{lemma}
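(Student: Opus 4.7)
The plan is to $\sigma$-conjugate $\dot x$ into the Levi $M_J$ and invoke Theorem~\ref{thm:Palcove}. Setting $u := wv$, the assumption $v \in \LP(x)$ together with the fact that $\tilde w := v^{-1}\sigma(wv) \in W_J$ (because $\supp(\tilde w) \subseteq \supp_\sigma(\tilde w) = J$) makes $x$ into a $(J,u,\sigma)$-alcove element: condition~(b) of Definition~\ref{def:Palcove} reads $\ell(x, v\beta) \geq 0$ for $\beta \in \Phi^+ \setminus \Phi_J$, which is part of length-positivity, while $u^{-1}x\sigma(u) = \tilde w \, \varepsilon^{\sigma(wv)^{-1}\mu} =: \tilde x$ manifestly lies in $\widetilde W_J$. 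After possibly replacing $u$ by its $W^J$-representative to normalize, Theorem~\ref{thm:Palcove} identifies $\nu(\dot x)$ with the $M_J$-dominant Newton point of $\dot{\tilde x} \in B(M_J)$, which in turn is the $M_J$-dominant representative of the naive Newton point $\lambda/N$ defined by $(\dot{\tilde x}\sigma)^N = \varepsilon^\lambda \sigma^N$.

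A standard expansion of $(\dot{\tilde x}\sigma)^N$ yields $\lambda/N = \tilde w \cdot \frac{1}{N}\sum_{k=1}^N (\sigma\tilde w)^k \tilde\mu$, where $\tilde\mu := \sigma(wv)^{-1}\mu$. The core technical step is the inductive identity
\begin{align*}
(\sigma\tilde w)^k\tilde\mu = \sigma(wv)^{-1}(\sigma w)^k\mu \qquad \text{for all } k \geq 0,
\end{align*}
whose induction step reduces to the algebraic relation $\tilde w \cdot \sigma(wv)^{-1} = v^{-1}$, which is immediate from the definition of $\tilde w$. Summing over $k$ and applying this relation once more gives $\lambda/N = v^{-1} \cdot \frac{1}{N}\sum_{k=1}^N (\sigma w)^k \mu$, which lies in the $W_J$-orbit of $\nu(\dot x)$; this establishes part~(a).

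For part~(b), the naive Newton point $\lambda/N$ is always fixed by the twisted Frobenius $\tilde w\sigma$ acting on $V := X_\ast(T)_{\Gamma_0}\otimes \mathbb Q$. Decompose $V = V^{W_J} \oplus V_J$ with $V_J := \mathbb Q\Phi_J^\vee$; both summands are $\tilde w\sigma$-stable, and $\tilde w\sigma$ acts on $V^{W_J}$ as $\sigma$. The $\sigma$-ellipticity of $\tilde w$ in $W_J$ translates to $(V_J)^{\tilde w\sigma} = 0$, so the $V_J$-component of $\lambda/N$ must vanish. Hence $\lambda/N \in (V^{W_J})^\sigma$ coincides with its own $\pi_J$-image and is automatically $M_J$-dominant. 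The main obstacle I anticipate is the careful bookkeeping of the twisted $(\sigma w)$- and $(\tilde w\sigma)$-actions when verifying the inductive identity; once that is set up correctly, every remaining step is a direct application of Theorem~\ref{thm:Palcove} or a standard fact about $\sigma$-ellipticity.
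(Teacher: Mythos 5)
Your proof is correct and takes essentially the same route as the paper: part (a) is precisely the paper's primary argument (pass to $\tilde x = u^{-1}x\sigma(u)\in\widetilde W_J$ via the $(J,wv,\sigma)$-alcove property and apply Theorem~\ref{thm:Palcove}), and your part (b) — killing the $\mathbb Q\Phi_J^\vee$-component of the naive Newton point using $(V_J)^{\sigma\tilde w}=0$ — is an equivalent reformulation of the paper's observation that $\sigma$-ellipticity forces all of $W_J$ to stabilize $\nu(\dot x)$, whence $\nu(\dot x)=\pi_J(\nu(\dot x))$. The only difference worth noting is that the paper also sketches an alternative, purely combinatorial proof of (a), pairing $v^{-1}\sum_k(\sigma w)^k\mu$ against roots in $\Phi^+\setminus\Phi_J$ and rewriting each term as a value of the length functional $\ell(x,\cdot)\geq 0$.
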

\begin{proof}
\begin{enumerate}[(a)]
\item One can see this by using Theorem~\ref{thm:Palcove}, realizing that the Newton-point of $\dot x$ is the Newton point of $\dot{\tilde x}$. Alternatively, pick $\alpha\in \Phi^+\setminus\Phi_J$. Then
\begin{align*}
&\langle v^{-1}\sum_{k=1}^N (\sigma w)^k (\sigma w)^k \mu,\alpha\rangle = \sum_{k=1}^N \langle \mu, (\sigma w)^k v\alpha\rangle \\=& \sum_{k=1}^N \langle \mu,(\sigma w)^k v\alpha\rangle + \Phi^+((\sigma w)^k v\alpha) - \Phi^+((\sigma w)^{k+1} v\alpha)
\\=&\sum_{k=1}^N \ell(x,(\sigma w)^k v\alpha) = \sum_{k=1}^N \ell(x,v(v^{-1} \sigma wv)^k\alpha).
\end{align*}
Note that each root $(v^{-1}\sigma wv)^k\alpha$ lies in $\Phi^+\setminus \Phi_J$, so that each term in the final sum is non-negative by length positivity of $v$.
\item Let $u\in W_J$ such that $v(x) = u v^{-1} \frac 1N \sum_{k=1}^N (\sigma w)^k\mu$ as in (a). Then $uv^{-1}\sigma(wvu^{-1})$ stabilizes $\nu(\dot{x})$. By assumption, we have $\supp_\sigma(uv^{-1}\sigma(wvu^{-1})) = J$, so $W_J$ stabilizes $\nu(\dot{x})$. So write $\nu(\dot{x}) = \pi_J(\nu(\dot{x}))$ and substitute the formula from (a) to get the result.\qedhere
\end{enumerate}
\end{proof}
\begin{lemma}\label{lem:LPsimpleReflections}
Let $x = w\varepsilon^\mu\in\widetilde W$ and $a\in \Delta_{\af}$, whose classical projection we denote by $\alpha := \cl(a)\in \Phi$.
\begin{enumerate}[(a)]
\item If $\ell(x,\alpha)>0$, then $x>xr_a$ and $s_\alpha \LP(x)\subseteq \LP(xr_a)$. If $\ell(x,\alpha)>1$, then the last two sets agree.
\item If $\ell(x,\alpha)<0$, then $x<xr_a$ and $s_\alpha\LP(x) = \LP(xr_a)$.
\item If $\ell(x,\alpha)=0$, then $x<xr_a$ and $s_\alpha \LP(x)\supseteq \LP(xr_a)$.

Let $v\in \LP(x)$. If $v^{-1}\alpha\in \Phi^-$, then $s_\alpha v\in \LP(xr_a)$. If instead $v^{-1}\alpha\in \Phi^+$, then $s_\alpha v\in \LP(x)$ and $v\in \LP(xr_a)$.
\end{enumerate}
\end{lemma}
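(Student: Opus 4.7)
The plan rests on a clean formula relating $\ell(xr_a,\cdot)$ to $\ell(x,\cdot)$. I would first verify the cocycle identity $\ell(xy,\beta)=\ell(x,y\beta)+\ell(y,\beta)$ for $x,y\in\widetilde W$ and $\beta\in\Phi$ by direct expansion of the defining formula $\ell(x,\beta)=-\Phi^+(w\beta)+\langle\mu,\beta\rangle+\Phi^+(\beta)$, extended to negative roots via $\ell(x,-\beta)=-\ell(x,\beta)$. Specialising to $y=r_a$, a direct computation gives $\ell(r_a,\alpha)=1$; combining this with $\ell(r_a)=1$ and the identity $\ell(r_a)=\sum_{\beta\in\Phi^+}\lvert\ell(r_a,\beta)\rvert$ forces $\ell(r_a,\delta)=0$ for every $\delta\in\Phi$ with $\delta\neq\pm\alpha$. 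The cocycle then collapses to the key identity
\begin{equation*}
\ell(xr_a,\beta)=\ell(x,s_\alpha\beta)+\mathbf 1_{\beta=\alpha}-\mathbf 1_{\beta=-\alpha}.
\end{equation*}
A parallel computation shows that $x(a)\in\Phi_\af^+$ iff $\ell(x,\alpha)\le 0$, which via the Bruhat criterion $\ell(xr_a)=\ell(x)\pm 1$ yields all three length comparisons between $x$ and $xr_a$ simultaneously.

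For the $\LP$-statements I would translate the defining inequalities through the identity above. The condition $s_\alpha v\in\LP(xr_a)$ becomes a family of inequalities which, outside the two exceptional values $\gamma=\pm v^{-1}\alpha$, simplifies to $\ell(x,v\gamma)\ge 0$; at the exceptional values it becomes a scalar inequality on $\ell(x,\alpha)$. A basic observation is that $v\in\LP(x)$ already forces $v^{-1}\alpha\in\Phi^+$ in case (a) and $v^{-1}\alpha\in\Phi^-$ in case (b), so the exceptional $\gamma$'s are either absent from $\Phi^+$ or produce inequalities that follow from the strict hypotheses $\ell(x,\alpha)\ge 1$, respectively $\le -1$. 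This immediately yields the inclusion $s_\alpha\LP(x)\subseteq\LP(xr_a)$ in (a) and the equality $s_\alpha\LP(x)=\LP(xr_a)$ in (b); the reverse inclusion in (a) under the strict hypothesis $\ell(x,\alpha)>1$ is obtained symmetrically by running the same argument with $(x,a)$ replaced by $(xr_a,a)$ and using $(xr_a)r_a=x$.

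The subtlest point is case (c), specifically the refinement that $s_\alpha v\in\LP(x)$ whenever $v^{-1}\alpha\in\Phi^+$. The translation reduces this to proving $\ell(x,vs_\delta\gamma)=0$ for every $\gamma\in\Phi^+$ with $s_\delta\gamma\in\Phi^-$ and $\gamma\neq\delta$, where $\delta:=v^{-1}\alpha\in\Phi^+$. The proposed approach is a sandwich: pair such a $\gamma$ with its companion $c\delta-\gamma\in\Phi^+$ where $c=\langle\delta^\vee,\gamma\rangle\ge 1$, and note that $v\gamma+v(c\delta-\gamma)=c\alpha$. Length-positivity of $v$ gives $\ell(x,v\gamma)\ge 0$ and $\ell(x,v(c\delta-\gamma))\ge 0$, while additivity of $\langle\mu,\cdot\rangle$ combined with the hypothesis $\ell(x,\alpha)=0$ furnishes a matching upper bound on the sum, squeezing both quantities down to zero. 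The main obstacle is controlling the $\Phi^+$-indicator discrepancies that arise when one passes between the additive identity for $\langle\mu,\cdot\rangle$ and the values of $\ell$; one expects length-positivity of $v$ at the relevant pairs of roots to align these discrepancies correctly, but this needs to be verified case by case against the sign patterns of $wv\gamma$ relative to $w\alpha$.
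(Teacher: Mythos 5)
Your route is genuinely different from the paper's: the paper disposes of the whole lemma by citation (the length comparison is \cite[Lemma~2.9]{Schremmer2022_newton}, the $\LP$-comparisons follow from the length-additivity statements \cite[Lemmas~2.12, 2.13]{Schremmer2022_newton}, and the final claim of (c) is exactly \cite[Corollary~4.7]{Schremmer2024_bruhat}), whereas you rebuild everything from the cocycle identity $\ell(xy,\beta)=\ell(x,\cl(y)\beta)+\ell(y,\beta)$. That identity is correct, your computation $\ell(r_a,\beta)=\mathbf 1_{\beta=\alpha}-\mathbf 1_{\beta=-\alpha}$ is correct for both kinds of simple affine roots, and your treatment of (a), (b) and the first part of (c) is sound; in particular the forward inclusions are proved directly and the reverse inclusions follow by applying the forward inclusions to $xr_a$ (using $\ell(xr_a,\alpha)=1-\ell(x,\alpha)$), so there is no circularity.

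The gap is in the last statement of (c), the one the paper outsources to \cite[Corollary~4.7]{Schremmer2024_bruhat}. Your sandwich needs the inequality $\ell(x,v\gamma)+\ell(x,v\gamma')\le c\,\ell(x,\alpha)=0$, and this does \emph{not} follow from additivity of $\langle\mu,\cdot\rangle$ together with length positivity at $\gamma,\gamma'$: the discrepancy is
\begin{align*}
\ell(x,v\gamma)+\ell(x,v\gamma')-c\,\ell(x,\alpha)=\bigl[\Phi^+(v\gamma)+\Phi^+(v\gamma')-c\Phi^+(\alpha)\bigr]-\bigl[\Phi^+(wv\gamma)+\Phi^+(wv\gamma')-c\Phi^+(w\alpha)\bigr],
\end{align*}
and for arbitrary triples of roots with $v\gamma+v\gamma'=c\alpha$ this can equal $+1$ (e.g.\ both $v\gamma,v\gamma'$ positive while only one of $wv\gamma,wv\gamma'$ is), which would leave you with sum $1$ and no contradiction. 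The mechanism that saves the argument is not length positivity ``at the relevant pairs of roots'', as you suggest, but the special position of $\alpha=\cl(a)$ for $a\in\Delta_\af$: either $\alpha$ is a finite simple root, in which case two roots summing to $c\alpha$ can never both be positive (they would both be multiples of $\alpha$, forcing $\gamma=\gamma'=\delta$) nor both negative, so exactly one of $v\gamma,v\gamma'$ is positive and the bracketed terms compare correctly in both sub-cases $w\alpha\in\Phi^\pm$; or $\alpha=-\theta_i$ is the negative of a highest root, in which case $c=1$ (a decomposition with $c\ge 2$ forces $\delta$ short while $\theta_i$ is long) and both $v\gamma,v\gamma'$ are negative (if $v\gamma'$ were positive then $v\gamma=-\theta_i-v\gamma'$ would lie strictly below the lowest root), after which the indicator bookkeeping again closes. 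With these two observations inserted, your proof is complete; without them, the case-by-case verification you defer would not terminate in the direction you expect.
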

\begin{proof}
The equivalence $x>xr_a\iff \ell(x,\alpha)>0$ follows from \cite[Lemma~2.9]{Schremmer2022_newton}. To compare the length positive sets in (a), write $x$ as a length additive product of $xr_a$ and $r_a$, then apply \cite[Lemma~2.13]{Schremmer2022_newton}. Similarly, for (b) and (c) write $xr_a$ as a length additive product of $x$ and $r_a$. The final statement in (c) is \cite[Corollary~4.7]{Schremmer2024_bruhat}.
\end{proof}
\begin{remark}
Analogous conditions for statements of the form $r_ax<x$ can be obtained from \cite[Lemma~2.12]{Schremmer2022_newton}.
\end{remark}

\section{Generic Newton points and minimal Newton points}\label{sec:3}

\subsection{Minimal Newton point of positive Coxeter type elements}

\begin{proposition}\label{prop:minNewton}
Let $(x=w\varepsilon^{\mu},v)$ be a positive Coxeter pair with support $J=\supp_{\s}(v^{-1}\s(wv))$. Then there is a unique minimal element in $B(G)_x$, whose Newton point equals
$$\nu(b_{x,\min}) = \nu(\dot{x}) = \pi_J(v^{-1}\mu).$$
\end{proposition}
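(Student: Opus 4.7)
The proposition has three assertions: existence and uniqueness of a minimum in $B(G)_x$; its identification with $[\dot x]$; and the explicit Newton-point formula. The plan is to dispatch the formula via a direct manipulation of Lemma~\ref{lem:lengthPositiveForNP}, and to argue minimality by reducing to the Levi $M_J$ through Theorem~\ref{thm:Palcove}.

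For the formula, I would apply Lemma~\ref{lem:lengthPositiveForNP}(b), which is available because $y:=v^{-1}\sigma(wv)$ is $\sigma$-elliptic in $W_J$ (being a $\sigma$-Coxeter element of $W_J$ with full support $J$); this yields
\[
\nu(\dot x)=\pi_J\Bigl[v^{-1}\tfrac{1}{N}\sum_{k=1}^{N}(\sigma w)^k\mu\Bigr].
\]
The key identity, verified directly as operators on $V$, is $v^{-1}(\sigma w)v=y\sigma$: indeed $(v^{-1}\sigma w v)(\lambda) = v^{-1}\sigma(wv\lambda)= v^{-1}\sigma(wv)\sigma(\lambda) = y\sigma(\lambda)$. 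Iterating yields $v^{-1}(\sigma w)^k\mu = (y\sigma)^k(v^{-1}\mu)$, and since $\pi_J$ is invariant under both the $W_J$-action (by its internal $W_J$-average) and $\sigma$ (by its internal $\sigma$-average), the operator $\frac{1}{N}\sum_k(y\sigma)^k$ is absorbed by $\pi_J$, collapsing the right side to $\pi_J(v^{-1}\mu)$.

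For minimality, the inclusion $[\dot x]\in B(G)_x$ is automatic since the identity coset of $G(\breve F)/\mathcal I$ lies in $X_x(\dot x)$. For the matching lower bound, I would translate the positive Coxeter condition into a normalized $(J,u,\sigma)$-alcove structure on $x$, choosing $u\in W^J$ as the minimal representative of $wvW_J$; this is the equivalence recalled after Definition~\ref{def:Palcove}. Theorem~\ref{thm:Palcove} then gives a Newton-preserving bijection $B(M_J)_{\tilde x}\simeq B(G)_x$ for $\tilde x := u^{-1}x\sigma(u)\in\tW_J$, reducing the claim to showing that $[\dot{\tilde x}]$ is the minimum of $B(M_J)_{\tilde x}$. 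A direct computation shows that $(\tilde x, w_J')$ is itself a positive Coxeter pair inside $M_J$ of full support $J$, where $w_J'\in W_J$ satisfies $wv=uw_J'$; applying the formula of the previous paragraph inside $M_J$ identifies $\nu(\dot{\tilde x})$ with $\pi_J(v^{-1}\mu)$. It therefore remains to establish the Mazur-type Newton lower bound $\nu_{M_J}(b')\geq\nu(\dot{\tilde x})$ for every $[b']\in B(M_J)_{\tilde x}$.

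I expect the main obstacle to be this internal Mazur-type bound. In general $\tilde x$ need not be of minimal length in its $\sigma$-conjugacy class in $\tW_J$, so He's \cite[Theorem~3.5]{He2014} does not directly force $B(M_J)_{\tilde x}=\{[\dot{\tilde x}]\}$. The plan is to exploit the stability of the positive Coxeter structure under Deligne--Lusztig reduction (Lemmas~\ref{lem:reduction1} and~\ref{lem:reduction2}) to propagate the Newton formula along every branch of the reduction tree of $\tilde x$, so that each leaf contributes a Newton point dominating $\pi_J(v^{-1}\mu)$ and delivers the required uniform lower bound on $B(M_J)_{\tilde x}$.
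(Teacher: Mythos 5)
Your treatment of the formula $\nu(\dot x)=\pi_J(v^{-1}\mu)$ is correct and is essentially the paper's (Lemma~\ref{lem:lengthPositiveForNP}(b) applies since a $\sigma$-Coxeter element of $W_J$ is $\sigma$-elliptic in $W_J$, and $\pi_J$ absorbs the operator $\frac1N\sum_k(y\sigma)^k$), and your passage to the Levi via the normalized $(J,u,\sigma)$-alcove structure and Theorem~\ref{thm:Palcove} is exactly the paper's route. The genuine gap is the step you yourself flag as "the main obstacle" and leave as a plan: the lower bound $\nu_{M_J}(b')\geq\nu_{M_J}(\dot{\tilde x})$ for all $[b']\in B(M_J)_{\tilde x}$. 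You do not need any Mazur-type inequality or reduction-tree propagation here; the point you miss is that you have already computed $\nu_{M_J}(\dot{\tilde x})=\pi_J(v^{-1}\mu)$, which is fixed by $W_J$, i.e.\ orthogonal to $\Phi_J$, so $[\dot{\tilde x}]$ is \emph{basic} in $B(M_J)$. Every $[b']\in B(M_J)_{\tilde x}$ has $\kappa_{M_J}(b')=\kappa_{M_J}(\tilde x)=\kappa_{M_J}(\dot{\tilde x})$, and for a fixed Kottwitz point the basic class is the unique minimal element: the difference $\nu_{M_J}(b')-\nu_{M_J}(\dot{\tilde x})$ is $M_J$-dominant (the subtracted term is $M_J$-central) and lies in $\mathbb{Q}\Phi_J^\vee$, hence is a nonnegative combination of positive coroots of $\Phi_J$. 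Since $[\dot{\tilde x}]\in B(M_J)_{\tilde x}$, it is the minimum, and Theorem~\ref{thm:Palcove} transports minimality and the Newton point to $B(G)_x$. This is precisely the paper's closing sentence "since $\nu(\dot x)$ is centralized by $M_J$, it must be the smallest element."

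Your proposed workaround is, besides being unnecessary, not adequate as stated. Lemmas~\ref{lem:reduction1} and~\ref{lem:reduction2} preserve the positive Coxeter structure and control the $\lambda$-invariant of the \emph{maximal} class along edges, but they do not by themselves give the comparison $\nu_{M_J}(\dot y)\geq\pi_J(v^{-1}\mu)$ at each leaf $y$ of a reduction tree of $\tilde x$; supplying that comparison would amount to re-proving the inductive argument of Theorem~\ref{thm:fctConsequences}(a), whose base case quotes Proposition~\ref{prop:J}(a) and hence this very proposition, so one would also have to take care to avoid circularity. The basic-class observation above closes the proof in one line and is the intended argument.
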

\begin{proof}
The identity $\nu(\dot x) = \pi_J(v^{-1}\mu)$ can be seen from Lemma~\ref{lem:lengthPositiveForNP}.

Write $wv = uz$ for some $u\in W^{J}$ and $z\in W_J$. By \S~\ref{sec:Palcove}, we see that $x$ is a (normalized) $(J,u,\s)$-alcove element. Let $\tilde{x} = u^{-1}x\s(u) = \varepsilon^{z^{-1}v^{-1}\mu}z^{-1}v^{-1}\s(wv)\s(z) $. By Theorem \ref{thm:Palcove}, the minimal Newton point $\nu(b_{x,\min})$ of $x$ equals
$$\min\{\nu_{M_J}(b)\mid [b]_{M_J}\in B(M_J)_{\tilde x}\},$$
where $\nu_{M_J}$ is the $M_J$-dominant Newton point. Since $\nu(\dot{x})$ is centralized by $M_J$, we see that it must be the smallest element in the above set.
\end{proof}

\subsection{Quantum Bruhat graph}

For any $x\in\tW$, there is a unique maximal element in $B(G)_x$, which is denoted by $[b_{x,\max}]$. We denote its Newton point by $\nu(b_{x,\max})$, called the generic Newton point of $x$. The first author gives a characterization of generic Newton point in \cite{Schremmer2022_newton} via quantum Bruhat graph, as we now explain. The notion quantum Bruhat graph will only be used in Proposition \ref{prop:finiteCoxGNP}.

The quantum Bruhat graph $\text{QBG}(\Phi)$ is a directed graph, whose set of vertices is the set $W_0$. Its edges are of the form $w\rightarrow ws_\a$ for $w\in W_0$ and $\a\in \Phi^+$ whenever one of the following conditions is satisfied:
\begin{itemize}
\item $\ell(ws_\a) = \ell(w)+1$ or
\item $\ell(ws_\a) = \ell(w)+1-\langle \a^\vee,2\rho\rangle$.
\end{itemize}
The edges satisfying the first condition are called \emph{Bruhat} edges and the edges satisfying the second condition are called \emph{quantum} edges. Note that the Bruhat edges go upwards whereas the quantum edges go downwards.

It is easy to see that $\text{QBG}(\Phi)$ is connected. For $w,w'\in W$, the QBG distance $d_{\text{QBG}}(w \Rightarrow w')\in\mathbb Z_{\ge 0}$ is defined to be the length of shortest path in $\text{QBG}(\Phi)$ from $w$ to $w'$.

The weight of a Bruhat edge is defined to be zero while the weight of a quantum edge $w\rightarrow ws_\a$ is defined to be $\a^{\vee}$. The weight of a path is defined to be sum of weights of each edge of the path.
\begin{lemma}[{\cite{Brenti1998}}]
Let $w,w'\in W$. Then all shortest paths from $w$ to $w'$ in $\text{QBG}(\Phi)$ have the same weight. We denote this weight by $\wt(w\Rightarrow w')$.\rightqed
\end{lemma}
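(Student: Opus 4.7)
The plan is to proceed by induction on the distance $d := d_{\text{QBG}}(w \Rightarrow w')$. The base case $d = 0$ is trivial, since then $w = w'$ and the empty path has weight $0$. For the inductive step, fix two shortest paths $P, P'$ of length $d$ from $w$ to $w'$. If $P, P'$ share their initial edge $w \to w_1$, then each continues with a shortest path of length $d-1$ from $w_1$ to $w'$, and the inductive hypothesis applied to $(w_1, w')$ matches the weights of the two tails, hence of $P$ and $P'$.

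The essential case is when $P$ begins with $w \to w_1 = ws_\a$ while $P'$ begins with a different edge $w \to w_1' = ws_\b$. Here I would invoke a local diamond/confluence property for $\text{QBG}(\Phi)$: whenever two distinct edges out of $w$ both begin shortest paths to a common target $w'$, there exists a vertex $w_2$ with edges $w_1 \to w_2$ and $w_1' \to w_2$ such that (i) $w_2$ lies on some shortest path from $w$ to $w'$, in particular $d_{\text{QBG}}(w_2 \Rightarrow w') = d - 2$, and (ii) the two-step weights agree:
\begin{align*}
\wt(w \to w_1) + \wt(w_1 \to w_2) = \wt(w \to w_1') + \wt(w_1' \to w_2).
\end{align*}
Granted this, the inductive hypothesis applied to $(w_1, w')$ allows me to replace the tail of $P$ by any shortest path from $w_1$ to $w'$ passing through $w_2$ without altering its weight, and similarly for $P'$; combined with (ii), this forces $\wt(P) = \wt(P')$.

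The main obstacle is establishing the diamond lemma itself, which is a purely combinatorial statement about the Weyl group. Its proof proceeds by case analysis on the types (Bruhat or quantum) of each of the four edges $w \to w_1$, $w \to w_1'$, $w_1 \to w_2$, $w_1' \to w_2$, on the rank-two subsystem generated by $\a$ and $\b$, and on the relative positions of $w, w'$ with respect to these reflections. The required equalities come from the length conditions $\ell(ws_\gamma) \in \{\ell(w)+1,\; \ell(w)+1-\langle \gamma^\vee, 2\rho\rangle\}$ defining QBG edges, together with rank-two root-system relations that control how coroots transform under reflection. This diamond property is the combinatorial heart of the Brenti--Fomin--Postnikov theory of $\text{QBG}(\Phi)$, and verifying it across all cases is by far the most technical step of the argument.
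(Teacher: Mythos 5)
The paper gives no proof of this lemma: it is quoted from Brenti--Fomin--Postnikov with a \emph{qed} mark, and the standard proof in the literature (Postnikov's well-definedness of $\wt(w\Rightarrow w')$, building on the Brenti--Fomin--Postnikov theory of mixed Bruhat operators) runs through the Yang--Baxter equation: one shows that the product of the quantum Bruhat operators $1+Q_\beta$ taken over all positive roots $\beta$ in a reflection ordering is independent of the ordering, and reads off the equality of weights of shortest paths from the coefficients of that operator. Your induction-on-distance scaffolding is fine as far as it goes, but it outsources the entire mathematical content to the ``diamond/confluence property,'' which you state but do not prove, and which is not a known form of the local structure of $\mathrm{QBG}(\Phi)$.

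This is a genuine gap, and in two distinct ways. First, for $d=2$ your diamond lemma \emph{is} the statement to be proved (take $w_2=w'$), so nothing has been reduced; that case is exactly the rank-two Yang--Baxter verification, which is the hard part and which you only gesture at. Second, for $d\geq 3$ the diamond lemma asserts something strictly stronger than well-definedness of weights: that two shortest paths diverging at their first edge can always be rerouted so as to reconverge after two steps at a vertex $w_2$ with $d_{\text{QBG}}(w_2\Rightarrow w')=d-2$. Nothing in the definition of the quantum Bruhat graph guarantees such early reconvergence, and I know of no reference establishing it; the local exchange statements that are actually available (e.g.\ the behaviour of $d_{\text{QBG}}$ and $\wt$ under left multiplication by a simple reflection, as in the Lenart et al.\ lemma used elsewhere in this paper) are of a different shape and do not yield your diamond. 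To make this route work you would have to prove the diamond property from scratch, including its existence part, across all rank-two configurations of Bruhat and quantum edges --- at which point you would essentially be redoing the Brenti--Fomin--Postnikov argument. As it stands, the proposal is a reduction of the lemma to an unproved and possibly false combinatorial claim.
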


The following is the quantum Bruhat graph of type $A_2$:

\def\qecolor{dashed}
\def\seshift{0.5ex}
\def\quantumEdge{dashed}
\def\shortEdgeShiftRight{0.5ex}
\begin{align*}
\begin{tikzcd}[ampersand replacement=\&,column sep=2em]
\&s_1 s_2 s_1\ar[ddd,\qecolor]
\ar[dl,\qecolor,shift right=\seshift]\ar[dr,\qecolor,shift left=\seshift]\\
s_1 s_2\ar[ur,shift right=\seshift]\ar[d,\qecolor,shift right=\seshift]\&\&
s_2 s_1\ar[ul,shift left=\seshift]\ar[d,\qecolor,shift left=\seshift]\\
s_1\ar[u,shift right=\seshift]\ar[urr]\ar[dr,\qecolor,shift right=\seshift]\&\&s_2\ar[u,shift left=\seshift]\ar[ull]\ar[dl,\qecolor,shift left=\seshift]\\
\&1\ar[ru,shift left=\seshift]\ar[lu,shift right=\seshift]
\end{tikzcd}
\end{align*}

For any $\l\in X_*(T)_{\G_0}$, define $\conv(\l) $ as the unique maximal element in the set
$$\{\pi_J(\l) \mid J \subseteq \BS, \s(J)=J\},$$
see \cite[\S3.1]{Schremmer2022_newton}. Note that $\avg_{\s}(\l)$, $\pi_J(\l)$, $\conv(\l)$ are also well-defined for $\l\in X_*(T)_{\G}$.

For any $[b]\in B(G)$, Hamacher and Viehmann define $\l$-invariant in \cite[Lemma/Definition 2.1]{Hamacher2018} as the unique maximal element in the set
$$\{\l\in X_*(T)_{\G} \mid   \avg_{\s}(\l)\le \nu(b)\text{ and }\k(b)=\k(\l) \text{ in }\pi_1(G)_{\G}  \}.$$
By \cite[Lemma 3.8]{Schremmer2022_newton}, we have $\conv(\l(b)) = \nu(b)$.

We have the following description of the generic Newton point $[b_{x,\max}]$.

\begin{theorem}{{\cite[Theorem 4.2]{Schremmer2022_newton}}}\label{thm:generic}
Let $[b_{x,\max}]$ the maximal element in $B(G)_x$. Then  

$$\l(b_{x,\max}) = \max_{v\in\LP(x)} (v^{-1}(\mu) - \wt(v\Rightarrow \s(wv)))\in X_*(T)_{\G}.$$
\end{theorem}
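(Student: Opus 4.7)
My plan is to establish the identity as two inequalities in the dominance order on $X_*(T)_\Gamma$, proceeding by induction on $\ell(x)$ via the Deligne--Lusztig reduction of Proposition~\ref{DL-reduction}.

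For the base case I would take $x$ of minimal length in its $\s$-conjugacy class. Then $B(G)_x$ is a singleton by \cite[Theorem~3.5]{He2014}, and its unique Newton point is explicitly accessible through the $P$-alcove structure of \cite[\S3.3]{Goertz2015}. The $P$-alcove parameters $(J,u)$ produce a distinguished $v\in\LP(x)$ with $v^{-1}\s(wv)\in W_J$, and I would argue that a shortest QBG path from $v$ to $\s(wv)$ can be realised through Bruhat edges inside the coset $vW_J$ alone, so its weight vanishes, the right-hand side collapses to $v^{-1}\mu$, and this matches $\nu(\dot x)$ via Lemma~\ref{lem:lengthPositiveForNP}(b).

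For the inductive step, choose (after possibly $\s$-conjugating $x$) an $a\in\Delta_\af$ with $\ell(r_axr_{\s a})=\ell(x)-2$, so that Proposition~\ref{DL-reduction} yields $B(G)_x=B(G)_{r_ax}\cup B(G)_{r_axr_{\s a}}$ and hence $\l(b_{x,\max})=\max\bigl(\l(b_{r_ax,\max}),\,\l(b_{r_axr_{\s a},\max})\bigr)$. By induction the claimed formula applies to both reduced elements; the closing step is to show that the two resulting maxima, combined, reproduce the formula for $x$ itself, via the $\LP$-correspondences of Lemma~\ref{lem:LPsimpleReflections} (or the left-multiplication variant mentioned in its remark) together with a compatibility of QBG weights.

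The hard part will be this QBG weight bookkeeping. Under the identifications of length-positive sets induced by each reduction, an element $v\in\LP(x)$ gets mapped to some $v'\in\LP(r_ax)$ or $\LP(r_axr_{\s a})$ (either $v$ itself or $s_{\cl(a)}v$), and the difference $v^{-1}\mu-(v')^{-1}\mu'$ (where $\mu'$ is the translation of the reduced element) is an explicit integer multiple of a single coroot. I expect this difference to be exactly cancelled by a controlled change in $\wt(v\Rightarrow\s(wv))$ versus $\wt(v'\Rightarrow\s(w'v'))$, possibly through the conversion of a Bruhat edge into a quantum edge or vice versa along the corresponding path. Isolating this as a clean combinatorial lemma -- comparing $\wt(v\Rightarrow w)$ with $\wt(s_\alpha v\Rightarrow w)$, $\wt(v\Rightarrow s_\alpha w)$, and their composites -- is the delicate technical heart of the argument and is where I expect most of the work to lie.
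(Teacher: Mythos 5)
This statement is imported verbatim from \cite[Theorem~4.2]{Schremmer2022_newton}; the paper under review gives no proof of it, so there is no internal argument to compare against. Judged on its own merits, your proposal has a genuine gap already in the base case. For a minimal-length element the claim that the weight term vanishes for the distinguished $v$ is false. Take $G=\GL_3$ split and the superbasic length-zero element $x=s_1s_2\varepsilon^{(0,0,1)}$: the minimal $v$ with $v^{-1}\mu$ dominant is $v=s_2s_1$, and $\sigma(wv)=wv=1$, so any path from $v$ to $1$ in $\QBG(\Phi)$ is globally length-decreasing and must use quantum edges; indeed $\wt(s_2s_1\Rightarrow 1)=\alpha_1^\vee+\alpha_2^\vee\neq 0$, and the right-hand side of the theorem is $(1,0,0)-(1,0,-1)=(0,0,1)=\lambda(\dot x)$, not $v^{-1}\mu=(1,0,0)$. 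You also conflate $\lambda(b_{x,\max})$ with $\nu(\dot x)$: these live in different groups and differ here ($(0,0,1)$ versus $(1/3,1/3,1/3)$), and even for minimal-length elements computing $\lambda(\dot x)$ and matching it against the maximum over all of $\LP(x)$ is a nontrivial statement that your sketch does not address.

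In the inductive step you correctly identify where the difficulty lies, but then defer it entirely: the "clean combinatorial lemma" comparing $\wt(v\Rightarrow u)$ with $\wt(s_{\sigma\alpha}v\Rightarrow s_{\sigma\alpha}u)$ is essentially \cite[Lemma~7.7]{Lenart2015} (used in the paper's Lemma~\ref{lem:reduction2} in the special positive-Coxeter situation), and it holds only under specific sign conditions on $v^{-1}\sigma\alpha$ and $u^{-1}\sigma\alpha$. Turning it into the statement you need --- that the maximum of $v^{-1}\mu-\wt(v\Rightarrow\sigma(wv))$ over \emph{all} of $\LP(x)$ is correctly reproduced from the corresponding maxima for $r_ax$ and $r_axr_{\sigma a}$, given that Lemma~\ref{lem:LPsimpleReflections} identifies these length-positive sets only partially and case-by-case, and that one must also pass through length-preserving conjugations $x\approx_\sigma x'$ before a length-reducing $a$ exists --- is the entire content of the theorem, not a finishing touch. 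As it stands the proposal is a plausible strategy outline with a false base case and the main lemma left open; it does not constitute a proof.
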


In order to apply Theorem \ref{thm:generic} to positive Coxeter type elements, we need the notion of reflection length.


\begin{definition}\label{def:finiteCoxeter}
For $w\in W$, we write $X_\ast(T)_{\mathbb Q}^{\sigma w}$ for the set of fixed points of the composite action $\sigma\circ w$ on $X_\ast(T)_{\Gamma_0}\otimes\mathbb Q$. We define
\begin{align*}
\ell_{R,\sigma}(w) := \dim X_\ast(T)_{\mathbb Q}^{\sigma} - \dim X_\ast(T)_{\mathbb Q}^{\sigma w}.
\end{align*}
\end{definition}

If $G$ is split, then $\ell_{R,\sigma}(w) = \ell_{R}(w)$ is the \emph{reflection length} of $w$, i.e., the minimal number $\ell$ such that $w$ can be written as product of $\ell$ reflections of $W$, cf.\ \cite{Carter2006}.

\begin{lemma}\label{lem:reflectionLength}
Let $w\in W$.
\begin{enumerate}[(a)]
\item If $w'\in W$ is $\sigma$-conjugate to $w$, then $\ell_{R,\sigma}(w) = \ell_{R,\sigma}(w')$.

\item Let $\alpha_1,\dotsc,\alpha_n\in \Phi$ such that $w = s_{\alpha_1}\cdots s_{\alpha_n}$, and let $U\subseteq X^\ast(T)_{\mathbb Q}^{\sigma}$ denote the subspace generated by their $\sigma$-averages:
\begin{align*}
U = \mathbb Q\avg_\sigma(\alpha_1)+\cdots +\mathbb Q \avg_\sigma(\alpha_n).
\end{align*}
Then $\ell_{R,\sigma}(w)\leq \dim U$.
\item The element $w$ is a partial $\sigma$-Coxeter element if and only if $\ell_{R,\sigma}(w) = \ell(w)$.
\item Among all $\sigma$-stable subsets $J\subseteq \Delta$ such that the corresponding parabolic subgroup $W_J\subseteq W$ contains a $\sigma$-conjugate of $w$, choose one that is minimal. Then $\ell_{R,\sigma}(w) = \#(J/\sigma)$, the number of $\sigma$-orbits of simple roots in $J$.
\end{enumerate}
\end{lemma}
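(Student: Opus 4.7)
The proof proceeds in the order (a), (b), (c)$\Leftarrow$, (d)$\leq$, and finally (d)$\geq$ together with (c)$\Rightarrow$. Part (a) is a direct operator identity: writing $w' = u^{-1} w \sigma(u)$ and viewing everything as acting on $V := X_\ast(T)_{\Gamma_0}\otimes \mathbb{Q}$, the computation $\sigma w' = \sigma(u)^{-1} \circ (\sigma w) \circ \sigma(u)$ shows that $\sigma w$ and $\sigma w'$ are conjugate operators on $V$ with fixed subspaces of equal dimension. Since $\dim V^\sigma$ does not depend on $w$, this gives $\ell_{R,\sigma}(w) = \ell_{R,\sigma}(w')$.

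For (b), I would use the perfect pairing between $X^\ast(T)_{\mathbb Q}^\sigma$ and $V^\sigma$. Let $Z \subseteq V^\sigma$ be the subspace annihilated by all $\avg_\sigma(\alpha_i)$; it has codimension $\dim U$ in $V^\sigma$. For $v \in Z$, the $\sigma$-invariance of $v$ gives $\langle\alpha_i,v\rangle = \langle \avg_\sigma(\alpha_i),v\rangle = 0$ for each $i$, so every reflection $s_{\alpha_i}$ fixes $v$ and $(\sigma w)(v) = \sigma(v) = v$. Hence $Z \subseteq V^{\sigma w}$, which yields $\ell_{R,\sigma}(w) \leq \dim U$.

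The $\Leftarrow$ direction of (c) and the $\leq$ direction of (d) follow readily from (a) and (b). For (c)$\Leftarrow$, apply (b) to a reduced expression $w = s_{\alpha_1}\cdots s_{\alpha_{\ell(w)}}$; the hypothesis $\ell_{R,\sigma}(w) = \ell(w)$ forces $\dim U = \ell(w)$, hence the $\avg_\sigma(\alpha_i)$ are linearly independent, which happens if and only if the simple roots $\alpha_i$ lie in pairwise distinct $\sigma$-orbits --- precisely the partial $\sigma$-Coxeter condition. For (d)$\leq$, after $\sigma$-conjugating $w$ into $W_J$ via (a), expressing $w$ as a product of reflections uses roots in $\Phi_J$, whose $\sigma$-averages lie in the $\#(J/\sigma)$-dimensional subspace $\mathrm{span}(\avg_\sigma(\alpha) \mid \alpha \in J) \subseteq X^\ast(T)_{\mathbb Q}^\sigma$, so (b) gives the desired bound.

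The main obstacle is the remaining direction (d)$\geq$, which also implies (c)$\Rightarrow$, and reduces to a single twisted ellipticity statement: if $J$ is minimal $\sigma$-stable such that $w \in W_J$ (after $\sigma$-conjugation), then $V_J^{\sigma w} = 0$, where $V_J := \mathrm{span}_{\mathbb Q}(\alpha^\vee \mid \alpha \in J)$. The reduction uses the orthogonal decomposition $V = V_J \oplus V_J^\perp$ with respect to a $W$- and $\sigma$-invariant inner product: both summands are $\sigma$-stable, $w$ acts trivially on $V_J^\perp$, hence $V^{\sigma w} = V_J^{\sigma w} \oplus (V_J^\perp)^\sigma$ and $\ell_{R,\sigma}(w) = \#(J/\sigma) - \dim V_J^{\sigma w}$. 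To prove the ellipticity itself, I plan to argue contrapositively: from a nonzero $v \in V_J^{\sigma w}$, produce a nonzero vector in $V_J$ fixed by both $\sigma$ and $w$, whose parabolic stabilizer in $W_J$ is then a proper $\sigma$-stable subparabolic containing $w$, contradicting minimality of $J$. Constructing such a vector is delicate: a naive $\sigma$-average of $v$ need not be $w$-stable, and a naive $w$-orbit sum of $v$ need not be $\sigma$-stable, so the two averagings must be mixed carefully. I expect the cleanest route is to first invoke He--Nie's theory of minimal-length elements in $\sigma$-conjugacy classes to reduce to the case where $w$ is itself a $\sigma$-Coxeter element of $W_J$, and then apply the twisted analogue of Springer's theorem on ordinary Coxeter elements to conclude $V_J^{\sigma w} = 0$.
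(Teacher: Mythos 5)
Parts (a) and (b), the implication $\ell_{R,\sigma}(w)=\ell(w)\Rightarrow w$ partial $\sigma$-Coxeter, and the inequality $\ell_{R,\sigma}(w)\leq\#(J/\sigma)$ in (d) are all correct and essentially identical to the paper's arguments (your $Z$ is the paper's $U'$). Your reduction of the remaining direction of (d) to the twisted ellipticity statement $V_J^{\sigma w}=0$, via the orthogonal decomposition $V=V_J\oplus V_J^{\perp}$, is also sound.

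The gap is in the proof of that ellipticity statement: neither of your two routes closes it. Route one asks for a nonzero vector fixed by \emph{both} $\sigma$ and $w$; you rightly sense this is the crux, but the resolution is not a careful mixture of the two averagings — it is dominantization, and it requires twisting $w$. The paper's argument: take $0\neq\mu\in V^{\sigma w}$ and $v\in W$ with $v^{-1}\mu$ dominant; then $\sigma^{-1}(v^{-1}\mu)=\sigma^{-1}(v)^{-1}wv\,(v^{-1}\mu)$ is again dominant and lies in the same $W$-orbit, hence equals $v^{-1}\mu$. So $v^{-1}\mu$ is fixed by $\sigma$ and by the twisted conjugate $w'=\sigma^{-1}(v)^{-1}wv$, which therefore lies in the proper $\sigma$-stable standard parabolic $W_{J'}$ with $J'=\{\alpha\in\Delta\mid\langle v^{-1}\mu,\alpha\rangle=0\}$, contradicting minimality (formally, one inducts on $J$). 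No vector fixed by both $\sigma$ and the original $w$ is needed, and in general none exists. Route two is unsound: minimal-length elements of $\sigma$-elliptic classes are not $\sigma$-Coxeter elements in general (e.g.\ $w_0=-\mathrm{id}$ in type $B_2$ is elliptic of reflection length $2$ and is the unique minimal-length element of its class), so He--Nie's theory cannot reduce part (d) to the Coxeter case, and Springer's theorem only covers genuine $\sigma$-Coxeter elements. Citing Springer would suffice for the forward direction of (c) alone, since there $w$ \emph{is} a $\sigma$-Coxeter element of $W_{\supp_\sigma(w)}$; but part (d) for arbitrary $w$ — and hence your derivation of (c) from (d) — still needs the dominantization trick. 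Note also that the paper proves the forward direction of (c) directly and elementarily, by showing $\langle p,\alpha_i\rangle=0$ for every $p\in X_\ast(T)_{\mathbb Q}^{\sigma w}$ via a highest-index argument, without appealing to Springer at all.
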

\begin{proof}
\begin{enumerate}[(a)]
\item If $w' = v^{-1}w\s( v)$ and $p\in X_\ast(T)_{\mathbb Q}$, then
\begin{align*}
&p\in X_\ast(T)_{\mathbb Q}^{\sigma w'} \iff p = \sigma(w'(p))\iff p = \s( v)^{-1} (\sigma w)  \s(v)(p) 
\iff \s(v)(p) \in X_{\ast}(T)_{\mathbb Q}^{\sigma w}.
\end{align*}
Hence $\dim X_\ast(T)_{\mathbb Q}^{\sigma w'} = \dim X_\ast(T)_{\mathbb Q}^{\sigma w}$.
\item Define $U'\subseteq X_\ast(T)_{\mathbb Q}^\sigma$ as the orthogonal complement of $U$, i.e.\
\begin{align*}
U' = \{p\in X_\ast(T)_{\mathbb Q}^\sigma \mid \forall q\in U:~\langle p,q\rangle=0\}.
\end{align*}
Then the codimension of $U'$ in $X_\ast(T)_{\mathbb Q}^\sigma$ is at most $\dim U$. For $p\in U'$ and $i\in\{1,\dotsc,n\}$, we have $\langle p,\alpha_i\rangle = \langle p,\avg_\sigma(\alpha_i)\rangle=0$ since $p=\sigma(p)$. It follows that $U'\subseteq X_\ast(T)_{\mathbb Q}^{\sigma w}$.
\item If $\ell(w) = \ell_{R,\sigma}(w)$, we calculate
\begin{align*}
\#(\supp_\sigma(w)/\sigma)\leq \ell(w) = \ell_{R,\sigma}(w)\underset{\text{(b)}}\leq \#(\supp_\sigma(w)/\sigma).
\end{align*}
Thus $w$ is a partial $\sigma$-Coxeter element.

Assume now that $w$ is a partial $\sigma$-Coxeter element and pick a reduced word $w = s_{\alpha_1}\cdots s_{\alpha_n}$. By (b), we have $\ell_{R,\sigma}(w)\leq n$.

Let $p\in X_\ast(T)_{\mathbb Q}^{\sigma w}$. We claim that $p\in X_\ast(T)_{\mathbb Q}^\sigma$ and $\langle p,\alpha_i\rangle=0$ for $i=1,\dotsc,n$:

Indeed, if this was not the case, we would choose $i\in\{1,\dotsc,n\}$ maximally such that $\langle p,\alpha_i\rangle\neq 0$. Then we may write
\begin{align*}
w p = s_{\alpha_1}\cdots s_{\alpha_i}(p) \equiv p - \langle p,\alpha_i\rangle \alpha_i^\vee\pmod{\alpha_1^\vee,\dotsc,\alpha_{i-1}^\vee}.
\end{align*}
Since the simple roots $\alpha_1,\dotsc,\alpha_n$ have pairwise distinct $\sigma$-orbits, we get $\avg_\sigma(p)\neq \avg_\sigma(wp)$, contradicting $p\in X_\ast(T)_{\mathbb Q}^{\sigma w}$.

This argument proves $\langle p,\alpha_i\rangle=0$ for $i=1,\dotsc,n$. In particular $p = wp$. The condition $p\in X_\ast(T)_{\mathbb Q}^{\s w}$ thus implies $p=\sigma(p)$.

We conclude that $X_\ast(T)_{\mathbb Q}^{\sigma w}$ is contained in a subspace of $X_\ast(T)_{\mathbb Q}^{\sigma}$ cut out by $n$ linearly independent conditions. Thus $\ell_{R,\sigma}(w)\geq n$.
\item If $w$ is $\sigma$-conjugate to an element of $W_J$, then $\ell_{R,\sigma}(w)\leq \#(J/\sigma)$ by (b). For the converse, assume that $\ell_{R,\sigma}(w)<\#(\Delta/\sigma)$. It suffices to show that $w$ is $\sigma$-conjugate to an element of $W_J$ for some proper and $\sigma$-stable subset $J\subsetneq \Delta$, as then the claim follows from induction.

The assumption means that there is a non-zero vector $\mu \in X_\ast(T)_{\mathbb Q}^{\sigma\circ w}$. Let $v\in W$ be chosen such that $v^{-1}\mu$ is dominant. Then
\begin{align*}
\sigma^{-1}(v^{-1}\mu) = \sigma^{-1}(v^{-1} \sigma w(\mu)) = \s^{-1} (v)^{-1} wv (v^{-1}\mu)
\end{align*}
is dominant (since $\sigma$ preserves dominant coweights) and in the same $W$-orbit as $v^{-1}\mu$. Hence $v^{-1}\mu = \s^{-1}(v)^{-1} wv (v^{-1}\mu)$, proving that $w' = \s^{-1} (v)^{-1} wv$ lies in $W_J$, where $J = \{\alpha\in \Delta\mid \langle v^{-1}\mu,\alpha\rangle=0\}$ is $\sigma$-stable.
\qedhere
\end{enumerate}
\end{proof}

Recall the following conjugacy result for $\s$-Coxeter elements \cite[Theorem~7.6]{Springer1974}.
\begin{lemma}\label{lem:coxeterElementsConjugate}
Any two $\sigma$-Coxeter elements in $W$ are $\sigma$-conjugate. If $W$ is $\sigma$-irreducible, then the $\sigma$-centralizer of a $\sigma$-Coxeter element $c$ is given by
\begin{align*}
\{w\in W\mid w^{-1}c\sigma(w)= c\} = \langle c\sigma(c)\cdots \sigma^{N-1}(c)\rangle,
\end{align*}
where $N$ is the smallest integer $\geq 1$ such that the action of $\sigma^N$ on the root system $\Phi$ is trivial.\rightqed
\end{lemma}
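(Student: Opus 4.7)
The plan is to reduce to the $\sigma$-irreducible case and then invoke Springer's theory of regular elements applied to the coset $W\sigma$ inside the semidirect product $W\rtimes\langle\sigma\rangle$ acting on $V = X_\ast(T)_{\Gamma_0}\otimes\mathbb{R}$.

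\emph{Step 1: Reduction.} Decompose $\Phi$ into irreducible components and group them into $\sigma$-orbits $\Phi = \Phi^{(1)}\sqcup\cdots\sqcup\Phi^{(k)}$, giving a corresponding product decomposition $W = W^{(1)}\times\cdots\times W^{(k)}$ of $\sigma$-irreducible factors. A $\sigma$-Coxeter element factors as a product of $\sigma$-Coxeter elements in each factor, and the $\sigma$-centralizer factors correspondingly. Moreover $N = \mathrm{lcm}_i(N_i)$ where $N_i$ is the integer in the lemma for $W^{(i)}$, and $c\sigma(c)\cdots\sigma^{N-1}(c)$ restricted to the $i$-th factor is a power of the analogous element, so each claim reduces to the $\sigma$-irreducible case.

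\emph{Step 2: Regularity.} Assume $W$ is $\sigma$-irreducible. The key input is that $c\sigma \in W\rtimes\langle\sigma\rangle$, as an operator on $V_{\mathbb{C}}$, is regular in Springer's sense: it admits an eigenvector $v_0$ lying in the complement of the union of reflection hyperplanes, with eigenvalue a primitive root of unity $\zeta$ whose order equals the order of $c\sigma$. This is the defining feature of twisted Coxeter elements and is the substance of Springer's paper; one constructs $v_0$ explicitly using a generalisation of Kostant's recipe, taking $v_0$ in the $\zeta$-eigenspace to lie in the positive chamber relative to a compatible ordering of simple reflections. The $\zeta$-eigenspace $V(\zeta)$ is then one-dimensional.

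\emph{Step 3: Conjugacy and centralizer.} Springer's uniqueness theorem says that any two regular elements of $W\sigma$ with regular eigenvalue $\zeta$ are $W$-conjugate; applied to two $\sigma$-Coxeter elements $c,c'$, this produces $w\in W$ with $w^{-1}(c\sigma)w = c'\sigma$, i.e.\ $w^{-1}c\sigma(w) = c'$. For the centralizer, Springer's theorem says that $Z_{W\rtimes\langle\sigma\rangle}(c\sigma)$ acts faithfully on $V(\zeta)$, hence is cyclic generated by $c\sigma$ (which scales $V(\zeta)$ by $\zeta$). Intersecting with $W$:
\begin{align*}
\{w\in W\mid w^{-1}c\sigma(w)=c\} = Z_{W\rtimes\langle\sigma\rangle}(c\sigma)\cap W = \langle c\sigma\rangle\cap W.
\end{align*}
A direct computation gives $(c\sigma)^k = c\,\sigma(c)\,\sigma^2(c)\cdots \sigma^{k-1}(c)\cdot \sigma^k$, which lies in $W$ precisely when $N\mid k$. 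Taking $k=N$ yields the generator $c\,\sigma(c)\cdots\sigma^{N-1}(c)$ and concludes the proof.

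\emph{Main obstacle.} The substantive content is Step 2, the identification of $c\sigma$ as a regular element of maximal order; once this is granted, the two claims are formal consequences of Springer's general framework. In practice, since the lemma is a recollection, the cleanest route is simply to cite \cite[Theorem~7.6]{Springer1974} after the trivial reduction of Step 1 and the algebraic identity $(c\sigma)^N = c\,\sigma(c)\cdots\sigma^{N-1}(c)$ in Step 3.
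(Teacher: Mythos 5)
The paper gives no proof of this lemma at all — it is stated as a recollection with a bare citation to Springer, exactly as you suggest in your final paragraph — and your sketch is a faithful reconstruction of Springer's argument (reduction to the $\sigma$-irreducible case, regularity of $c\sigma$, faithful action of the centralizer on the one-dimensional $\zeta$-eigenspace, and the identity $(c\sigma)^N = c\,\sigma(c)\cdots\sigma^{N-1}(c)$). So your proposal is correct and takes essentially the same route as the paper.
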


We can now study the generic Newton point of positive Coxeter type elements.

\begin{proposition}\label{prop:finiteCoxGNP}
Let $(x=w\varepsilon^{\mu},v)$ be a positive Coxeter pair with support $J$. Then the $\l$-invariant of $[b_{x,\max}]$ is given by \begin{align*}\lambda(b_{x,\max} ) = v^{-1}\mu-\wt(v\Rightarrow\s(wv))\in X_\ast(T)_\Gamma.\end{align*}
\end{proposition}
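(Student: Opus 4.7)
The starting point is Theorem~\ref{thm:generic}, which identifies
\begin{align*}
\lambda(b_{x,\max}) = \max_{v' \in \LP(x)}\bigl((v')^{-1}\mu - \wt(v' \Rightarrow \sigma(wv'))\bigr).
\end{align*}
Since $v \in \LP(x)$ by hypothesis, the inequality $\lambda(b_{x,\max}) \geq v^{-1}\mu - \wt(v \Rightarrow \sigma(wv))$ is automatic. The substance of the proposition is the reverse inequality: that every other length-positive element of $x$ gives a weakly smaller value in the formula above.

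The plan is to reduce to the Levi subgroup $M_J$. Write $wv = uz$ with $u \in W^J$ and $z \in W_J$ exactly as in the proof of Proposition~\ref{prop:minNewton}. Then $x$ is a normalized $(J,u,\sigma)$-alcove element, and the element $\tilde x = u^{-1}x\sigma(u) \in \tW_J$ has finite part $\tilde w = z^{-1}v^{-1}\sigma(wv)\sigma(z)$. By Theorem~\ref{thm:Palcove} the map $B(M_J)_{\tilde x} \to B(G)_x$ is a bijection preserving Newton points and hence maximal elements, so one can reformulate the target formula inside $M_J$ and apply Theorem~\ref{thm:generic} there. The translation sends our length-positive element $v$ to an explicit $v'' \in \LP_{M_J}(\tilde x)$, and standard bookkeeping on length functionals (using \cite[Lemma~2.12, Lemma~2.13]{Schremmer2022_newton}, already invoked throughout the paper) shows that the two formulas agree under this identification.

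The core of the argument is an elliptic calculation in $M_J$. The finite part $\tilde w$ is $W_J$-$\sigma$-conjugate to the $\sigma$-Coxeter element $c = v^{-1}\sigma(wv)$ of $W_J$, which has $\sigma$-support equal to all of $J$ and is therefore $\sigma$-elliptic in $W_J$. For each $v'' \in \LP_{M_J}(\tilde x)$ the element $c'' = (v'')^{-1}\sigma(\tilde w v'')$ is again a $\sigma$-Coxeter element of $W_J$ (Lemma~\ref{lem:coxeterElementsConjugate}), admitting a reduced expression $c'' = s_{\alpha_1}\cdots s_{\alpha_n}$ with $\alpha_i \in J$ in pairwise distinct $\sigma$-orbits. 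Along the natural path $v'' \to v''s_{\alpha_1} \to \cdots \to v''c'' = \sigma(\tilde w v'')$ in the quantum Bruhat graph of $\Phi_J$, each edge is either a Bruhat edge (length increases by $1$) or a quantum edge with weight $\alpha_i^\vee$ (length decreases by $1$); here we use that $\langle \alpha_i^\vee, 2\rho_J\rangle = 2$ for simple roots. Combining the lower bound $d_{\QBG}(v'', \sigma(\tilde w v'')) \geq \ell_{R,\sigma}(c'')$ with Lemma~\ref{lem:reflectionLength}(c), the path is shortest and Brenti's lemma determines $\wt_{W_J}(v'' \Rightarrow \sigma(\tilde w v''))$ explicitly as the sum of $\alpha_i^\vee$ over the quantum steps.

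The remaining and main obstacle is to verify that the expression $(v'')^{-1}\tilde\mu - \wt_{W_J}(v'' \Rightarrow \sigma(\tilde w v''))$ is independent of the choice of $v'' \in \LP_{M_J}(\tilde x)$. The strategy is to use Lemma~\ref{lem:coxeterElementsConjugate}: any two $\sigma$-Coxeter elements of $W_J$ are $\sigma$-conjugate, and the $\sigma$-centralizer of a fixed one is the cyclic group generated by a twisted power of the Coxeter element, which acts trivially on $\sigma$-averages of translation parts. One checks that passing between different length-positive $v''$ changes $(v'')^{-1}\tilde\mu$ by a coroot contribution that is exactly compensated by the corresponding change in the quantum-edge contributions to the QBG weight computed above. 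Once uniformity is established inside $M_J$, the maximum in Theorem~\ref{thm:generic} is attained at every length-positive element—in particular at the one corresponding to our $v$—and transferring via Theorem~\ref{thm:Palcove} yields the stated formula.
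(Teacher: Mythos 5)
Your proposal goes off the rails at what you call ``the core of the argument''. You assert that for \emph{every} $v''\in\LP_{M_J}(\tilde x)$ the element $c''=(v'')^{-1}\sigma(\tilde w v'')$ is again a $\sigma$-Coxeter element of $W_J$, citing Lemma~\ref{lem:coxeterElementsConjugate}. That lemma only says that any two $\sigma$-Coxeter elements are $\sigma$-conjugate; it does not say that every $\sigma$-conjugate of a $\sigma$-Coxeter element (in particular, every $(v'')^{-1}\sigma(\tilde w v'')$ with $v''$ length positive) is itself one. The paper's own example immediately after Proposition~\ref{prop:finiteCoxGNP} refutes this: in split type $A_3$ with $x=w=s_1s_2s_3$ and $v=s_2$ one gets $v^{-1}wv=s_2s_1s_2s_3s_2$, which is not a partial Coxeter element even though $v$ minimizes the QBG distance. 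Without $c''$ being a partial $\sigma$-Coxeter element, your reduced-word path and the explicit weight computation for general $v''$ collapse, and the subsequent ``uniformity'' claim --- that $(v'')^{-1}\tilde\mu-\wt(v''\Rightarrow\sigma(\tilde w v''))$ is independent of $v''$, so that the maximum in Theorem~\ref{thm:generic} is attained at \emph{every} length-positive element --- is left entirely unverified (``one checks'') and is exactly the content of the proposition. In addition, the detour through $M_J$ creates a problem you do not address: the quantum Bruhat graph of $\Phi_J$ is not a subgraph of that of $\Phi$ (quantum edges are defined via $\langle\alpha^\vee,2\rho\rangle$ versus $\langle\alpha^\vee,2\rho_J\rangle$), and the maximum in Theorem~\ref{thm:generic} for $G$ runs over all of $\LP(x)\subseteq W$, not only over elements of the coset $vW_J$, so ``standard bookkeeping'' does not transfer the formula.

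The paper's proof avoids all of this and is much shorter: it shows that $v$ \emph{minimizes} $d_{\text{QBG}}(v'\Rightarrow\sigma(wv'))$ over $v'\in\LP(x)$ via the chain
\begin{align*}
d_{\text{QBG}}(v\Rightarrow\sigma(wv))\leq \ell(v^{-1}\sigma(wv))=\ell_{R,\sigma}(v^{-1}\sigma(wv))=\ell_{R,\sigma}((v')^{-1}\sigma(wv'))\leq d_{\text{QBG}}(v'\Rightarrow\sigma(wv')),
\end{align*}
using Lemma~\ref{lem:reflectionLength} (the reflection length is a $\sigma$-conjugation invariant lower bound for the QBG distance, and equals the length precisely for partial $\sigma$-Coxeter elements), and then invokes \cite[Corollary~4.4]{Schremmer2022_newton}, which states that the maximum in Theorem~\ref{thm:generic} is attained at any distance-minimizing length positive element. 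If you want to repair your argument, this is the mechanism you need: you must show that $v$ achieves the maximum, not that all length-positive elements do.
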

\begin{remark}\label{rem:qbgForCoxeter}
In the context of Proposition~\ref{prop:finiteCoxGNP}, the weight function is very easy to compute: Fix a reduced word $v^{-1}\sigma(wv) = s_{\alpha_1}\cdots s_{\alpha_n}$ and note that the $\alpha_i$ are pairwise distinct. We get a path in the quantum Bruhat graph
\begin{align*}
p: v\rightarrow v s_{\alpha_1}\rightarrow \cdots \rightarrow vs_{\alpha_1}\cdots s_{\alpha_n} = \sigma(wv)
\end{align*}
of length $n$ \cite[Lemma~4.3]{Milicevic2020}. The weight of $p$ is the sum of those coroots $\alpha_i^\vee$ where the path is length decreasing, i.e.\ \begin{align*}\ell(vs_{\alpha_1}\cdots s_{\alpha_{i-1}}) > \ell(vs_{\alpha_1}\cdots s_{\alpha_i}).\end{align*}
It follows easily from Lemma~\ref{lem:reflectionLength} that $p$ is a path of minimal length from $v$ to $\sigma(wv)$.
\end{remark}

\begin{proof}[Proof of Proposition~\ref{prop:finiteCoxGNP}]
Let $v'\in \LP(x)$ such that $d_{\text{QWG}}(v'\Rightarrow\s(wv'))$ is minimal among all elements of $\LP(x)$. Then
\begin{align*}
d_{\text{QBG}}(v\Rightarrow\s(wv)) &\leq \ell(v^{-1}\s(wv))= \ell_{R,\sigma}(v^{-1}\s(wv)) \\&= \ell_{R,\sigma}((v')^{-1}\s(wv'))\leq d_{\text{QBG}}(v'\Rightarrow\s(wv')).
\end{align*}

We see that $v$ already minimizes $d(v\Rightarrow\s(wv))$. By \cite[Corollary~4.4]{Schremmer2022_newton}, this proves the claim.
\end{proof}
\begin{example}
If $x=w\varepsilon^\mu$ is of positive Coxeter type and $v\in\LP(x)$ minimizes $d_{\text{QBG}}(v\Rightarrow\s(wv))$, it is in general not to be expected that $v^{-1}\s(wv)$ is a partial $\sigma$-Coxeter element:

Indeed, consider a split group of type $A_3$ and $x=w=s_1s_2s_3\in W\subseteq \tW$. Then $(x,1)$ is a pair of finite Coxeter type. Putting $v=s_2$, we get
\begin{align*}
d_{\text{QBG}}(v\Rightarrow wv) = 3 = d_{\text{QBG}}(1\Rightarrow w),
\end{align*}
which is minimal by the above lemma, but $v^{-1} wv = s_2 s_1 s_2 s_3 s_2$ is not a Coxeter element.
\end{example}

\subsection{The support of positive Coxeter type elements}\label{sec:support}

Let $(x=wt^\mu ,v)$ be a positive Coxeter pair with support $J = \supp(v^{-1}\s(wv))$. The element $v$ and the support $J$ are not unique, i.e., one may find some different $v'\in W$ and $J'\subset \BS$ such that $(x',v')$ is positive Coxeter pair with support $J'$. It is very natural to ask to what extent the support $J$ can vary.
\begin{example}\label{eg:J}
Consider $A_5$ case and let $x = \t_3\cdot s_1$. Then $(x,v=s_3s_4s_2)$ is a positive Coxeter pair with support $J = \{1,2,3,5\}$ while $(x,v'=s_5s_2s_3s_4s_3s_1s_2)$ is a positive Coxeter pair with support $J' = \{1,3,4,5\}$.     
\end{example}

For any $[b]\in B(G)$, recall that $\l(b)$ is the $\l$-invariant of $[b]$ (see \S\ref{sec:3}). Write
$$\nu(b) -\avg_\s(\l(b)) = \sum_{\a\in\Delta} c_{\a}\a^{\vee}   ,$$
with $c_{\a}\in \BQ_{\ge0}$. Set
$$I_1( b)=\{ \a\in\Delta\mid c_{\a}\ne0 \}.$$

\begin{lemma}[{\cite[Lemma 3.8]{Schremmer2022_newton}}]\label{lem:I1JI}
Let $J$ be a $\s$-stable subset of $\BS$. Let $[b]\in B(G)$. Then the followings are equivalent.
\begin{enumerate}[(a)]
\item There exist $\l\in X_*(T)_{\G_0}$ with $\k(\l) = \k(b)$ such that $\nu(b) = \pi_J(\l)$.
\item $I_1(b)\subseteq J \subseteq I(\nu(b))$.
\end{enumerate}
\end{lemma}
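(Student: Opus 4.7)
The plan is to verify each direction of the equivalence by combining the fundamental properties of the projection operator $\pi_J$ with the characterization of $\l(b)$ as a maximal element and the identity $\conv(\l(b)) = \nu(b)$.

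First I would record three basic properties of $\pi_J$ that follow directly from its definition. (i) The image $\pi_J(\l)$ always lies in the $W_J$- and $\s$-invariant subspace of $X_*(T)_{\G_0}\otimes \BR$. (ii) The operator $\pi_J$ annihilates $\BR\Phi_J^\vee$, since the Weyl group $W_J$ acts without nonzero invariants on that subspace (the $W_J$-average of any element of $\BR\Phi_J^\vee$ is zero). (iii) Replacing the input $\l$ by any $W_J$-translate leaves $\pi_J(\l)$ unchanged, so one may always assume $\l$ is $W_J$-antidominant, in which case $\frac{1}{|W_J|}\sum_{w\in W_J} w(\l) \ge \l$ and hence $\pi_J(\l) \ge \avg_\s(\l)$.

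For (b)$\Rightarrow$(a), I take $\l = \l(b)$, lifted along the natural surjection $X_*(T)_{\G_0} \twoheadrightarrow X_*(T)_{\G}$; the Kottwitz condition is automatic. Since $J \subseteq I(\nu(b))$, the Newton point $\nu(b)$ is $W_J$-fixed and $\s$-fixed, so $\pi_J(\nu(b)) = \nu(b)$. On the other hand, the hypothesis $I_1(b) \subseteq J$ together with the definition of $I_1(b)$ forces the difference $\nu(b) - \avg_\s(\l(b)) = \sum_{\a\in I_1(b)} c_\a \a^\vee$ to lie in $\BR\Phi_J^\vee$. Property (ii) then gives $\pi_J(\l(b)) = \pi_J(\avg_\s(\l(b))) = \pi_J(\nu(b)) = \nu(b)$.

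For (a)$\Rightarrow$(b), the inclusion $J \subseteq I(\nu(b))$ is immediate from property (i). For $I_1(b) \subseteq J$, after applying (iii) we may assume $\l$ is $W_J$-antidominant, so $\avg_\s(\l) \le \pi_J(\l) = \nu(b)$ and $\k(\l) = \k(b)$. The maximality of $\l(b)$ then yields $\l \le \l(b)$, i.e.\ $\l(b) - \l$ is a non-negative combination of positive coroots. Since $\pi_J$ sends the positive coroot cone into the dominance cone of the $W_J$- and $\s$-invariant subspace (by (ii), coroots indexed by $J$ are killed, and coroots outside $J$ project to elements that remain positive in the quotient root system $\Phi/\Phi_J$), this gives $\pi_J(\l(b)) \ge \pi_J(\l) = \nu(b)$. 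Combined with the general inequality $\pi_J(\l(b)) \le \conv(\l(b)) = \nu(b)$, we obtain $\pi_J(\l(b)) = \nu(b)$, so $\nu(b) - \avg_\s(\l(b))$ lies in the kernel of $\pi_J$. Since this difference is already a non-negative combination $\sum_{\a\in\Delta} c_\a \a^\vee$ and since the images $\pi_J(\a^\vee)$ for $\a \in \Delta\setminus J$ descend to linearly independent positive simple coroots of the quotient root system, each $c_\a$ with $\a\notin J$ must vanish, giving $I_1(b) \subseteq J$.

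The main obstacle will be the positivity assertion in the last paragraph: that $\pi_J$ preserves the dominance cone and that its restriction to the $\BQ$-span of $\{\a^\vee : \a \in \Delta\setminus J\}$ remains injective after $\s$-averaging. This is a purely combinatorial statement about parabolic projections on root lattices, and once verified (e.g.\ by expanding $\a^\vee$ into its $W_J$-invariant and $\BR\Phi_J^\vee$-components), the rest of the argument is formal manipulation of the properties of $\pi_J$ and the defining extremal properties of $\l(b)$ and $\nu(b)$.
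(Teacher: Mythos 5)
The paper does not prove this lemma; it is imported verbatim from \cite[Lemma~3.8]{Schremmer2022_newton}, so there is no in-paper argument to compare against. Your reconstruction is essentially correct: the direction (b)$\Rightarrow$(a) via $\lambda=\lambda(b)$, and the direction (a)$\Rightarrow$(b) via $W_J$-antidominance, maximality of $\lambda(b)$, and order-preservation of $\pi_J$, is the standard route, and all the properties (i)--(iii) you list hold (note for (i) that $\sigma$-stability of $J$ is what makes the $W_J$-average and $\avg_\sigma$ commute, so $\pi_J(\lambda)=\pi_J(\avg_\sigma\lambda)$, which you use implicitly).

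Two points deserve correction or tightening. First, the step you flag as the ``main obstacle'' is misstated: the map $\mu\mapsto \avg_\sigma(\mu)\bmod \mathbb Q\Phi_J^\vee$ is \emph{not} injective on $\bigoplus_{\alpha\in\Delta\setminus J}\mathbb Q\alpha^\vee$ when a $\sigma$-orbit outside $J$ has more than one element (it kills $\alpha^\vee-\sigma\alpha^\vee$). What you actually need, and what is true, is the cone statement: since $W_J$ permutes $\Phi^+\setminus\Phi_J^+$, one has $\pi_J(\alpha^\vee)\equiv\avg_\sigma(\alpha^\vee)\pmod{\mathbb Q\Phi_J^\vee}$ with $\pi_J(\alpha^\vee)$ a $\mathbb Q_{\geq0}$-combination of positive coroots, and in the quotient $\mathbb Q\Phi^\vee/\mathbb Q\Phi_J^\vee$ the $\sigma$-orbit sums of the classes of $\alpha^\vee$, $\alpha\in\Delta\setminus J$, are linearly independent with non-negative coordinates; hence $\sum_{\alpha\notin J}c_\alpha\pi_J(\alpha^\vee)=0$ with $c_\alpha\geq0$ forces all $c_\alpha=0$. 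Second, you invoke $\conv(\lambda(b))=\nu(b)$ to get $\pi_J(\lambda(b))\leq\nu(b)$; in the source this identity is part of the very same Lemma~3.8, so to avoid any appearance of circularity it is cleaner to derive the inequality directly from $\avg_\sigma(\lambda(b))=\nu(b)-\sum c_\alpha\alpha^\vee$, dominance of $\nu(b)$ (giving $\pi_J(\nu(b))\leq\nu(b)$), and $\pi_J(\alpha^\vee)\geq0$. With these repairs the argument is complete.
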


\begin{proposition}\label{prop:J}
\begin{enumerate}[(a)]
    \item Let $(x ,v)$ be a positive Coxeter pair with support $J$. Then $ I_1(\dot{x})\subseteq J \subseteq I (\nu( \dot{x}) )$.    
\item Let $(x,v)$, $(x,v')$ are two positive Coxeter pairs with support $J$ and $J'$ respectively. Then there is some $n\in W_{I(\nu(\dot x))}$ with $\s(n)=n$ such that $nJn^{-1} = J'$. 
\end{enumerate}
\end{proposition}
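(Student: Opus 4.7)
My plan for (a) is a direct application of Lemma~\ref{lem:I1JI} to the cocharacter $\lambda = v^{-1}\mu \in X_\ast(T)_{\Gamma_0}$. The Kottwitz invariant is $W$-invariant, so $\kappa(\lambda) = \kappa(\mu) = \kappa(\dot x)$. Proposition~\ref{prop:minNewton} gives $\nu(\dot x) = \pi_J(v^{-1}\mu) = \pi_J(\lambda)$, verifying condition (a) of Lemma~\ref{lem:I1JI}; the equivalent condition (b) is exactly the desired inclusion $I_1(\dot x) \subseteq J \subseteq I(\nu(\dot x))$.

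For (b), I would set $w_0 := v^{-1}\sigma(wv)$ and $w_0' := v'^{-1}\sigma(wv')$; these are $\sigma$-Coxeter elements of $W_J$ and $W_{J'}$ respectively. With $u := v^{-1}v' \in W$ an immediate calculation gives $w_0' = u^{-1}w_0\sigma(u)$, so the two elements are $\sigma$-conjugate in $W$. By part (a) both $W_J$ and $W_{J'}$ sit inside $W_{I(\nu(\dot x))}$, and Lemma~\ref{lem:reflectionLength}(a),(c) yields $|J| = \ell_{R,\sigma}(w_0) = \ell_{R,\sigma}(w_0') = |J'|$. The goal is now to construct a $\sigma$-fixed element $n \in W_{I(\nu(\dot x))}$ with $nJn^{-1} = J'$, by modifying $u$ appropriately.

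I would then decompose $W_{I(\nu(\dot x))}$ into $\sigma$-orbits of irreducible factors and reduce to the $\sigma$-irreducible case. Within each such factor, Lemma~\ref{lem:coxeterElementsConjugate} supplies a complete description of the $\sigma$-conjugation between $\sigma$-Coxeter elements and of their $\sigma$-centralizers, which are cyclic and generated by an explicit product of $\sigma$-iterates of the Coxeter element. The main obstacle will be producing a $\sigma$-fixed conjugator from $u$, which is a priori not $\sigma$-invariant: the plan is to right-multiply $u$ by an appropriate power of the cyclic generator of the $\sigma$-centralizer of $w_0'$, chosen so that the product becomes $\sigma$-fixed. Once this is achieved, a final verification shows that conjugation of $J = \supp_\sigma(w_0)$ by the resulting element $n$ gives $\supp_\sigma(w_0') = J'$, since the $\sigma$-support is intrinsic to a $\sigma$-Coxeter element and is preserved under centralizer-type modifications.
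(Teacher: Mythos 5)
Your argument for (a) is correct and is exactly the paper's: Proposition~\ref{prop:minNewton} gives $\nu(\dot x)=\pi_J(v^{-1}\mu)$, and Lemma~\ref{lem:I1JI} converts this into $I_1(\dot x)\subseteq J\subseteq I(\nu(\dot x))$.

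For (b) there are two genuine gaps. First, you never show that $u=v^{-1}v'$ lies in $W_{I(\nu(\dot x))}$; it is a priori only an element of $W$, and no amount of right-multiplication by elements of the $\sigma$-centralizer of $w_0'$ (which sits inside $W_{I(\nu(\dot x))}$) can move $u$ into $W_{I(\nu(\dot x))}$ if it is not there already. The paper obtains this from Lemma~\ref{lem:lengthPositiveForNP}: with $\nu_x=\frac 1N\sum_k(\sigma w)^k\mu$ one has $\nu(\dot x)\in W_Jv^{-1}\nu_x\cap W_{J'}(v')^{-1}\nu_x$, and since both $W_J$ and $W_{J'}$ stabilize $\nu(\dot x)$ by part (a), in fact $\nu(\dot x)=v^{-1}\nu_x=(v')^{-1}\nu_x$, so $v'^{-1}v$ stabilizes the dominant point $\nu(\dot x)$ and hence lies in $W_{I(\nu(\dot x))}$. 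Second, your plan for producing a $\sigma$-fixed conjugator does not go through. Lemma~\ref{lem:coxeterElementsConjugate} describes the $\sigma$-centralizer of a \emph{full} $\sigma$-Coxeter element of a $\sigma$-irreducible $W$; here $w_0$ and $w_0'$ are only \emph{partial} $\sigma$-Coxeter elements of $W_{I(\nu(\dot x))}$ (supported on $J$ resp.\ $J'$, which may be proper subsets), so their centralizers in $W_{I(\nu(\dot x))}$ are not cyclic of the stated form, and the claim that $u\cdot Z_\sigma(w_0')$ meets the $\sigma$-fixed points is precisely the hard content you would need to prove. This is exactly what \cite[Theorem~5.9]{Marquis2020} (a $\sigma$-twisted Lusztig--Spaltenstein statement) provides: once the two partial $\sigma$-Coxeter elements are known to be $\sigma$-conjugate \emph{inside} $W_{I(\nu(\dot x))}$, their supports $J$ and $J'$ are conjugate by a $\sigma$-fixed $n\in W_{I(\nu(\dot x))}$. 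The paper's proof therefore consists of the Newton-point step above followed by a citation of Marquis; without that citation (or a proof of its content) your argument is incomplete.
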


\begin{proof}
\begin{enumerate}[(a)]
\item Apply Proposition \ref{prop:minNewton} to $(x,v)$, we get $\nu(\dot{x}) =  \pi_{J }( v^{-1}\mu)$. By Lemma \ref{lem:I1JI}, we get 
$ I_1(\dot{x})\subseteq J \subseteq I (\nu(\dot{x}) )$.

\item Write $x = w\varepsilon^{\mu}$. By assumption, $v^{-1}\s(wv)$ is a $\s$-Coxeter element of $W_J$ and $v'^{-1}\s(wv')$ is a $\s$-Coxeter element of $W_{J'}$. By \cite[Theorem~5.9]{Marquis2020}, it suffices to prove that $v^{-1}\s(wv)$ and $v'^{-1}\s(wv')$ are $\s$-conjugate in $W_{I(\nu(b))}$. Let $N$ be such that $(\s w)^N=\id \in \tW\rtimes \<\s\>$. Set
$$\nu_x = \frac{1}{N}\sum_{k=1}^{N}(\s w)^k(\mu),$$
Then Lemma~\ref{lem:lengthPositiveForNP} shows $\nu(\dot{x}) \in (W_J v^{-1}\nu_x)\cap (W_{J'} (v')^{-1}\nu_x)$. By (a), both $W_J$ and $W_{J'}$ stabilize $\nu(\dot{x})$. Thus $\nu(\dot{x})=v^{-1}\nu_x = (v')^{-1}\nu_x$.

Thus, $v'^{-1}v \in W_{I(\nu(b))}$. Then $v^{-1}\s(wv)$ and $v'^{-1}\s(wv')$ are $\s$-conjugate in $W_{I(\nu(b))}$.

\qedhere\end{enumerate}
\end{proof}

In Example \ref{eg:J}, we have $I_1(b) = \{1,3,5\} $ and $I(\nu(b)) =\{1,2,3,4,5\}$. We see that both $J$ and $J'$ are between $I_1(b)$ and $I(\nu(b))$. For any $\s$-stable subset $I\subset \BS$, denote by $w_0(I)$ the longest element of $W_I$. Let $n = w_0(\BS) \cdot w_0(J)$, then $nJn^{-1} = J'$.

\begin{remark}
In the proof of Proposition \ref{prop:J} (b), \cite[Theorem~5.9]{Marquis2020} in fact implies that there is a sequence of $\s$-stable subset $J_0 = J(b)$, $J_1,\ldots, J_{r-1}, J_r=J'(b)$ such that $J_i$ and $J_{i+1}$ are conjugate by the element $n_i = w_0(J_i\cup J_{i+1})\cdot w_0(J_i)$ and $n =  n_rn_{r-1}\cdots n_1$ which is a length additive product. This is \emph{Lusztig--Spaltenstein algorithm}, see \cite[Lemma~2.12]{Lusztig1979} and \cite[Proposition~5.5]{Deodhar1982}. We do not use this exact description in the rest of paper.
\end{remark}


\section{Affine Deligne-Lusztig varieties of positive Coxeter type}\label{sec:4}

\subsection{Reduction step}\label{sec:reduction}
We have figured out the generic Newton points of positive Coxeter type elements in \S~\ref{sec:3}. The next step is to study the Deligne-Lusztig reduction of positive Coxeter type elements. It is proved in \cite[\S4.4]{Goertz2015} that the $(J,u,\s)$-alcove condition is preserved under Deligne-Lusztig reduction. We prove that the condition on positive Coxeter type is also preserved under Deligne-Lusztig reduction.

\begin{lemma}\label{lem:reduction1}
Let $(x = w\varepsilon^\mu,v)$ be a positive Coxeter pair. Let $a = (\alpha,k)\in \Delta_\af$ be a simple affine root such that $\ell(r_a x r_{\s (a)}) = \ell(x)$. Then there exists $v'\in W$ such that $(r_a x r_{\s (a)}, v')$ is positive Coxeter pair of support
\begin{align*}
\supp_\s(v^{-1}\s(wv)) = \supp_\s(v'^{-1}\s(s_\alpha ws_{\s\alpha} v')).
\end{align*}
\end{lemma}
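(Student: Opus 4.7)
The plan is to exhibit an explicit candidate $v'$ and verify directly both required conditions: length-positivity for $y := r_a x r_{\sigma a}$, and invariance of the $\sigma$-support. Writing $\alpha := \cl(a)$, the finite part of $y$ is $w' = s_\alpha w s_{\sigma\alpha}$.

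The natural candidate is $v' := s_{\sigma\alpha} v$. Using $\sigma(s_\beta) = s_{\sigma\beta}$ and $s_\beta^2 = 1$, a direct computation gives
\begin{align*}
(v')^{-1}\sigma(w' v') \;=\; v^{-1}\, s_{\sigma\alpha}\, s_{\sigma\alpha}\, \sigma(w)\, \sigma(v) \;=\; v^{-1}\sigma(wv),
\end{align*}
so the $\sigma$-Coxeter element itself, and hence its $\sigma$-support, is preserved automatically. The only remaining task is therefore to establish $v' \in \LP(y)$. Expanding the length functional yields, for every $\gamma \in \Phi^+$, the telescoping formula (given here in the case $k = 0$; the case $a = (-\theta_i, 1)$ introduces only a minor additional translation term)
\begin{align*}
\ell(y, v'\gamma) - \ell(x, v\gamma) = \bigl[\Phi^+(s_{\sigma\alpha} v\gamma) - \Phi^+(v\gamma)\bigr] + \bigl[\Phi^+(w v\gamma) - \Phi^+(s_\alpha w v\gamma)\bigr].
\end{align*}
Each bracket lies in $\{-1, 0, 1\}$, and is negative only at the critical configuration $v\gamma = \sigma\alpha$ with $w\sigma\alpha = -\alpha$. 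Outside this critical configuration, length positivity of $v$ for $x$ directly implies that of $v'$ for $y$.

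The main obstacle is the critical configuration, where the telescoping difference equals $-2$ and length-positivity of $v'$ requires $\ell(x, \sigma\alpha) \geq 2$. In the sub-case where $\ell(x, \sigma\alpha) \in \{0, 1\}$ I would replace the candidate by $v' := v$, trading the clean identity above for $(v')^{-1}\sigma(w'v')$ being the $\sigma$-conjugate of $v^{-1}\sigma(wv)$ by the reflection $s_{v^{-1}\sigma\alpha}$. A careful use of the hypothesis $\ell(y) = \ell(x)$, together with length positivity of $v$ in $\LP(x)$, shows that in this sub-case $v^{-1}\sigma\alpha$ must lie in $\Phi_J$; the $\sigma$-conjugation therefore takes place inside $W_J$, and Lemma~\ref{lem:coxeterElementsConjugate} ensures the result remains a $\sigma$-Coxeter element of $W_J$ with unchanged $\sigma$-support $J$.
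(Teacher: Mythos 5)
Your first candidate $v' = s_{\sigma\alpha}v$ and the identity $(v')^{-1}\sigma(s_\alpha w s_{\sigma\alpha}v') = v^{-1}\sigma(wv)$ are exactly the paper's easy case, and that part is fine: it succeeds precisely when $v^{-1}\sigma\alpha\in\Phi^-$ (so that $s_{\sigma\alpha}v$ stays length positive). The genuine gap is in your fallback for the critical configuration. Taking $v'=v$ gives
\begin{align*}
v^{-1}\sigma(s_\alpha w s_{\sigma\alpha} v) = s_\beta\, c\, \sigma(s_\beta),\qquad \beta := v^{-1}\sigma\alpha,\ c:= v^{-1}\sigma(wv),
\end{align*}
which is only \emph{$\sigma$-conjugate} to the partial $\sigma$-Coxeter element $c$. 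The definition of a positive Coxeter pair requires the element itself to \emph{be} a $\sigma$-Coxeter element of $W_J$, and $\sigma$-conjugation by a reflection does not preserve this property. Lemma~\ref{lem:coxeterElementsConjugate} says all $\sigma$-Coxeter elements are mutually $\sigma$-conjugate; it does not say that everything $\sigma$-conjugate to one is one. The paper's own example following Proposition~\ref{prop:finiteCoxGNP} (type $A_3$, $c=s_1s_2s_3$, conjugation by $s_2$ yielding $s_2s_1s_2s_3s_2$) shows concretely that a reflection-conjugate of a Coxeter element need not be a Coxeter element. So your second sub-case does not produce a positive Coxeter pair, and knowing $\beta\in\Phi_J$ does not repair this.

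The paper's proof closes exactly this gap. In the case $\beta=v^{-1}\sigma\alpha\in\Phi^+$ (which forces $\ell(x,\sigma\alpha)=0$), it chooses a reduced word $c=s_{\alpha_1}\cdots s_{\alpha_\ell}$ adapted to $\supp(s_\beta)$, shows that $\beta = s_{\alpha_1}\cdots s_{\alpha_{i-1}}(\alpha_i)$ with $i$ determined by $(wv)^{-1}\alpha\in\Phi^-$, and then walks $v$ along the word: each $v_k = vs_{\alpha_1}\cdots s_{\alpha_k}$ is shown to remain length positive, and each conjugate $v_k^{-1}\sigma(wv_k)= s_{\alpha_{k+1}}\cdots s_{\alpha_\ell}s_{\sigma\alpha_1}\cdots s_{\sigma\alpha_{k+1}}$ is an honest rearrangement of the Coxeter word, hence still a partial $\sigma$-Coxeter element with unchanged $\sigma$-support. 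After $i-1$ steps one reaches $v_i^{-1}\sigma\alpha = -\alpha_i\in\Phi^-$, and the easy case applies with $v' = s_{\sigma\alpha}v_i = vs_{\alpha_1}\cdots s_{\alpha_{i-1}}$. Some device of this kind is unavoidable; a two-case argument with only $s_{\sigma\alpha}v$ and $v$ as candidates cannot work. (A smaller issue: your claim that the telescoping brackets are negative only when $v\gamma=\sigma\alpha$ \emph{and} $w\sigma\alpha=-\alpha$ is too strong; the second bracket is already $-1$ whenever $v\gamma=\sigma\alpha$, and since $\ell(x,\sigma\alpha)=0$ is forced in that situation, the candidate $s_{\sigma\alpha}v$ fails there regardless of the value of $w\sigma\alpha$. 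Also, for $a=(-\theta_i,1)$ the reflection $s_{\cl(a)}$ is not a simple reflection, so the sign analysis of the brackets is not the "minor" modification you suggest.)
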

\begin{proof}
Without loss of generality, $x\neq r_a x r_{\sigma a}$. Thus exactly one of the two conditions $r_ax < x$ and $xr_{\sigma a}<x$ is true. The argument is entirely symmetric, so let us assume for simplicity that $r_a x <x< xr_{\sigma a}$. We use the criteria from Lemma~\ref{lem:LPsimpleReflections} throughout this proof.

The condition $x<xr_{\sigma a}$ is equivalent to $\ell(x,\sigma\alpha)\leq 0$. If we have $v^{-1}\sigma \alpha\in\Phi^-$, we get $s_{\sigma \alpha} v\in \LP(xr_{\sigma a})\subseteq \LP(r_a x r_{\sigma a})$ \cite[Lemma~2.12]{Schremmer2022_newton}.  We can choose $v' = s_{\sigma \alpha} v$.

Hence let us assume $v^{-1}\sigma \alpha\in \Phi^+$. By length positivity of $v$ and the condition $\ell(x,\sigma \alpha)\leq 0$, we get $\ell(x,\sigma \alpha)=0$. By Lemma~\ref{lem:LPsimpleReflections}, we get $s_{\sigma\alpha} v \in \LP(x)$.

Define $\beta := v^{-1}\sigma \alpha\in \Phi^+$ and $J := \supp(s_\beta)\subseteq \Delta$. We can pick a reduced word
\begin{align*}
v^{-1}\sigma(wv) = s_{\alpha_1}\cdots s_{\alpha_\ell}
\end{align*}
such that $\alpha_1,\dotsc,\alpha_j\in J$ and $s_{\alpha_{j+1}}\cdots s_{\alpha_\ell} \in \prescript J{}{}W$.

The condition $r_ax<x$ implies $(wv)^{-1}\alpha\in \Phi^-$. Thus
\begin{align*}
\left(v^{-1}\s(wv)\right)^{-1} \beta = \sigma((wv)^{-1}\alpha)\in \Phi^-\implies
\beta = s_{\alpha_1}\cdots s_{\alpha_{i-1}}(\alpha_i)
\end{align*}
for a uniquely determined index $i\in\{1,\dotsc,\ell\}$. In fact, we get $i\leq j$ by choice of the reduced word and the index $j$. We may write
\begin{align*}
s_{\beta} = s_{\alpha_1}\cdots s_{\alpha_j}\cdots s_{\alpha_1}.
\end{align*}
Since $J = \supp(s_\beta)$, we see that the above expression must be reduced and $i=j$.

Define elements $v=v_0,\dotsc,v_i\in W$ as $v_k = v s_{\alpha_1}\cdots s_{\alpha_k}$. We claim that each $(x,v_k)$ is a positive Coxeter pair of support $\supp_\sigma(v_k^{-1}\s(wv_k)) = \supp_\sigma(v^{-1}\s(wv))$ for $k=0,\dotsc,i$. The claim for $k=0$ is clear.

In the inductive step, observe that $\ell(x,v_k \alpha_{k+1})\geq 0$ by $v_k\in \LP(x)$. Conversely,
\begin{align*}
\ell(x,v_k\alpha_{k+1}) = -\ell(x,vs_\beta s_{\alpha_1}\cdots s_{\alpha_i}\cdots s_{\alpha_{k+2}}(\alpha_{k+1}))\leq 0,
\end{align*}
since $vs_\beta\in \LP(x)$ and $s_{k+1}\cdots s_i \cdots s_1$ is reduced.

It follows that $v_{k+1}\in \LP(x)$ by \cite[Lemma~2.14]{Schremmer2022_newton}. We have
\begin{align*}
v_{k+1}^{-1}\s(wv_{k+1}) = s_{\alpha_{k+1}} \cdots s_{\alpha_1}(v^{-1}\s(wv)) \s(s_{\alpha_1}\cdots s_{\alpha_{k+1}})
=s_{\alpha_{k+2}}\cdots s_{\alpha_\ell} s_{\sigma\alpha_1}\cdots s_{\sigma\alpha_{k+1}},
\end{align*}
proving that it is a partial $\sigma$-Coxeter element with the same $\sigma$-support as $v^{-1}\s(wv)$. This finishes the induction.

We calculate
\begin{align*}
v_i^{-1}\sigma\alpha &= (vs_{\alpha_1}\cdots s_{\alpha_i})^{-1}\sigma\alpha = s_{\alpha_i}\cdots s_{\alpha_1} v^{-1}\sigma\alpha
\\&= s_{\alpha_i}\cdots s_{\alpha_1}(\beta) = -\alpha_i,
\end{align*}
which is a negative root
. Upon finding a length positive element whose inverse sends $\sigma\alpha$ to a negative root, we may argue as above and see that
we may choose $v' = s_{\sigma \alpha} v_i = v s_{\alpha_1}\cdots s_{\alpha_{i-1}}$.
\end{proof}

\begin{lemma}\label{lem:reduction2}
Suppose $(x = w\varepsilon^\mu,v)$ is a positive Coxeter pair. Let $a = (\alpha,k)\in \Delta_\af$ be a simple affine root such that $\ell(r_a x r_{\sigma a}) = \ell(x)-2$. Pick a reduced word
\begin{align*}
v^{-1}\s(wv) = s_{\alpha_1}\cdots s_{\alpha_\ell}.
\end{align*}
\begin{enumerate}[(a)]
\item There exists a uniquely determined index $i\in\{1,\dotsc,\ell\}$ such that
\begin{align*}
v^{-1}\s(s_\alpha wv) = s_{\alpha_1}\cdots s_{\alpha_{i-1}} s_{\alpha_{i+1}}\cdots s_{\alpha_\ell}.
\end{align*}

\item The pair $(r_a x, v)$ is a positive Coxeter pair with the same generic $\sigma$-conjugacy class $[b_{r_a x,\max}] = [b_{x,\max}]$.

\item The pair $(r_a x r_{\sigma a},s_{\sigma\alpha} v)$ is a positive Coxeter pair. We have
\begin{align*}
\lambda(b_{r_a x r_{\s(a)},\max}) = \lambda(b_{x,\max}) - \alpha_i^\vee\in X_\ast(T)_{\Gamma}
\end{align*}
with $i$ as in (a).
\end{enumerate}
\end{lemma}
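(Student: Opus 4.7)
My plan is to deduce (a) from the Strong Exchange Condition and then bootstrap (b) and (c) from (a) combined with Proposition~\ref{prop:finiteCoxGNP}.

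For (a), the hypothesis $\ell(r_a x r_{\s a}) = \ell(x) - 2$ forces both $r_a x < x$ and $x r_{\s a} < x$. The first inequality, exactly as in the proof of Lemma~\ref{lem:reduction1}, gives $(wv)^{-1}\a \in \Phi^-$. Setting $\b := v^{-1}\s \a$ and $c := v^{-1}\s(wv) = s_{\a_1}\cdots s_{\a_\ell}$, this translates to $c^{-1}\b \in \Phi^-$, i.e.\ $\ell(s_\b c) = \ell(c) - 1$. The Strong Exchange Condition then supplies a unique index $i$ with $s_\b c = s_{\a_1}\cdots \hat s_{\a_i}\cdots s_{\a_\ell}$, and since $s_\b c = (v^{-1}s_{\s\a} v) \cdot v^{-1}\s(wv) = v^{-1}\s(s_\a wv)$, this is the desired identity.

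For (b), the Coxeter property of $v^{-1}\s(s_\a wv) = s_\b c$ is immediate from (a): a subword of a reduced word for a partial $\s$-Coxeter element remains partial $\s$-Coxeter. To obtain $v \in \LP(r_a x)$, I would decompose $x = r_a \cdot r_a x$ as a length-additive product and invoke the left-multiplication analogue of Lemma~\ref{lem:LPsimpleReflections}(a) (cf.\ the remark after that lemma and \cite[Lemma~2.12]{Schremmer2022_newton}). To identify $[b_{r_a x,\max}]$, I apply Proposition~\ref{prop:finiteCoxGNP} to both $(x, v)$ and $(r_a x, v)$, reducing the claim $\l(b_{r_a x,\max}) = \l(b_{x,\max})$ to the QBG weight identity
\begin{align*}
\wt(v \Rightarrow \s(s_\a wv)) - \wt(v \Rightarrow \s(wv)) = k\, v^{-1} w^{-1} \a^\vee.
\end{align*}
The two paths share their first $i - 1$ edges and diverge thereafter by the insertion/deletion of $s_{\a_i}$; I verify the identity via a direct case distinction on whether $\a \in \Delta$ (so $k = 0$) or $\a = -\theta$ for a highest root $\theta$ (so $k = 1$), using the sign data on $\a$ and $(wv)^{-1}\a$.

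For (c), I would factor the reduction as $x \to xr_{\s a} \to r_a x r_{\s a}$. Lemma~\ref{lem:LPsimpleReflections}(a) yields $s_{\s\a} v \in \LP(xr_{\s a})$, and the left-multiplication analogue then gives $s_{\s\a} v \in \LP(r_a x r_{\s a})$. The Coxeter element is genuinely unchanged: $(s_{\s\a} v)^{-1}\s(s_\a w s_{\s\a}\cdot s_{\s\a}v) = v^{-1}\s(wv) = c$. Substituting the explicit translation part of $r_a x r_{\s a}$ into Proposition~\ref{prop:finiteCoxGNP}, the identity $s_{\s\a} v = v s_\b$ lets me relate the QBG path from $s_{\s\a} v$ to $s_{\s\a}\s(wv)$ back to the original path from $v$ to $\s(wv)$; the resulting difference $\l(b_{r_a x r_{\s a},\max}) - \l(b_{x,\max})$ collapses to $-\a_i^\vee$ once the translation contributions are accounted for. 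The main obstacle throughout is this QBG weight bookkeeping: although a reduced word of a partial $\s$-Coxeter element automatically yields a minimal QBG path from any starting vertex, tracking how the Bruhat-versus-quantum type of each edge changes under a shift of the starting vertex or deletion of a single letter is delicate. I expect the cleanest route is an induction on the index $i$, exploiting that the $\a_k$ lie in pairwise distinct $\s$-orbits so that the length variations along the two paths differ by at most one at each vertex.
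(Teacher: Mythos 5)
Your part (a) matches the paper's: both arguments come down to the sign conditions $v^{-1}\s\a\in\Phi^+$ and $(wv)^{-1}\a\in\Phi^-$ (forced by length positivity and $r_ax<x$), after which the exchange condition identifies the unique deleted letter. Part (c) is also the paper's route: $s_{\s\a}v\in\LP(r_axr_{\s a})$ via Lemma~\ref{lem:LPsimpleReflections}, the observation that the Coxeter element is literally unchanged, and then a QBG weight comparison after substituting the translation part of $r_axr_{\s a}$ into Proposition~\ref{prop:finiteCoxGNP}.

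The genuine divergence is in (b). You propose to prove $[b_{r_ax,\max}]=[b_{x,\max}]$ by applying Proposition~\ref{prop:finiteCoxGNP} to both pairs and verifying the identity $\wt(v\Rightarrow\s(s_\a wv))-\wt(v\Rightarrow\s(wv))=k\,v^{-1}w^{-1}\a^\vee$. This identity is in fact correct (it follows from the same weight-shift formula used in (c), together with $v_2^{-1}\s\a=-\a_i$), so your route works — but it costs you a second nontrivial QBG computation. The paper instead invokes a completely soft and general fact: whenever $r_axr_{\s a}<x$, the elements $x$ and $r_ax$ have the same generic $\sigma$-conjugacy class, because the Deligne--Lusztig reduction cuts $\CI x\CI$ into an open piece reducing to $r_ax$ and a closed piece reducing to $r_axr_{\s a}$ (alternatively \cite[Section~2.3]{He2021_generic}). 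That observation needs no positivity or Coxeter hypothesis and no quantum Bruhat graph at all; you may want to adopt it.

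The one substantive step you leave unexecuted is the weight-shift identity itself: how $\wt(u\Rightarrow u')$ changes under left multiplication by $s_{\s\a}$, which you correctly flag as the main obstacle and propose to handle by induction on $i$ with a case distinction on $k$. The paper does not reprove this; it splits the path at $v_1=vs_{\a_1}\cdots s_{\a_{i-1}}$ and $v_2=v_1s_{\a_i}$ and quotes \cite[Lemma~7.7]{Lenart2015} for the two outer segments, computing the single middle edge by hand ($\wt(v_1\Rightarrow v_2)=k\a_i^\vee$, $\wt(s_{\s\a}v_1\Rightarrow s_{\s\a}v_2)=(1-k)\a_i^\vee$). If you do not cite that lemma you must prove its relevant instances, which is more than bookkeeping; with it, your argument closes.
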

\begin{proof}
The condition $\ell(r_a x r_{\sigma a}) = \ell(x)-2$ implies $v\in \LP(r_a x)$ and $s_{\sigma \alpha} v\in \LP(r_a x r_{\sigma a})$ by Lemma~\ref{lem:LPsimpleReflections}.
\begin{enumerate}[(a)]

\item By length positivity, we get $v^{-1}\sigma\alpha\in \Phi^+$ and $(wv)^{-1}\alpha\in \Phi^-$. The claim is immediate.

\item By (a), we see that $v^{-1}\s(s_\alpha wv)$ is a partial $\sigma$-Coxeter element. It is always true that $x$ and $r_a x$ have the same generic $\sigma$-conjugacy class if $r_a x r_{\sigma a}<x$: This can be seen geometrically by observing that the Deligne-Lusztig reduction method from \cite{Goertz2010b} cuts $\mathcal Ix\mathcal I$ into an open and a closed subset, with the open subset reducing to $r_a x$ and the closed subset reducing to $r_a x r_{\sigma a}$. Then the open subset has the same generic $\sigma$-conjugacy class as $\mathcal Ix\mathcal I$ itself. Alternatively, a purely combinatorial proof can be found in \cite[Section~2.3]{He2021_generic}.

\item Since $s_{\sigma \alpha} v\in \LP(r_a x r_{\sigma a})$ and
\begin{align*}
(s_{\sigma \alpha} v)^{-1} \s(s_\alpha w s_{\sigma \alpha}s_{\sigma \alpha} v) = v^{-1}\s(wv),
\end{align*}
we see that $(r_a x r_{\sigma a},s_{\sigma\alpha} v)$ is a positive Coxeter pair. Write
\begin{align*}
r_a x r_{\sigma a} = s_\alpha w s_{\sigma \alpha}\varepsilon^{s_{\sigma\alpha}(\mu + k(w^{-1}\alpha^\vee - \sigma\alpha^\vee))}.
\end{align*}
By Proposition~\ref{prop:finiteCoxGNP}, we get
\begin{align*}
\lambda(b_{x,\max}) =& v^{-1}\mu - \wt(v\Rightarrow\s(wv)),\\
\lambda(b_{[r_a x r_{\sigma a}]}) =& v^{-1}(\mu + k(w^{-1}\alpha^\vee - \sigma\alpha^\vee)) - \wt(s_{\sigma\alpha} v\Rightarrow s_{\sigma \alpha}\s(wv)).
\end{align*}
So it suffices to show
\begin{align}\label{eq:weightComputation1}
\wt(s_{\sigma \alpha}v\Rightarrow s_{\sigma \alpha}\s(wv)) = \wt(v\Rightarrow \s(wv)) + k(\s(wv)^{-1}\sigma\alpha^\vee-v^{-1}\sigma\alpha^\vee) + \alpha_i^\vee.
\end{align}
Put $v_1 := v s_{\alpha_1}\cdots s_{\alpha_{i-1}}$ and $v_2 := v_1 s_{\alpha_i}$, so that 
\begin{align*}
\wt(v\Rightarrow\s(wv))& = \wt(v\Rightarrow v_1) + \wt(v_1\Rightarrow v_2) + \wt(v_2\Rightarrow \s(wv))\\
\wt(s_{\sigma\alpha}v\Rightarrow s_{\sigma\alpha}\s(wv))& = \wt(s_{\sigma\alpha}v\Rightarrow s_{\sigma\alpha}v_1) + \wt(s_{\sigma\alpha}v_1\Rightarrow s_{\sigma\alpha}v_2) \\&\qquad\qquad+ \wt(s_{\sigma\alpha}v_2\Rightarrow s_{\sigma\alpha}\s(wv)).
\end{align*}Observe that $v_1^{-1}\sigma\alpha = \alpha_i\in \Phi^+$ (with $v^{-1}\sigma\alpha\in \Phi^+$ as well) whereas $v_2^{-1}\sigma\alpha =-\alpha_i\in \Phi^-$ (with $\s(wv)^{-1}\alpha\in \Phi^-$ as well).

From \cite[Lemma~7.7]{Lenart2015}, we get
\begin{align}
\wt(s_{\sigma \alpha} v\Rightarrow s_{\sigma\alpha} v_1) &= \wt(v\Rightarrow v_1) + k(v_1^{-1}\sigma\alpha^\vee-v^{-1}\sigma \alpha^\vee),\label{eq:weightComputation2}\\
\wt(s_{\sigma\alpha} v_2\Rightarrow s_{\sigma\alpha} \s(wv)) &= \wt(v_2\Rightarrow \s(wv)) + k( (\s wv)^{-1}\sigma\alpha^\vee-v_2^{-1}\sigma \alpha^\vee).\label{eq:weightComputation3}
\end{align}

Using \eqref{eq:weightComputation2} and \eqref{eq:weightComputation3} to simplify \eqref{eq:weightComputation1}, it suffices to prove
\begin{align}
\wt(s_{\sigma\alpha}v_1\Rightarrow s_{\sigma \alpha} v_2) = \wt(v_1\Rightarrow v_2) + k(v_2^{-1}\sigma \alpha^\vee - v_1^{-1}\sigma\alpha^\vee)-\alpha_i^\vee.\label{eq:weightComputation4}
\end{align}
Observe that $v_1$ and $v_2$ differ only by a multiplication by a simple reflection on the right, hence $d(v_1\Rightarrow v_2)=1$ and $\wt(v_1\Rightarrow v_2) = \alpha_i^\vee \Phi^+(-v_1\alpha_i) = k\alpha_i^\vee$. The same argument allows us to compute $\wt(s_{\sigma\alpha}v_1\Rightarrow s_{\sigma\alpha} v_2) = (1-k)\alpha^\vee$. From the simple observation
\begin{align*}
k(v_2^{-1}\sigma \alpha^\vee - v_1^{-1}\sigma\alpha^\vee) = -2k\alpha_i^\vee,
\end{align*}
we get \eqref{eq:weightComputation4}. This finishes the proof.
\qedhere\end{enumerate}
\end{proof}

\subsection{Main result}\label{sec:main}
In this subsection, we give full description of properties of $X_x(b)$ for positive Coxeter type element $x$. 

We say a subset $E\subseteq B(G)$ is \emph{saturated}, or \emph{has no gap points}, if for any $[b_1]<[b_2]<[b_3]$ in $B(G)$, if $[b_1],[b_3]\in E$ then $[b_2]\in E$.
\begin{theorem}\label{thm:fctConsequences}
Let $(x=w\varepsilon^\mu,v)$ be a positive Coxeter pair with support $J=\supp_\sigma(v^{-1}\s(wv))$. Let $[b]\in B(G)$.
\begin{enumerate}[(a)]
\item We have $X_x(b)\neq\emptyset$ if and only if
\begin{align*}
v^{-1}\mu-\wt(v\Rightarrow\s(wv))-\lambda(b)\in\sum_{\alpha\in J} \mathbb Z_{\geq 0}\alpha^\vee\in X_{\ast}(T)_{\Gamma}.
\end{align*}
In particular, the set $B(G)_x$ is saturated.

Suppose that $X_x(b)\neq\emptyset$ for the remaining statements (b)--(d).
\item The affine Deligne--Lusztig variety $X_x(b)$ is equidimensional of dimension
\begin{align*}
\dim X_x(b) = \frac 12\Bigl(\ell(x)+\ell(v^{-1}\s(wv)) - \langle \nu(b),2\rho\rangle - \defect(b)\Bigr).
\end{align*}
\item Let $\mathcal T$ be a reduction tree of $x$ in the sense of \S2.2. Then there exists a unique path $\underline p$ in $\mathcal T$ such that the endpoint of $\underline p$ lies in $[b]$. In particular, $\BJ_b(F)$ acts transitively on the set of irreducible components of $X_x(b)$. Moreover, $\ell_I(\underline p ) = \sharp \bigl((J - I(\nu(b)))/\<\s\>\bigr)$ and $\ell_{II}(\underline p ) = \< \lambda(b_{x,\max}) - \lambda(b),\rho\> =\frac{1}{2}(\ell(x)-\sharp (J/\<\s\>) -\<\nu(b),2\rho\> +\de(b) ) $.

\item Let $x'$ be the endpoint of the path $\underline p$ in (c). Then there is some $v'\in W$ such that $(x',v')$ is a positive Coxeter pair with support $J(b) = J\cap I(\nu(b))$. Moreover, $\ell(x') = \<\nu(b),2\rho\> + \sharp\bigl((J(b)-I_1(b))/\<\s\>\bigr)$.

\end{enumerate}

\end{theorem}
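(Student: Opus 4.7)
The plan is to induct on $\ell(x)$ via a reduction tree $\mathcal{T}$ of $x$, combining Proposition~\ref{DL-reduction} with the two stability results of \S\ref{sec:reduction}. Lemma~\ref{lem:reduction1} shows that a length-preserving cyclic shift preserves the positive Coxeter property and its support, while Lemma~\ref{lem:reduction2} does the same for length-decreasing reductions and also tracks the evolution of the $\lambda$-invariant: $\lambda(b_{r_ax,\max}) = \lambda(b_{x,\max})$ and $\lambda(b_{r_ax r_{\sigma a},\max}) = \lambda(b_{x,\max}) - \alpha_i^\vee$. For the base case, $x$ is itself of minimal length in $[\dot x]$, so $B(G)_x$ is a singleton. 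Writing $wv = uz$ with $u \in W^J$ and $z \in W_J$, $x$ is a normalized $(J,u,\sigma)$-alcove element; Theorem~\ref{thm:Palcove} transports the question to $M_J$, where the finite part of $\tilde{x} = u^{-1} x \sigma(u) \in \tW_J$ is a $\sigma$-Coxeter element of $W_J$. The resulting geometry is then given by \cite[Theorem~4.8]{He2014} as a classical Coxeter-type Deligne--Lusztig fibration, from which (a)--(d) are immediate.

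For the inductive step at a length-decreasing pair of edges $x \rightharpoonup r_ax$ and $x \rightharpoonup r_ax r_{\sigma a}$, Proposition~\ref{DL-reduction} decomposes $X_x(b) = X_1 \sqcup X_2$ with $X_1$ a $\mathbb G_m$-bundle over $X_{r_ax}(b)$ and $X_2$ an $\mathbb A^1$-bundle over $X_{r_ax r_{\sigma a}}(b)$. Part~(a) follows by expressing the interval for $x$ as the union of the intervals for its two children, using Lemma~\ref{lem:reduction2}(b),(c); saturation passes to such unions because only one coroot $\alpha_i^\vee$ is subtracted at each type-II step. Part~(b) then follows from the inductive dimension formula applied to both summands together with the $\mathbb G_m$- and $\mathbb A^1$-bundle structures. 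Length-preserving edges are handled analogously via Lemma~\ref{lem:reduction1} and the universal homeomorphism of Proposition~\ref{DL-reduction}(1).

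The main obstacle is part~(c), which requires a unique path in $\mathcal T$ whose endpoint lies in $[b]$. A priori $[b]$ could be received by both $r_ax$ and $r_ax r_{\sigma a}$, so we must verify that only one child inherits it. The key observation is that Lemma~\ref{lem:reduction2}(a) pairs the deleted reflection $s_{\alpha_i}$ in the reduced word of $v^{-1}\sigma(wv)$ with the coroot $\alpha_i^\vee$ subtracted from $\lambda(b_{x,\max})$; hence the decomposition $\lambda(b_{x,\max}) - \lambda(b) = \sum_{\alpha \in J} n_\alpha \alpha^\vee$ rigidly prescribes which type-II steps are required, while Proposition~\ref{prop:J} together with $I_1(b) \subseteq J(b) \subseteq I(\nu(b))$ prescribes the type-I deletions. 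This forces uniqueness of $\underline p$ and yields $\ell_{II}(\underline p) = \langle \lambda(b_{x,\max}) - \lambda(b),\rho\rangle$ (each type-II step contributes $\langle \alpha_i^\vee,\rho\rangle = 1$) and $\ell_I(\underline p) = \sharp((J - I(\nu(b)))/\langle \sigma\rangle)$. Transitivity of the $\BJ_b(F)$-action on irreducible components then reduces to the base case via this unique path.

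For part~(d), the endpoint $x' = w'\varepsilon^{\mu'}$ is minimal length in $[b]$ and still of positive Coxeter type with some pair $(x', v')$ by the inductive preservation. Proposition~\ref{prop:J} applied to $(x', v')$ with $[\dot{x'}] = [b]$ forces $I_1(b) \subseteq \supp_\sigma(v'^{-1}\sigma(w'v')) \subseteq I(\nu(b))$, and tracking which $\sigma$-orbits of simple reflections in $J$ are removed along $\underline p$ identifies this support with $J(b) = J \cap I(\nu(b))$. The length formula then follows by combining $\ell(x') = \ell(x) - \ell_I(\underline p) - 2\ell_{II}(\underline p)$ with the formulas from~(c) and the standard identity $\defect(b) = \sharp(I_1(b)/\langle\sigma\rangle)$, which gives exactly $\langle \nu(b), 2\rho\rangle + \sharp((J(b) - I_1(b))/\langle\sigma\rangle)$.
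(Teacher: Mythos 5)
Your overall strategy is exactly the paper's: induction along a reduction tree, with Lemma~\ref{lem:reduction1} and Lemma~\ref{lem:reduction2} preserving the positive Coxeter property and tracking $\lambda(b_{\bullet,\max})$, the base case being minimal length elements, and uniqueness of the path coming from the mutual exclusivity of the two children's non-emptiness criteria (the coefficient of $\alpha_i^\vee$ in $\lambda(b_{x,\max})-\lambda(b)$ being zero or positive). Your treatment of (a), (b) and of uniqueness in (c) is sound in this sense, although the base case is not quite ``immediate'': one still needs the chain of identities through Proposition~\ref{prop:finiteCoxGNP} and the $\lambda$-invariant to get (b), and an argument that no class other than $[b_{x,\max}]$ satisfies the criterion in (a).

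The genuine gap is in the type-I bookkeeping for (c) and (d). You assert that Proposition~\ref{prop:J} together with $I_1(b)\subseteq J(b)\subseteq I(\nu(b))$ ``prescribes the type-I deletions'' and that tracking the removed $\sigma$-orbits identifies the endpoint support with $J\cap I(\nu(b))$; but Proposition~\ref{prop:J} only controls the \emph{minimal} Newton point of the element it is applied to, not the arbitrary class $[b]$. Applied to the endpoint it yields only $J_{\mathrm{end}}\subseteq I(\nu(b))$, hence the inequality $\ell_I(\underline p)\geq \sharp\bigl((J- I(\nu(b)))/\langle\sigma\rangle\bigr)$, not the stated equality. What is actually needed is that at every type-I edge along $\underline p$ the deleted orbit satisfies $\langle\nu(b),\alpha_i\rangle>0$, equivalently $J\cap I(\nu(b)) = J'\cap I(\nu(b))$, and this is the nontrivial step of the paper's inductive argument: one applies Theorem~\ref{thm:Palcove} to $r_ax$ to get $\nu(b)\leq\nu(b_{x,\max})$ with the difference lying in $\mathbb Q_{\geq 0}\Phi_{J'}^\vee$, so that $\langle\nu(b),\alpha_i\rangle\geq\langle\nu(b_{x,\max}),\alpha_i\rangle$, and then rules out $\langle\nu(b_{x,\max}),\alpha_i\rangle=0$ by the convexity computation
\begin{align*}
\nu(b_{x,\max}) = \pi_{I(\nu(b_{x,\max}))}\bigl(\lambda(b_{x,\max})-\alpha_i^\vee\bigr) = \pi_{I(\nu(b_{x,\max}))}\bigl(\lambda(b_{r_a x r_{\sigma a},\max})\bigr)\leq \conv\bigl(\lambda(b_{r_a x r_{\sigma a},\max})\bigr) = \nu(b_{r_a x r_{\sigma a},\max}),
\end{align*}
which would force $\nu(b_{x,\max}) = \nu(b_{r_a x r_{\sigma a},\max})$ and contradict $\lambda(b_{x,\max})\neq\lambda(b_{r_a x r_{\sigma a},\max})$ from Lemma~\ref{lem:reduction2}. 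Without this argument, your formulas $\ell_I(\underline p) = \sharp\bigl((J-I(\nu(b)))/\langle\sigma\rangle\bigr)$ in (c) and the identification of the endpoint support as $J(b)=J\cap I(\nu(b))$ in (d) remain unproven; the rest of (d) (the length formula via $\defect(b)=\sharp(I_1(b)/\langle\sigma\rangle)$) then goes through as you indicate.
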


\begin{proof}[Proof of Theorem~\ref{thm:fctConsequences}]
Induction on $\ell(x)$. If $x$ is of minimal length in its $\sigma$-conjugacy class, we know that $B(G)_x = \{[b_{x,\max}]\}$ by \cite[Theorem~3.5]{He2014}. The reduction tree consists only of the element $x$ itself. For the one element $[b] = [b_{x,\max}]\in B(G)_x$, the identity in (b) can be seen as follows:
\begin{align*}
\dim X_x(b) &= \ell(x) - \langle \nu(b),2\rho\rangle = \ell(x) - \langle \lambda(b),2\rho\rangle - \defect(b) \\&= \ell(x) - \langle v^{-1}\mu-\wt(v\Rightarrow\sigma(wv)),2\rho\rangle - \defect(b) =  d(v\Rightarrow\sigma(wv)) - \defect(b) \\&= \ell(v^{-1}\sigma(wv)) - \defect(b).
\end{align*}
In this calculation, the first identity is \cite[Lemma~3.2]{Milicevic2020}. The second identity is \cite[Proposition~3.9]{Schremmer2022_newton}. The third identity is Proposition~\ref{prop:finiteCoxGNP}. The final two identities are \cite[Lemma~4.4]{Schremmer2022_newton} and Remark~\ref{rem:qbgForCoxeter}. The identity in (b) follows from this calculation.

We next prove that $J\subseteq I(\nu(b))$, as both (c) and (d) follow immediately from this. Since $x$ has minimal length in its $\sigma$-conjugacy class, we get $\nu(b) = \nu(b_{x}) = \nu(\dot{x})$, so this inclusion is Proposition \ref{prop:J} (a).

For $[b] = [b_{x,\max}]$, the condition in (a) is satisfied by Proposition~\ref{prop:finiteCoxGNP}. We still have to prove that no other $[b]\in B(G)$ satisfies the condition in (a). Indeed, any $[b]$ satisfying the condition in (a) satisfies $\lambda(b)\leq v^{-1}\mu-\wt(v\Rightarrow\sigma(wv)) = \lambda(b_{x,\max})$, so $[b]\leq [b_{x,\max}]$. On the other hand, $\nu(b)\geq \pi_J(\nu(b)) = \pi_J(\lambda(b_{x,\max})) = \pi_J(\lambda(\dot x)) = \nu(\dot{x})$.  

We have proved all claims in the case where $x$ has minimal length in its $\sigma$-conjugacy class.
Consider now the case where $x$ is \emph{not} such a minimal length element. Pick a reduction tree as in (c). This means that we are, in particular, given a sequence of elements $x= x_1,\dotsc, x_k$ such that $x_{i+1} = r_{a_i} x_i r_{\sigma a_i}$ for some simple affine root $a_i \in \Delta_\af$ and
\begin{align*}
\ell(x_1)=\cdots=\ell(x_{k-1})>\ell(x_k).
\end{align*}
Now Lemma~\ref{lem:reduction1} allows us to pass the finite Coxeter condition from $x$ to $x_{k-1}$. In other words, we may assume that $k=2$ and $\ell(r_a x r_{\sigma a})<\ell(x)$. Now we apply Lemma~\ref{lem:reduction2} and the inductive claim. Let $\alpha_1,\dotsc,\alpha_\ell$ and $i\in\{1,\dotsc,\ell\}$ as in Lemma~\ref{lem:reduction2}.

In case
\begin{align*}
\lambda(b_{x,\max}) - \lambda(b) \in \sum_{j\neq i}\mathbb Z_{\geq 0}\alpha_j^\vee \subset X_\ast(T)_{\Gamma},\tag{$\ast$}
\end{align*}
we want to show that $X_{r_a x r_{\sigma a}}(b)=\emptyset$ and $X_{r_a x}(b)\neq\emptyset$. Indeed, Lemma~\ref{lem:reduction2} shows that
\begin{align*}
    \lambda(b_{r_a x r_{\sigma a},\max}) = \lambda(b_{x,\max})-\alpha_i^\vee\underset{(\ast)}{\not\geq} \lambda(b),
\end{align*}
so that $[b]\not\leq [b_{r_a x r_{\sigma a},\max}]$. Hence $X_{r_a x r_{\sigma a}}(b)=\emptyset$.
The Deligne--Lusztig reduction method shows that $[b_{x,\max}] = [b_{r_a x,\max}]$. Moreover, Lemma~\ref{lem:reduction2} yields that $(r_a x, v)$ is a positive Coxeter pair of support $J' = J\setminus \sigma^{\mathbb Z}(\alpha_i)$. By $(\ast)$, we know that $\lambda(b_{r_a x,\max}) - \lambda(b) \in \sum_{\alpha\in J'} \mathbb Z_{\geq 0}\alpha^\vee$. Hence the inductive assumption shows $X_{r_a x}(b)\neq\emptyset$ as claimed.

Thus, any path in the reduction tree of $x$ ending with a minimal length element for $[b]$ must start with $x\rightarrow r_a x$. By the induction assumption, there exists exactly one such path, proving uniqueness in (c). Since $\ell(r_a x) = \ell(x)-1$, the statement (b) follows for $x$. In order to see (c) and (d), it remains to prove $J\cap I(\nu(b)) = J'\cap I(\nu(b))$. In other words, we have to show $\langle \nu(b),\alpha_i\rangle>0$. Applying Theorem~\ref{thm:Palcove} to the element $r_a x$, we see that $\nu(b)\leq \nu(b_{x,\max})$ and $\nu(b)\equiv \nu(b_{x,\max})\pmod{J'}$. 
Hence, we get $\langle \nu(b),\alpha_i\rangle \geq \langle \nu(b_{x,\max}),\alpha_i\rangle$. So we only have
to show $\langle \nu(b_{x,\max}),\alpha_i\rangle>0$.

Suppose that instead $\langle \nu(b_{x,\max}),\alpha_i\rangle=0$. Then we get
\begin{align*}
\nu(b_{x,\max}) &= \pi_{I(\nu(b_{x,\max}))}(\lambda(b_{x,\max})) = \pi_{I(\nu(b_{x,\max}))}(\lambda(b_{x,\max})-\alpha_i^\vee)
\\&= \pi_{I(\nu(b_{x,\max}))}(\lambda(b_{x,\max})-\alpha_i^\vee)\underset{\text{L\ref{lem:reduction2}}} = \pi_{I(\nu(b_{x,\max}))}(\lambda(b_{r_a x r_{\sigma a},\max}))\\& \leq \conv(\lambda(b_{r_a x r_{\sigma a},\max})) = \nu(b_{r_a x r_{\sigma a},\max}).
\end{align*}
Thus we would get $\nu(b_{x,\max}) = \nu(b_{r_a x r_{\sigma a},\max})$, contradicting the fact $\lambda(b_{x,\max})\neq \lambda(b_{r_a x r_{\sigma a},\max})$ from Lemma~\ref{lem:reduction2}. This contradiction shows $\langle \nu(b_{x,\max}),\alpha_i\rangle>0$, finishing the first case.

Let us now consider the case
\begin{align*}
\lambda(b_{x,\max}) - \lambda(b) \in \alpha_i + \sum_{j}\mathbb Z_{\geq 0}\alpha_j^\vee \subset X_\ast(T)_{\Gamma}.
\end{align*}
We obtain $X_{r_a x r_{\sigma a}}(b)\neq\emptyset = X_{r_a x}(b)$ by induction and Lemma~\ref{lem:reduction2}, arguing just as above. One proves statements (b) -- (d) as in the previous case.

Finally, in case
\begin{align*}
\lambda(b_{x,\max}) - \lambda(b) \notin \sum_{j}\mathbb Z_{\geq 0}\alpha_j^\vee \subset X_\ast(T)_{\Gamma},
\end{align*}
we get $X_{r_a x r_{\sigma a}}(b) = \emptyset = X_{r_a x}(b)$ by induction. This finishes the induction and the proof.
\end{proof}

\subsection{Very special parahoric subgroup}\label{sec:veryspecial}
Let $H$ be a (not necessarily quasi-split) connected reductive algebraic group over $F$. Let $\sigma_H$ be its Frobenius automorphism and $\mathbb S_{\af}$ the set of simple affine reflections of $H$, using the same setup as in \S\ref{sec:preliminary}. A $\sigma_H$-stable subset $K\subseteq \mathbb S_{\af}$ is called very special (with respect to $\sigma_H$), if the parabolic subgroup $W_K$ is finite and the longest element of $W_K$ is of maximal length among all such $\sigma_H$-stable subsets of $\mathbb S_{\af}$. If $K$ is a very special subset, then we call the standard parahoric subgroup of $H(F)$ corresponding to $K$ is very special. \cite[Proposition~2.2.5]{He2021_stabilizers} proves that $K$ is very special subset if and only if the corresponding standard parahoric subgroup of $H(F)$ is of maximal volume among all the parahoric subgroups of $H(F)$.

The following result is proved in \cite[Theorem~1.1]{He2012}. Recall the notation $\mathcal O_{\min}$ from \S\ref{sec:1.3}. We write $W_a\subseteq \widetilde W$ for the affine Weyl group $W_a = W\ltimes \mathbb Z\Phi^\vee$, i.e.\ the subgroup generated by all affine reflections.
\begin{theorem}\label{thm:HY}
    Let $\tau$ be a length zero element of $\widetilde W$ and $\mathcal{O}\subset W_a\tau$ be a $W_a$-$\sigma$-conjugacy class of $\widetilde W$ such that the classical part $\cl(\CO)$ contains some $\s$-Coxeter element of $W$. Then there is a unique very special parahoric subset $K\subset \mathbb S_{\af}$ with respect to $\tau \circ \sigma$, such that $\tau^{-1} \mathcal{O}_{\min}$ equals to the set of $\text{Ad}(\tau)\circ \sigma$-Coxeter elements of $W_K$.\rightqed
\end{theorem}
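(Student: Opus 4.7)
The first move is to absorb $\tau$ into the Frobenius: the bijection $W_a \to W_a\tau$, $w \mapsto w\tau$, preserves length and intertwines $W_a$-$\sigma$-conjugacy on the right with $W_a$-$\sigma'$-conjugacy on the left, where $\sigma' := \Ad(\tau)\circ\sigma$. I may and do assume $\tau = 1$, so $\mathcal{O}$ becomes a $\sigma$-conjugacy class inside $W_a$ whose image $\cl(\mathcal{O}) \subset W$ contains a $\sigma$-Coxeter element $c$. I next show that $\mathcal{O}$ is basic with $\nu(\mathcal{O}) = 0$: Lemma~\ref{lem:reflectionLength}(c,d) applied to $c$ gives $\ell_{R,\sigma}(c) = \sharp(\Delta/\sigma)$, so $\sigma c$ has no non-zero fixed vector on the real span of $\mathbb{Z}\Phi^\vee$. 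For any lift $x = c\varepsilon^\mu \in \mathcal{O}$ and any $v \in \LP(x)$, Lemma~\ref{lem:lengthPositiveForNP}(a) places $\nu(\dot x)$ in the $W_J$-orbit of $v^{-1}\tfrac{1}{N}\sum_{k=1}^N(\sigma c)^k\mu$; this sum is $(\sigma c)$-invariant and lives in $\mathbb{Z}\Phi^\vee\otimes\mathbb{R}$, hence vanishes, so $\nu(\dot x) = 0$.

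With $\mathcal{O}$ basic, I would invoke the structure theorem of \cite{He2014b}: every minimal length element of $\mathcal{O}$ is strongly $\sigma$-conjugate to a product $u\cdot x_{\mathrm{str}}$ with $x_{\mathrm{str}}$ $\sigma$-straight of length zero and $u$ $\sigma$-elliptic in a finite $\sigma$-stable parabolic $W_K \subset \tW$ that commutes with $x_{\mathrm{str}}$. Since $\nu(\mathcal{O}) = 0$ and $\tau = 1$, the straight part $x_{\mathrm{str}}$ is trivial, and every minimal length element of $\mathcal{O}$ is $\sigma$-conjugate to a $\sigma$-elliptic $u \in W_K$. This produces the candidate subset $K \subset \tilde S$.

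The core of the proof is identifying $K$ as very special and the elements $u$ as $\sigma$-Coxeter elements of $W_K$. Since each $u$ is $\tW$-$\sigma$-conjugate to $c$, Lemma~\ref{lem:reflectionLength}(a,c) gives $\ell_{R,\sigma}(u) = \ell_{R,\sigma}(c) = \sharp(\Delta/\sigma)$. Combining this with $\sigma$-ellipticity of $u$ in $W_K$ and Lemma~\ref{lem:reflectionLength}(d) applied inside $W_K$ forces $\sharp(K/\sigma) = \sharp(\Delta/\sigma)$ --- the maximum possible value --- so $W_K$ is a finite $\sigma$-stable sub-parabolic of $W_a$ of maximal rank, and $K$ is very special. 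Lemma~\ref{lem:reflectionLength}(c) then yields $\ell(u) = \ell_{R,\sigma}(u) = \sharp(K/\sigma)$, making $u$ a $\sigma$-Coxeter element of $W_K$. Conversely, Lemma~\ref{lem:coxeterElementsConjugate} shows all $\sigma$-Coxeter elements of $W_K$ are mutually $\sigma$-conjugate, so they all lie in $\mathcal{O}_{\min}$; uniqueness of $K$ follows from the same lemma together with the numerical rigidity just established.

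The main obstacle is this last reflection-length transfer: $\ell_{R,\sigma}$ is defined via the action on $V = X_*(T)_{\Gamma_0}\otimes\mathbb{R}$, whereas $W_K$ is a sub-parabolic of $W_a$ attached to a possibly non-origin vertex of the affine Dynkin diagram. One must verify carefully that $\sigma$-ellipticity of $u$ in $W_K$, combined with $u$ being in the $\tW$-$\sigma$-conjugacy class of a $\sigma$-Coxeter element of $W$, really does force the numerical equality $\sharp(K/\sigma) = \sharp(\Delta/\sigma)$ that pins down $K$ as very special rather than as some merely proper finite sub-parabolic.
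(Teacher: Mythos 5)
The paper itself gives no proof of this statement: it is quoted from He--Yang \cite[Theorem~1.1]{He2012}, and as the remark following it acknowledges, the identification of $K$ there is carried out by explicit case-by-case computation. Your attempt at a uniform argument is therefore a genuinely different route, but it has two real gaps. The first is the inference from $\sharp(K/\sigma)=\sharp(\Delta/\sigma)$ to ``$K$ is very special.'' Very special is defined by maximality of the length of the longest element of $W_K$ (equivalently, by \cite[Proposition~2.2.5]{He2021_stabilizers}, maximal parahoric volume), not by maximality of the number of $\sigma$-orbits. These conditions genuinely differ: for split $C_2$ with $\sigma=\id$, the subsets $\{0,2\}$ and $\{1,2\}$ of $\Delta_\af$ both have two elements, but $W_{\{0,2\}}\cong A_1\times A_1$ has longest element of length $2$ while $W_{\{1,2\}}\cong C_2$ has longest element of length $4$, so only the latter is very special. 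Your numerical rigidity pins down the rank of $K$ but not its type, which is exactly what distinguishes the very special subset from the other maximal-rank spherical ones.

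The second gap concerns the ``equals'' and ``unique'' clauses. The He--Nie decomposition gives that each minimal length element of $\mathcal O$ is \emph{strongly $\sigma$-conjugate} to an elliptic element of some finite standard parabolic $W_K$; it does not give that the minimal length element itself lies in $W_K$, nor that one and the same $K$ serves all minimal length elements. Strong conjugation by simple reflections outside $K$ can move an element out of $W_K$, so a priori $\mathcal O_{\min}$ could meet several distinct maximal-rank finite parabolics, and Lemma~\ref{lem:coxeterElementsConjugate} only compares Coxeter elements inside a single fixed $W_K$, so it cannot rule this out; your appeal to it for uniqueness is circular. Closing precisely these two gaps is where \cite{He2012} resorts to classification. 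For what it is worth, the parts you were worried about are fine: the reduction to $\tau=1$, the computation $\nu(\mathcal O)=0$ via Lemma~\ref{lem:reflectionLength} and Lemma~\ref{lem:lengthPositiveForNP}, and the transfer of $\ell_{R,\sigma}$ through the Coxeter-group isomorphism $\cl\colon W_{\af,K}\to W_{\cl(K)}$ all work, the last exactly as in the proof of Lemma~\ref{lem:minimalElements}.
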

\begin{remark}
    If we study instead $\sigma$-conjugacy classes in $\tW$, the corresponding subgroup $ W_K$ is unique only up to conjugation by length zero elements in $\tW$. If moreover $\mathbf G$ is absolutely simple, this subgroup $\tW_K$ is computed explicitly, in a case-by-case fashion, in \cite{He2012}. The quasi-simple case is easily reduced to absolute simple case.
\end{remark}

\begin{example}
    (1) Assume $H$ is of type $C_2$ and $\sigma_H$ is the order 2 automorphism. Then $\{0,2\}$ is a very special subset and $\{2\}$ is not. The element $\tau_2s_2 = \varepsilon^{\omega^{\vee}_2}s_{21}$ has finite Coxeter part, while the element $\tau_2s_1 = \varepsilon^{\omega^{\vee}_2}s_{2121}$ does not.

    (2) Assume $H$ is of type $E_6$ and $\sigma_H$ is the order 3 automorphism. Then $\{2,3,4,5\}$ is the unique very special subset.
\end{example}

Let $(x=w\varepsilon^\mu,v)$ be a positive Coxeter pair with support $J=\supp_\sigma(v^{-1}\s(wv))$. Let $[b]\in B(G)_x$. We can now give a more specific description of the geometry of $X_x(b)$.

Let $\CT$ be a reduction tree of $x$ and let $x' = w' \varepsilon^{\mu'}$ be the unique endpoint of $\CT$ such that $x'\in [b]$ as in Theorem \ref{thm:fctConsequences} (c). By \S\ref{sec:1.4} and Theorem \ref{thm:fctConsequences}, we get that $X_x(b) $ is an iterated fibration over $X_{x'}(b)$ of type $(\ell_I(\underline p ) , \ell_{II}(\underline p ))$. Let $v'\in W$ such that $(x',v')$ is a positive Coxeter pair with support $J(b) = J\cap I(\nu(b))$. Let $u'\in W^{J(b)}$ such that $w'v' \in u'W_{J(b)} $. By \S\ref{sec:Palcove}, we see that $x'$ is a normalized $(J(b),u',\s)$-alcove element. Let $\widetilde{x}' = u'^{-1}x'\s(u')$ and write $M=M_{J(b)}$. Direct computation shows that $u'^{-1}w\s(u')$ is $W_{J(b)}$-$\s$-conjugate to the $\s$-Coxeter element $v'^{-1}\s(w'v')$ of $W_{J(b)}$. By \cite[Proposition~4.5]{He2015b}, $\widetilde{x}'$ is of minimal length element in its $\sigma$-conjugacy class in $\widetilde{W}_{J(b)}$. Hence by Theorem \ref{thm:HY}, there is a length zero element $\t$ of $\tW_{J(b)}$ and a very special parahoric subset $K$ of $J(b)_{\af}$ with respect to $ \tau \circ \sigma$, and a $\tau\circ \s$-Coxeter element $c_K$, such that $\widetilde{x}' =c_K\cdot \tau $.

By Theorem \ref{thm:Palcove}, the natural map $B(M)_{\tilde{x}'}\rightarrow B(G)_{x'}$ is bijective and for any $[b']\in B(M)_{\tilde{x}'} $, the $M$-dominant Newton point $\nu_{M}(b')$ coincides with $\nu(b')$. Therefore, we may assume $[b]\in  M (\breve F)$.

By \cite[Theorem 2.1.4 (b)]{Goertz2010} and \cite[Theorem 3.3.1]{Goertz2015}, we have 
$$\mathbb{J}_b(F) \backslash X_{x'}(b)\cong \mathbb {J}^{M}_b(F)\backslash X^{M}_{\widetilde{x}'}(b) \cong \mathbb {J}^{M}_{\dot{\t}}(F)\backslash X^{M}_{\widetilde{x}'}(\dot{\t}).$$
Following \cite[Theorem 4.8]{He2014}, we get that 
$$X^{M}_{\widetilde{x}'}(\dot{\t}) \cong \mathbb{J}_{\dot{\t}}^{M}(F) \times^{\mathbb{J}_{\dot{\t}}^{M}(F)\cap \CP }X',$$
where $\CP$ is the standard parahoric subgroup of $M(\breve F)$ corresponding to $K$ and then $\mathbb{J}_{\dot{\t}}^{M}(F)\cap \CP$ is the very special parahoric subgroup of $\mathbb{J}_{\dot{\t}} ^{M}(F)$ corresponding to $K$, and 
$$X' = \{ g\in \CP/\CI \mid g^{-1}\dot{\t}\s(g)\in \CI c_K\t\CI\}.$$
Note that $X'$ is isomorphic to 
$$X_{c_K}^{\overline{\CP}} = \{  \overline{g}\in \overline{\CP}/\overline{\CI} \mid \overline{g}^{-1} (\Ad(\dot{\t})\circ\s)(\overline{g})\in \overline{\CI} c_K \overline{\CI} \},$$
the classical Deligne-Lusztig variety in the reductive quotient $\overline{\CP}$ of $\CP$ corresponding to $c_K\in W_K$. Hence we get
$$\mathbb{J}_b(F) \backslash X_{x'}(b)\cong \mathbb{J}_{\dot{\t}}^{M}(F)\cap \CP  \backslash X'\cong \overline{\CP}^{\Ad(\dot{\t})\s}\backslash X^{\overline{\CP}}_{c_K}.$$

Finally, we point out that the element $\t\in \tW_{J(b)}$ is uniquely determined (up to $\tW_{J(b)}$-$\s$-conjugacy) by the property 
\begin{itemize}
\item $\k(\t)=\k(b)$;
\item the $M$-dominant Newton point $\nu^{M}(\dot{\t})$ equal $\nu(b)$.
\end{itemize}

In summary, we have proven the following

\begin{corollary}\label{cor:geo}
Let $(x=wt^\mu,v)$ be a positive Coxeter pair with support $J$. Let $[b]\in B(G)_x$. Let $J(b)=J\cap I(\nu(b))$ and $M = M_{J(b)}$. Then there is a unique (up to $\tW_{J(b)}$-$\s$-conjugacy) length zero element $\t$ of $\tW_{J(b)}$ such that $\k(\dot{\t}) = \k(b)$ and $\nu_{M}(\dot{\t}) =\nu(b)$.

Moreover, there is a very special parahoric subset $K$ of $J(b)_{\af}$ with respect to $\t\circ\s$, such that 
$\mathbb{J}_b(F) \backslash X_x(b)$ is universally homeomorphic to an iterated fibration (of type $(\ell_I , \ell_{II} )$ as in Theorem \ref{thm:fctConsequences} (c)) over $\overline{\CP}^{\Ad(\dot\t)\s}\backslash X^{\overline{\CP}}_{c_K}$. Here, $\CP$ is the standard parahoric subgroup of $M(\breve F)$ corresponding to $K$ and $\overline{\CP} $ its reductive quotient, and $X^{\overline{\CP}}_{c_K}$ a classical Deligne-Lusztig variety in ${\overline{\CP}}$ of Coxeter type. \rightqed
\end{corollary}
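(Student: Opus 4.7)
The plan is to combine three tools already developed in the paper: the Deligne--Lusztig reduction tree from Theorem~\ref{thm:fctConsequences}, the passage from $G$ to a Levi via normalized $(J,u,\s)$-alcove elements (Theorem~\ref{thm:Palcove}), and the He--Yu structure theorem for minimal length $\s$-conjugacy classes whose finite part is $\s$-Coxeter (Theorem~\ref{thm:HY}). The skeleton is: first reduce $X_x(b)$ to the endpoint variety $X_{x'}(b)$ of the reduction path, then transport $x'$ into the Iwahori--Weyl group of the Levi $M = M_{J(b)}$, and finally recognize the resulting element as a $(\tau\circ\s)$-Coxeter element times a length-zero element attached to a very special parahoric of $M$.

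First, Theorem~\ref{thm:fctConsequences} (c) yields a path $\underline p$ in the reduction tree $\mathcal T$ whose endpoint $x'$ lies in $[b]$, such that $X_x(b)$ is $\BJ_b(F)$-equivariantly universally homeomorphic to an iterated fibration of type $(\ell_I(\underline p),\ell_{II}(\underline p))$ over $X_{x'}(b)$. By Theorem~\ref{thm:fctConsequences} (d), there exists $v'\in W$ for which $(x',v')$ is a positive Coxeter pair with support $J(b)$. Writing $x' = w'\varepsilon^{\mu'}$ and $w'v' = u'z'$ with $u'\in W^{J(b)}$ and $z'\in W_{J(b)}$, the discussion in \S\ref{sec:Palcove} shows that $x'$ is a normalized $(J(b),u',\s)$-alcove element. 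Theorem~\ref{thm:Palcove} then delivers a bijection $B(M)_{\tilde x'}\xrightarrow{\sim} B(G)_{x'}$ matching dominant Newton points, where $\tilde x' = u'^{-1}x'\s(u')\in\tW_{J(b)}$; combined with \cite[Theorem~2.1.4]{Goertz2010} and \cite[Theorem~3.3.1]{Goertz2015}, this gives a canonical homeomorphism
\begin{align*}
\BJ_b(F)\backslash X_{x'}(b)\cong \BJ_b^M(F)\backslash X^M_{\tilde x'}(b).
\end{align*}

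The next step is to show $\tilde x'$ has minimal length in its $\s$-conjugacy class in $\tW_{J(b)}$. A direct computation identifies $u'^{-1}w'\s(u')$ with a $W_{J(b)}$-$\s$-conjugate of the $\s$-Coxeter element $v'^{-1}\s(w'v')\in W_{J(b)}$; by \cite[Proposition~4.5]{He2015b}, minimality follows. Applying Theorem~\ref{thm:HY} to the $W_\af$-$\s$-conjugacy class of $\tilde x'$ produces a length-zero element $\tau\in\tW_{J(b)}$ and a very special parahoric subset $K\subseteq J(b)_\af$ (with respect to $\tau\circ\s$) such that $\tilde x' = c_K\tau$ for a $(\tau\circ\s)$-Coxeter element $c_K$ of $W_K$. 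Then \cite[Theorem~4.8]{He2014} identifies $X^M_{\tilde x'}(\dot\tau)$ with $\BJ_{\dot\tau}^M(F)\times^{\BJ_{\dot\tau}^M(F)\cap\CP} X^{\overline{\CP}}_{c_K}$, and passing through the Kottwitz classification, which gives $[b]=[\dot\tau]$ in $B(M)_{\tilde x'}$ at the level of minimal length representatives, this yields the announced iterated fibration after quotienting by $\BJ_b(F)$.

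Finally, the uniqueness of $\tau$ up to $\tW_{J(b)}$-$\s$-conjugacy is immediate from the Kottwitz classification: a length-zero element in $\tW_{J(b)}$ is determined by its Kottwitz point and its $M$-dominant Newton point, and we impose $\k(\dot\tau)=\k(b)$ and $\nu_M(\dot\tau)=\nu(b)$. The main obstacle is the consistent bookkeeping across the three reductions: checking that $\tilde x'$ really is of minimal length in $\tW_{J(b)}$ and that the very special parahoric $K$ is intrinsic (independent of the auxiliary choices of reduction tree, of $v'$, and of the decomposition $w'v'=u'z'$). Both are ultimately controlled by the rigidity of $\s$-Coxeter elements in $W_{J(b)}$ provided by Lemma~\ref{lem:coxeterElementsConjugate} and Proposition~\ref{prop:J}, which make the combinatorial data canonical in exactly the sense asserted.
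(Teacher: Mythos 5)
Your proposal is correct and follows essentially the same route as the paper's own argument in \S\ref{sec:veryspecial}: reduction to the endpoint $x'$ via Theorem~\ref{thm:fctConsequences}, passage to the Levi $M_{J(b)}$ via the normalized alcove property and Theorem~\ref{thm:Palcove}, minimality of $\tilde x'$ via \cite[Proposition~4.5]{He2015b}, and then Theorem~\ref{thm:HY} together with \cite[Theorem~4.8]{He2014} to produce $\tau$, $K$, $c_K$ and the classical Deligne--Lusztig variety, with uniqueness of $\tau$ from the Kottwitz classification. No substantive differences.
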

Finally, we point out that $\overline{\CP}^{\Ad(\dot\t)\s}\backslash X^{\overline{\CP}}_{c_K}$ is quasi-isomorphic to the orbit space of a $\sharp K/\<\t\s\>$-dimensional affine space by an action of a finite torus (see \cite[4.3 (a)]{HL12}).

\subsection{Minimal length case}\label{sec:minimallengthcase}

Let $x = w\varepsilon^{\mu}\in\tW$. It is very natural to ask, when is $x$ a positive Coxeter type element? This is a difficult problem in general. One obvious necessary condition is that $w$ must be $\s$-conjugate to a partial $\s$-Coxeter element.
We have the following result for minimal length elements.
\begin{theorem}\label{thm:coxeterConjugacyClasses}
Let $x=w\varepsilon^\mu\in\tW$ be of minimal length in its $\sigma$-conjugacy class. Then the following are equivalent:
\begin{enumerate}[(a)]
\item The element $x$ is of positive Coxeter type.
\item The element $w$ is $\sigma$-conjugate, inside $W$, to a partial $\sigma$-Coxeter element. 
\end{enumerate}
\end{theorem}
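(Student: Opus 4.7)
The direction (a) $\Rightarrow$ (b) is essentially tautological: if $(x,v)$ is a positive Coxeter pair, then by definition $v^{-1}\sigma(wv)$ is a partial $\sigma$-Coxeter element of $W$, and setting $g := \sigma^{-1}(v)$ gives $g^{-1}w\sigma(g) = \sigma^{-1}(v^{-1}\sigma(wv))$. Since $\sigma^{-1}$ permutes simple reflections and preserves the partial $\sigma$-Coxeter property, $w$ is $\sigma$-conjugate in $W$ to a partial $\sigma$-Coxeter element.

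For the implication (b) $\Rightarrow$ (a), my plan is to reduce to a convenient normal form and construct $v$ explicitly there. The first step is to observe that both the conclusion (a) and the hypothesis (b) are invariant under the equivalence $\approx_\sigma$ on the set of minimal length elements of $[\dot x]$. Indeed, invariance of (a) follows by applying Lemma~\ref{lem:reduction1} at each length-preserving cyclic shift, while invariance of (b) follows because the $\sigma$-conjugacy class of the finite part $w$ in $W$ only depends on $[\dot x]_\sigma$ up to a choice of minimal length representative. Therefore it suffices to prove the implication for a single conveniently chosen representative.

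The second step is to invoke He's structure theorem on minimal length elements \cite{He2014}, which allows us to replace $x$ (up to $\approx_\sigma$) by an element of the form $x = u\tau$, where $\tau \in \widetilde W$ is of length zero, $K \subseteq \tilde S$ is a $(\tau\circ\sigma)$-stable subset with $W_K$ finite, and $u \in W_K$ is of minimal length in its $(\tau\circ\sigma)$-conjugacy class in $W_K$. By Lemma~\ref{lem:reflectionLength}(d) and hypothesis (b), there is a $\sigma$-stable $J_0 \subseteq \Delta$ of minimum size with $w$ being $\sigma$-conjugate to an element of $W_{J_0}$, and any such element is automatically a $\sigma$-Coxeter of $W_{J_0}$. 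Applying Theorem~\ref{thm:HY}, suitably interpreted inside the Levi Weyl group $W_{J_0}$, allows us to take $K$ very special with respect to $\tau\circ\sigma$ and $u$ a $(\tau\circ\sigma)$-Coxeter element of $W_K$.

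The final step is to construct $v \in \LP(x)$ from this normal form. Since $K$ is very special, the structure of $u\tau$ is essentially that of a single Coxeter-type element twisted by a length-zero translation, and one can explicitly write down a $v \in W$ that sends the dominant chamber into the chamber compatible with $K$, so that $v^{-1}\sigma(wv)$ is conjugate to $u$ (hence a partial $\sigma$-Coxeter). Length positivity is verified by computing $\ell(x,v\alpha)$ for each $\alpha \in \Phi^+$ and using the very-special property to check that no affine root changes sign against the alcove $x\mathfrak a$. The main obstacle I anticipate lies in this last step: the explicit construction of $v$ and the verification of length positivity require careful bookkeeping between $K$, $J_0$, and the $(\tau\circ\sigma)$-action on $\tilde S$, and may well need to be handled via a case distinction according to the relative position of $K$ to the set $\Delta \subseteq \tilde S$ (i.e., how much of $K$ is ``genuinely affine''). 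A secondary technical point is extending Theorem~\ref{thm:HY} from the $\sigma$-Coxeter case in $W$ to the partial $\sigma$-Coxeter case, which should follow by restriction to the Levi $\widetilde W_{J_0}$.
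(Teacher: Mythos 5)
Your direction (a) $\Rightarrow$ (b) is fine, and your first step (invariance under $\approx_\sigma$ via Lemma~\ref{lem:reduction1}) matches the spirit of the paper. But the proposal has a genuine gap exactly where you flag ``the main obstacle'': the explicit construction of $v\in\LP(x)$ from the normal form $x=u\tau$ and the verification of length positivity is the entire content of the hard direction, and you do not carry it out. The paper avoids this construction altogether by a different device: after reducing to the elliptic case, it picks a $\sigma$-Coxeter element $c'$ and a sufficiently regular $\mu'\in\mu+\BZ\Phi$, so that $x'=c'\varepsilon^{\mu'}$ is visibly of positive Coxeter type (in the $2$-regular range $\LP(x')$ is a singleton, cf.\ Remark~\ref{rmk:HNY}), lies in the same $\sigma$-conjugacy class of $\widetilde W$ as $x$, and satisfies $x'\rightarrow_\sigma x$ by \cite[Theorem~A~(1)]{He2014b} and \cite[Proposition~A]{Marquis2020}; then Lemmas~\ref{lem:reduction1} and \ref{lem:reduction2} transport positive Coxeter type down the reduction to $x$. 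Without some such mechanism, your step 3 is not a proof.

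A second, more structural problem: the decomposition $x=u\tau$ with $\tau$ of length zero, $K\subseteq\tilde S$ spherical and $u\in W_{\af,K}$ exists only when the $\sigma$-conjugacy class $[\dot x]$ is \emph{basic} (this is \cite[Proposition~5.6]{Goertz2019}, as used in Lemma~\ref{lem:minimalElements}). For a general minimal length element you must first pass to the minimal Levi $M_J$ for which $x$ is a normalized $(J,u,\sigma)$-alcove element, where $\tilde x=u^{-1}x\sigma(u)$ becomes basic and its finite part $\sigma$-elliptic (Lemma~\ref{lem:minimalElements}); you then need Lemma~\ref{lem:conjugateToCoxeter} to know the finite part is conjugate to a Coxeter element \emph{inside} $W_J$, \cite[Proposition~4.5]{He2015b} to know $\tilde x$ is still of minimal length in $\widetilde W_J$, and Lemma~\ref{lem:Palcove} to transfer positive Coxeter pairs back from $M_J$ to $G$. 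Your appeal to ``the Levi $W_{J_0}$'' with $J_0$ the $\sigma$-support of a conjugate of $w$ conflates this Levi with the one determined by the $P$-alcove condition; making that identification precise is exactly what the paper's first reduction step does. I would recommend replacing your steps 2--3 by the paper's two-step argument (Levi reduction via $P$-alcoves, then downward transport from a regular Coxeter-type element).
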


For the proof, we need a bit of preparation.
\begin{lemma}\label{lem:conjugateToCoxeter}
    Let $J\subseteq \Delta$ be a $\sigma$-stable subset and let $w\in W_J$. Suppose that $w$ is $\sigma$-conjugate \emph{inside $W$} to a partial $\sigma$-Coxeter element of $W$. Then $w$ is also $\sigma$-conjugate inside $W_J$ to a partial $\sigma$-Coxeter element of $W_J$.
\end{lemma}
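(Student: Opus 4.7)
The plan is to argue by induction on $\#(J/\s)$, with trivial base case $J=\emptyset$. Let $c\in W$ be a partial $\s$-Coxeter element that is $\s$-conjugate to $w$ in $W$; by Lemma~\ref{lem:reflectionLength}(a) and (c),
\begin{align*}
\ell_{R,\s}(w)=\ell_{R,\s}(c)=\ell(c).
\end{align*}

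The first step is to establish a $W_J$-analog of Lemma~\ref{lem:reflectionLength}(d): whenever $\ell_{R,\s}(w)<\#(J/\s)$, there exists a proper $\s$-stable subset $L\subsetneq J$ such that $w$ is $W_J$-$\s$-conjugate to an element of $W_L$. The proof of Lemma~\ref{lem:reflectionLength}(d) carries over almost verbatim after setting $V_J:=\sum_{\a\in J}\BQ\a^\vee\subseteq X_\ast(T)_\BQ$: since $J$ is $\s$-stable and $W_J$ acts trivially on the orthogonal complement $V_J^\perp$, the decomposition $X_\ast(T)_\BQ=V_J\oplus V_J^\perp$ is $\s$- and $W_J$-stable, and restricts to $X_\ast(T)_\BQ^{\s w}=V_J^{\s w}\oplus (V_J^\perp)^\s$. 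A dimension count yields
\begin{align*}
\dim V_J^{\s w}=\#(J/\s)-\ell_{R,\s}(w)>0,
\end{align*}
so a nonzero $\mu\in V_J^{\s w}$ exists. Making $v^{-1}\mu$ $J$-dominant for some $v\in W_J$ and running the original argument inside $W_J$ then produces the required $L$.

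Granting this sub-lemma, the inductive step is immediate when $\ell_{R,\s}(w)<\#(J/\s)$: one passes to $w'\in W_L$ (which is still $W$-$\s$-conjugate to $c$), applies the induction hypothesis to $w'\in W_L$, and notes that a partial $\s$-Coxeter of $W_L$ is \emph{a fortiori} a partial $\s$-Coxeter of $W_J$ (distinct $\s$-orbits in $L$ remain distinct in $J$).

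The genuine case, and the main obstacle, is when $\ell_{R,\s}(w)=\#(J/\s)$, i.e., when $w$ is $\s$-elliptic in $W_J$. Setting $K:=\supp(c)$, the element $c$ is a $\s$-Coxeter of $W_K$ with $\#(K/\s)=\ell(c)=\#(J/\s)$, and it is $W$-$\s$-conjugate to $w$. To resolve this case I would invoke Marquis' theorem \cite[Theorem~5.9]{Marquis2020} (the very tool already used in the proof of Proposition~\ref{prop:J}(b)): the subsets $J$ and $K$ are linked by a chain of elementary Lusztig--Spaltenstein moves, each of which transports $\s$-Coxeter elements to $\s$-Coxeter elements, producing a $\s$-Coxeter element $c'$ of $W_J$ that is $W$-$\s$-conjugate to $c$ and hence to $w$. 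Since $c'$ and $w$ are both $\s$-elliptic in $W_J$, Marquis' theorem further upgrades this $W$-$\s$-conjugacy into a $W_J$-$\s$-conjugacy, completing the induction.
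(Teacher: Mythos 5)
Your reduction to the case $\ell_{R,\sigma}(w)=\#(J/\s)$ is sound: the relativized version of Lemma~\ref{lem:reflectionLength}(d) that you sketch (working in $V_J=\sum_{\a\in J}\BQ\a^\vee$, where $\dim V_J^{\s}=\#(J/\s)$ and the chosen $\mu\neq 0$ cannot be orthogonal to all of $\Phi_J$) does go through, and the inductive step in the non-elliptic case is correct. The gap is in Case~2, which is where the entire content of the lemma sits. There you make two assertions as applications of \cite[Theorem~5.9]{Marquis2020}: that $J$ and $K=\supp_\sigma(c)$ are linked by Lusztig--Spaltenstein moves, and that the resulting $W$-$\sigma$-conjugacy between $w$ and a $\sigma$-Coxeter element $c'$ of $W_J$ upgrades to a $W_J$-$\sigma$-conjugacy because both are $\sigma$-elliptic in $W_J$. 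Neither is a direct application of the cited results: Marquis' statements (Theorem~5.9, Proposition~A) take as input elements of \emph{minimal length} in their $\sigma$-conjugacy class, and your $w$ is only known to be $\sigma$-elliptic in $W_J$ --- ellipticity does not imply minimal length (the class of a Coxeter element already contains longer representatives). The ``upgrade'' you invoke is precisely the nontrivial statement that $\sigma$-elliptic classes of $W_J$ inject into $\sigma$-conjugacy classes of $W$; its proof requires the ingredient you skip: reduce $w$ by weakly length-decreasing cyclic shifts $\rightarrow_\sigma$ to a minimal length element, and observe that every such shift keeps you inside $W_J$ (for $y\in W_J$ and $s\notin J$, the shift $y\mapsto sy\sigma(s)$ either strictly increases the length or fixes $y$), so that both the reduction and the length-preserving shifts $\approx_\sigma$ furnished by Proposition~A can be taken inside $W_J$. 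Without this, even the existence of a minimal-length representative of the class with $\sigma$-support contained in $J$ --- which is what feeds the Lusztig--Spaltenstein chain --- is unjustified.

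Once you do carry out that reduction, the induction and the elliptic/non-elliptic dichotomy become unnecessary, and you land on the paper's argument: replace $w$ by a minimal length element $w'$ of its $W$-$\sigma$-conjugacy class reached via $\rightarrow_\sigma$ (noting $w'\in W_J$), then apply \cite[Proposition~A]{Marquis2020} to the two minimal length elements $w'$ and $c$ to get a $\sigma$-fixed $u$ with $u\bigl(\supp_\sigma(c)\bigr)=\supp_\sigma(w')\subseteq J$ and $w'\approx_\sigma ucu^{-1}$; hence $\ell(w')=\ell(c)=\ell_{R,\sigma}(w')$ and $w'$ is itself a partial $\sigma$-Coxeter element by Lemma~\ref{lem:reflectionLength}(c). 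I would recommend restructuring your Case~2 along these lines rather than treating Marquis' theorem as a black box.
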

\begin{proof}
	Denote by $\mathcal O$ the $\sigma$-conjugacy class of $w$ inside $W$. We find some $w'\in\mathcal O_{\min}$ that can be reached from $w$ by a weakly length-decreasing sequence of cylic shifts, i.e.\ $w\rightarrow_\sigma w'$ (notation as in \S\ref{sec:1.3}). Since $w\in W_J$, it follows from definition of the relation $\rightarrow_\sigma$ that also $w'\in W_J$.
	
	In other words, we may assume without loss of generality that $w$ has minimal length in $\mathcal O$.  
    Now we apply the Lusztig-Spaltenstein algorithm mentioned above, in a $\sigma$-twisted version: Let $c\in W$ be a partial $\sigma$-Coxeter element which is $W$-$\sigma$-conjugate to $w$ and $J' = \supp_\sigma(c)$. Then also $c\in \mathcal O_{\min}$. By \cite[Proposition~A]{Marquis2020}, there exists some $u\in W$ with $u(J') = J''$ and $\s (u) = u$, where $J'' = \supp_\sigma(w)\subseteq J$. Thus $u c(\s (u))^{-1} = ucu^{-1}\in W_{J''}$ is a partial $\s$-Coxeter element inside $W_{J''}$. Now Marquis' result moreover implies that $w$ and $c' = u cu^{-1}\in \mathcal O_{\min}$ are related by length-preserving cyclic shifts, i.e.\ $w\approx_\sigma c'$. Hence $w$ is a partial $\sigma$-Coxeter element in $W_J$. 
\end{proof}

The following Lemma relates positive Coxeter type elements in $\tW$ to positive Coxeter type elements in $\tW_J$.
\begin{lemma}\label{lem:Palcove}
Let $J'\subseteq J$ be two $\s$-stable subsets of $\BS$ and let $x=w\varepsilon^{\mu}$. 
\begin{enumerate}[(a)]
\item Let $u\in W^J$ and $\tilde v\in W_J$. Assume that $x$ is a normalized $(J,u,\s)$-alcove element. Let $\tilde x = u^{-1}x\s(u)$. Assume that $(\tilde x,\tilde v)$ is a positive Coxeter pair for $M_J$ with support $J'$. Then $(x,\s(u)\tilde v)$ is a positive Coxeter pair for $G$ with support $J'$.
\item Let $v\in W$. Assume that $(x,v)$ is a positive Coxeter pair for $G$ with support $J'$. Let $u\in W^J$ be the minimal representative of the coset $wvW_J$. Let $\tilde x = u^{-1}x\s(u) $ and $\tilde v = \s(u^{-1})v$. Then $\tilde v \in W_J$ and $(\tilde x, \tilde v)$ is a positive Coxeter pair for $M_J$ with support $J'$. 

\end{enumerate}
\end{lemma}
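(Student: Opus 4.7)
Both halves of the lemma ultimately unwind into direct computations in $\tW$ modulo one substantive step each; the approach mirrors the commented-out argument sketched earlier in this section.

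For part (a), the finite Coxeter identity is automatic from substitution: writing $w_{\tilde x} = u^{-1} w \s(u)$ for the finite part of $\tilde x$, one checks
\[
(\s(u)\tilde v)^{-1}\s(w\s(u)\tilde v) = \tilde v^{-1}\s(w_{\tilde x}\tilde v),
\]
which is a partial $\s$-Coxeter element in $W_{J'}$ by hypothesis on $(\tilde x,\tilde v)$. The substantive step is showing $\s(u)\tilde v \in \LP(x)$. I would case-split on $\a \in \Phi^+$. If $\a \in \Phi^+\setminus \Phi_J$, then $w_{\tilde x}\tilde v \in W_J$ sends $\a$ to some $\b \in \Phi^+\setminus \Phi_J$, and rewriting $\s(u)\tilde v\a = (w^{-1} u)\b$ lets the second $(J, u, \s)$-alcove condition give $\ell(x, \s(u)\tilde v\a) \geq 0$. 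If $\a \in \Phi_J^+$, apply \cite[Lemma~2.12]{Schremmer2022_newton} to identify $\ell(x, \s(u)\tilde v\a)$ with the analogous length functional of $\tilde x$ on $\tilde v\a$, which is non-negative by $\tilde v \in \LP(\tilde x)$.

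For part (b), the most delicate point comes first: verifying $\tilde v \in W_J$. Since $J' \subseteq J$ and $(x,v)$ has support $J'$, the element $v^{-1}\s(wv)$ lies in $W_{J'} \subseteq W_J$. Combining this with the factorisation $wv = u\cdot w_J$ coming from the choice of $u \in W^J$ as minimal representative, one deduces $v \in \s(u)W_J$, whence $\tilde v = \s(u^{-1})v \in W_J$. From this and the computation $u^{-1} w \s(u) = w_J\tilde v^{-1} \in W_J$, the first $(J, u, \s)$-alcove condition follows. The second alcove condition reduces, after rewriting $w^{-1} u\b = v(w_J^{-1}\b)$ and noting that $w_J^{-1}$ permutes $\Phi^+\setminus\Phi_J$, to length positivity of $v$ in $\LP(x)$ applied at the root $w_J^{-1}\b$.

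With the alcove setup in place, the positive Coxeter conclusion for $(\tilde x, \tilde v)$ is essentially the converse of part (a): the identity $\tilde v^{-1}\s(w_{\tilde x}\tilde v) = v^{-1}\s(wv)$ holds by direct computation, so the partial $\s$-Coxeter property in $W_{J'}$ transfers from $(x,v)$ to $(\tilde x,\tilde v)$; length positivity of $\tilde v$ in $\tW_J$ then follows from the same application of \cite[Lemma~2.12]{Schremmer2022_newton} used in (a). The main obstacle is therefore the interplay between $\s$-stability of $J$, the choice of minimal coset representative $u \in W^J$, and the Levi subgroup $W_J$; specifically, the argument that $\tilde v$ lands in $W_J$ crucially uses both $J' \subseteq J$ and the $\s$-stability of $J$ (which also guarantees $\s(u)\in W^J$, making the coset manipulation legitimate).
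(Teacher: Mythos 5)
Your proposal is correct and follows essentially the same route as the paper's proof: the identity of finite parts by direct substitution, the case split on $\Phi_J^+$ versus $\Phi^+\setminus\Phi_J$ reducing length positivity to the alcove condition and to \cite[Lemma~2.12]{Schremmer2022_newton} respectively, and the coset manipulation $wv\in uW_J$ combined with $v^{-1}\s(wv)\in W_{J'}\subseteq W_J$ to get $\tilde v\in W_J$ in part (b). The only cosmetic difference is that the paper expands the length functional explicitly where you cite Lemma~2.12, and in (b) it invokes the alcove criterion from \S\ref{sec:Palcove} rather than re-verifying both alcove conditions by hand.
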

\begin{proof}
\begin{enumerate}[(a)]
\item By definition of length functional, one can check directly that for any $\a\in \Phi^+_J$,
\begin{align*}
\ell( x,\s(u)\tilde v\a) &= -\Phi^+( w\s(u)\tilde v\a) +\<\mu,\s(u)\tilde v\a\> +\Phi^{+}( \s(u)\tilde v\a)\\
                         &= -\Phi^+( u^{-1}w\s(u)\tilde v\a) +\<\mu,\s(u)\tilde v\a\> +\Phi^{+}( \tilde v\a)\\
                         &= \ell(\tilde x,\tilde v \a)\ge0,
\end{align*}
where the second equality uses the fact that $u^{-1}w\s(u)\in W_J$, $\tilde v\in W_J$ and $u\in W^J$. Let $\b\in \Phi^+-\Phi_J$, set $\b' = u^{-1}w\s(u)\tilde v\b \in \Phi^+-\Phi_J$. We have
$$\ell( x,\s(u)\tilde v\a) = \ell(x,w^{-1}u\b' ),$$
which is $\ge0$ since $x$ is $(J,u,\s)$-alcove element.
Moreover, we have $(\s(u)\tilde v)^{-1} \s(x\s(u)\tilde v) = (\tilde v)^{-1}\s(\tilde x\tilde v) $, and the finite part of which is a $\s$-Coxeter element of $W_{J'}$.

\item Since $(x,v)$ is a positive Coxeter pair with support $J'\subseteq J$, we see that $x$ is a (normalized) $(J,u,\s)$-alcove element. Then $\tilde v = \s(u^{-1})v \in W_J\cdot \s(v^{-1}w^{-1})v\in W_J$. By the same calculation as in (a), one can check that for any $\a\in \Phi^+_J$,
$$\ell(\tilde x,\tilde v \a)= \ell(x,  \s(u)\tilde v \a) = \ell(x,    v \a)\ge0.$$
Moreover, $(\tilde v)^{-1}\s(\tilde x\tilde v) = (\s(u)\tilde v)^{-1} \s(x \s(u)\tilde v) =  v ^{-1} \s(x v)$, and the finite part of which is a $\s$-Coxeter element of $W_{J'}$. 
\qedhere\end{enumerate}
\end{proof}

The following lemma is of independent interest.
\begin{lemma}\label{lem:minimalElements}
    Let $x=w\varepsilon^\mu\in\tW$ be of minimal length in its $\sigma$-conjugacy class. Then the following are equivalent:
    \begin{enumerate}[(a)]
    \item The element $x$ is \emph{not} a $(J,v,\delta)$-alcove element for any $J\subsetneq \Delta$.
    \item We may write $x = \tau u$ with the following properties: The element $\tau\in\Omega$ is of length zero. There exists a $\sigma\circ\tau$-stable subset of simple roots $K\subset \Delta_{\af}$ that is \emph{spherical}, i.e.\ the subgroup $W_K$ of $\tW$ generated by the corresponding simple affine reflections is finite. We have $u\in W_K$ with $K = \supp_{\sigma\circ\tau}(u) = \supp_\sigma(x)$ and $K$ is maximal among all  $\sigma\circ\tau$-stable subsets of $\Delta_{\af}$.
	\item The element $w\in W$ is \emph{$\sigma$-elliptic}, i.e.\ not $\sigma$-conjugate to an element in some parabolic subgroup $W_J\subset W$ for a proper and $\sigma$-stable subset $J\subsetneq \Delta$.
    \end{enumerate}
\end{lemma}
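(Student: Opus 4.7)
The plan is to establish the equivalences in the order (a)$\iff$(c) first and then (b)$\iff$(c), throughout making use of the reformulation of the $(J,u,\sigma)$-alcove condition given after Definition~\ref{def:Palcove}: $x$ is $(J,u,\sigma)$-alcove iff there is $v\in\LP(x)$ with $v^{-1}\sigma(wv)\in W_J$. A direct manipulation gives
\[
v^{-1}\sigma(wv) = \sigma\bigl(\sigma^{-1}(v)^{-1}wv\bigr),
\]
so, since $J$ is $\sigma$-stable, this is equivalent to $w$ being $W$-$\sigma$-conjugate to an element of $W_J$. The implication (c)$\Rightarrow$(a) is then immediate: any $(J,u,\sigma)$-alcove structure with $J\subsetneq\Delta$ would $\sigma$-conjugate $w$ into the proper parabolic $W_J$, violating $\sigma$-ellipticity.

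For the converse (a)$\Rightarrow$(c), I assume $w$ is not $\sigma$-elliptic, so that $w$ is $W$-$\sigma$-conjugate to an element of $W_{J_0}$ for some proper $\sigma$-stable $J_0\subsetneq\Delta$. Using Lemma~\ref{lem:conjugateToCoxeter} I may minimize $J_0$ and assume $\#(J_0/\sigma) = \ell_{R,\sigma}(w)$ by Lemma~\ref{lem:reflectionLength}(d). The remaining task is to upgrade the abstract $\sigma$-conjugation to a witness $v \in \LP(x)$ of a proper $(J,u,\sigma)$-alcove structure on $x$. Here the minimality of $\ell(x)$ in its $\sigma$-conjugacy class is essential: I plan to invoke He--Nie's structural classification of minimal length elements \cite[Theorem~3.4]{He2014b}, which forces the translation part $\mu$ of $x$ to be compatible with the Levi reduction $M_{J_0}$ of $w$, and thereby allows the conjugating element to be chosen inside $\LP(x)$.

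For (c)$\iff$(b), I plan to use Theorem~\ref{thm:HY} together with the already-established (a)$\iff$(c). Assuming (c), the structure theorem for minimal length elements in a $\sigma$-elliptic conjugacy class produces a decomposition $x = \tau u$ with $\tau \in \Omega$ of length zero, $K \subseteq \Delta_\af$ a $\sigma\tau$-spherical subset, and $u$ a $\sigma\tau$-Coxeter element of $W_{\af,K}$; that $K$ matches $\supp_\sigma(x)$ is immediate from the reduced word for $u$, while maximality follows by contradiction: any strictly larger $\sigma\tau$-spherical $K' \supsetneq K$ would give rise, via the associated Levi reduction and the characterization of $(J,u',\sigma)$-alcove elements, to a proper $(J,u',\sigma)$-alcove structure on $x$, contradicting (a). Conversely, if $x$ admits a decomposition as in (b) with $K$ maximal, then any $(J,u',\sigma)$-alcove structure on $x$ with $J \subsetneq \Delta$ would enlarge $K$ by the same argument, so (a) holds, and hence (c) follows by (a)$\Rightarrow$(c).

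The main obstacle I foresee is the step of passing from an abstract $W$-$\sigma$-conjugation of $w$ into a proper parabolic to a length-positive witness in $\LP(x)$, which lies at the heart of (a)$\Rightarrow$(c). This is a compatibility problem between the finite Weyl group $W$ (which governs $\sigma$-conjugacy of $w$) and the translation lattice $X_\ast(T)_{\Gamma_0}$ (which governs length positivity); the bridge is provided by the minimal length hypothesis via the cocenter structure theory of He--Nie. Extracting and exploiting this compatibility carefully — including the reduction to a \emph{normalized} $(J,u,\sigma)$-alcove representative in which length positivity can be verified explicitly via Lemma~\ref{lem:LPsimpleReflections} — will be the technical heart of the proof.
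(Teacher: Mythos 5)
Your direction (c)$\Rightarrow$(a) is correct and matches the paper's (condition (a) of Definition~\ref{def:Palcove} gives $u^{-1}w\sigma(u)\in W_J$, so $w$ is not $\sigma$-elliptic). But the two substantive implications are not proved; they are only announced, and the tools you name for them do not do the job.

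First, the entire proposal is missing the reduction to the basic case, which is what makes the decomposition in (b) available at all: the paper first observes that $\CI x\CI$ lies in a single class $[b]$ (minimal length), that (a) forces $[b]$ basic by \cite[Theorem~A]{Goertz2015}, and that $[b]$ basic is \emph{equivalent} to the existence of a decomposition $x=\tau u$ with $u\in W_{\af,K}$, $K$ spherical and $\sigma\tau$-stable, by \cite[Proposition~5.6]{Goertz2019}. You instead invoke Theorem~\ref{thm:HY} to produce the decomposition, but that theorem only applies to classes whose classical part contains a $\sigma$-Coxeter element of $W$, which is not assumed here; and your claim that $u$ is a $\sigma\tau$-Coxeter element of $W_{\af,K}$ is false in general — (b) only asserts full $\sigma\tau$-support. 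Second, the step you yourself flag as ``the technical heart'' — upgrading ``$w$ not $\sigma$-elliptic'' to an actual proper $(J,u,\sigma)$-alcove structure on $x$, i.e.\ to a length-positive witness — is exactly the content of the lemma and is left unproved. Citing He--Nie's classification of minimal length elements does not supply it: the paper's argument requires constructing an explicit coweight $\lambda$ orthogonal to $\supp_\sigma(x)$ and fixed by the relevant twisted action (built from the $\sigma\tau$-orbits of $\Delta_\af\setminus K$), then producing $v\in\LP(x)\cap\LP(\varepsilon^\lambda)$ via length-additivity of $u\varepsilon^\lambda$, and finally a reflection-length argument in the finite root system spanned by $K$ together with Marquis' cyclic-shift result to get (b)$\Rightarrow$(c). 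Third, your maximality arguments are reversed or circular: the obstruction to (a) is the \emph{non}-maximality of $K$ (two $\sigma\tau$-orbits outside $K$ in one component yield the coweight $\lambda$ and hence a proper alcove structure); a ``strictly larger spherical $K'$'' does not by itself produce an alcove structure, and no Levi-reduction mechanism is given that would. As it stands, the proposal proves only the easy implication.
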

\begin{proof}
By \cite[Theorem~4.8]{He2014}, the Iwahori double coset $\mathcal Ix\mathcal I$ is contained in the $\sigma$-conjugacy class $[\dot x]\in B(G)$. In the case of (a), we know that $[\dot x]$ is basic by \cite[Theorem~A]{Goertz2015}. By \cite[Proposition~5.6]{Goertz2019}, we know that $[\dot x]$ is basic if and only if we have $x = \tau u$ for some length zero element $\tau$ and $u\in W_K$, where $K\subset \Delta_{\af}$ is spherical and $\sigma\circ\tau$-stable.

We summarize that both (a) and (b) are false if $[\dot x]$ is not basic. Moreover, (c) immediately implies (a). Let us thus assume that $[\dot x]$ is basic for the remainder of the proof. Then we know that $K = \supp_\sigma(x) = \supp_{\sigma\circ\tau}(u)$ is spherical, and we are given a decomposition $x = \tau u$ with $\tau\in \Omega$ and $u\in W_K$. We have to show that (a) implies (b), and that (b) implies (c).

Observe that it suffices to prove the lemma for each $\sigma$-connected component of the Dynkin diagram of $\Delta$, i.e.\ we may assume that $\Delta$ is $\sigma$-connected. Pick a connected component $C\subseteq \Delta$ and write $\Delta = C \sqcup \sigma(C)\sqcup\cdots \sqcup \sigma^n(C)$. Write $C_{\af}$ for the corresponding connected component of $\Delta_{\af}$, and let $\tau_0,\dotsc,\tau_n$ be the length zero elements corresponding to the action of $\tau$ on $\sigma^0(C),\dotsc,\sigma^n(C)$.

If $\tau'\in\Omega$ is any length zero element and $x' = (\tau')^{-1} x\sigma(\tau')\in\tW$ is the corresponding $\sigma$-conjugate of $x$, then we can observe that the truths of the statements (a), (b), (c) does not change if we pass between $x$ and $x'$. By choosing this $\tau'$ carefully in terms of $\tau_0,\dotsc,\tau_n$, we can make sure that $(\tau')^{-1} \tau \sigma(\tau')$ acts trivially on $\sigma(C_{\af})\sqcup\cdots\sqcup \sigma^n(C_{\af})$. In other words, we may assume that $\tau_0=\cdots=\tau_{n-1}=1$.

First assume that (a) holds true but (b) does not. Then we find two $\tau\circ\sigma$-orbits $o_1, o_2\subset \Delta_{\af}$ that are not contained in $K$, but that are contained in the same $\sigma$-connected component of $\Delta_{\af}$.

Define coweights $\lambda_1,\lambda_2\in \mathbb Q\Phi^\vee$ so that for any simple root $\alpha\in\Delta$, we have
\begin{align*}
\langle \lambda_i,\alpha\rangle = \begin{cases}1,&\alpha\in o_i,\\
0,&\alpha\notin o_i,\end{cases}\quad i=1,2.
\end{align*}
Let $\theta\in \Phi$ be the longest root of the connected component $C_{\af}\cap \Delta$. Inspecting the $\sigma\circ\tau$-action on $\Delta_{\af}$, with $\tau$ being trivial on $\sigma^0(C_\af),\dotsc,\sigma^{n-1}(C_\af)$, one sees that both $\lambda_1$ and $\lambda_2$ are constant on
\begin{align*}
\{\sigma^k(\theta)\mid k\in\mathbb Z\} = \{\theta,(\sigma\cl(\tau))(\theta),\dotsc,(\sigma\cl(\tau))^n(\theta)\}.
\end{align*}

We claim that we can find integers $q_1, q_2>0$ such that $\lambda := q_1 \lambda_1 - q_2 \lambda_2\in\mathbb Z\Phi$ is $\sigma\circ\cl(\tau)$-stable and satisfies $\langle \lambda,\cl(a)\rangle=0$ for all $a\in \Delta_\af\setminus (o_1\sqcup o_2)$:
\begin{itemize}
\item Consider first the case where $o_1\sqcup o_2\subset \Delta_\af$ does not contain the non-spherical affine root $(-\theta,1)$. In this case, it suffices to choose $q_1, q_2>0$ such that
\begin{align*}
\langle \lambda,\theta\rangle=0,
\end{align*}
which is certainly possible. Then $\lambda$ pairs trivially with all $\sigma$-conjugates of $\theta$ (by the above remark on $\lambda_1,\lambda_2$ being constant on this set). It follows that $\langle \lambda,\cl(a)\rangle=0$ for all simple affine roots $a\in \Delta_\af\setminus( o_1\sqcup o_2)$. Now it is clear that $\lambda$ is $\sigma\circ\cl(\tau)$-stable.
\item Consider next the case where the $\sigma\circ\tau$-orbit of $(-\theta,1)\in\Delta_\af$ contains a spherical affine root, and moreover agrees with $o_1$ or $o_2$. In this case, assume without loss of generality that it agrees with $o_1$, and pick a simple root $a\in \Delta\cap C_{\af}\cap o_1$. Choose $q_1, q_2>0$ such that
\begin{align*}
\langle \lambda,\theta-\alpha\rangle=0.
\end{align*}
Since $\lambda$ is, by construction, constant on the set $o_1\cap \Delta$, and by the above remark constant on the $\sigma$-orbit of $\theta$, it follows that $\lambda$ is constant on the set $\{\cl(a)\mid a\in o_1\}$. Certainly, $\lambda$ is also constant on the set $o_2\subset \Delta$, and constantly equal to zero on the set $\Delta_\af\setminus (o_1\sqcup o_2)\subseteq \Delta$.
\item Consider now the case where the $\sigma\circ\tau$-orbit of $(-\theta,1)$ in $\Delta_\af$ agrees with $o_1$, and only consists of the non-spherical simple affine roots $(-\sigma^k(\theta),1)$ for $k\in\mathbb Z$. In this case, observe that $\lambda_1=0$. Choose $q_1>0$ arbitrary and let $q_2>0$ such that $\lambda\in\mathbb Z\Phi^\vee$. Then all assertions are easily verified.
\end{itemize}
The condition that $\lambda$ pairs trivially with all $\cl(a)$ for $a\in K$ can be reformulated as $\ell(\varepsilon^\lambda,\cl(a))=0$ for all $a\in K$. By \cite[Lemma~2.9]{Schremmer2022_newton}, we get that $\ell(r_a \varepsilon^\lambda)>\ell(\varepsilon^\lambda)$ for all $a\in K$. By the standard theory of Coxeter groups, it follows that $\ell(u \varepsilon^\lambda) = \ell(u) + \ell(\varepsilon^\lambda)$ as $u\in W_K$. By \cite[Lemma~2.13]{Schremmer2022_newton}, there exists an element $v\in \LP(x)\cap\LP(\varepsilon^\lambda)$. This means $v$ is length positive for $x$ and $v^{-1}\lambda$ is dominant. We compute
\begin{align*}
(v^{-1}\sigma( wv))(\sigma v^{-1}\lambda) = v^{-1} \sigma \cl(\tau)u\lambda = v^{-1}\sigma \cl(\tau)\lambda = v^{-1}\lambda.
\end{align*}
This is dominant and in the same $W$-orbit as $\sigma v^{-1}\lambda$. Thus $v^{-1}\lambda$-is stable under both $\sigma$ and $v^{-1} \sigma(wv)$. Since $\lambda\neq 0$, this shows that shows that $\supp_\sigma(v^{-1}\s(wv))\subseteq \Stab(v^{-1}\lambda)\neq \Delta$, contradicting (a).

Finally, we prove that (b) implies (c). So assume that $K\subset \Delta_{\af}$ is maximal among all $\sigma\circ\tau$-stable spherical subsets. Let $Y\subseteq X_\ast(T)_{\mathbb Q}$ be the linear subspace spanned by the coroots $\cl(a)^\vee\in \Phi^\vee$ for the affine roots $a\in K$. Thus $\dim Y = \# K$, and the automorphisms $\sigma \cl(\tau)$ and $\cl(u)$ preserve the subset $Y\subset X_\ast(T)_{\mathbb Q}$.

Assume that $w$ is not $\sigma$-elliptic. Then, by Lemma~\ref{lem:reflectionLength}, we find a non-zero vector $\lambda\in X_\ast(T)_{\mathbb Q}^{\sigma w}$. This means that $\sigma \cl(\tau)\cl(u)\lambda = \lambda$. If $a\in \Delta_{\af}\setminus K$ is any simple affine root not in $K$, then
\begin{align*}
\langle \lambda, \cl(a)\rangle = \langle \lambda, \sigma \cl(\tau)\cl(u)\cl(a)\rangle = \langle \lambda,\cl(\sigma \tau a)\rangle.
\end{align*}
Since the set $\Delta_{\af}\setminus K$ consists of only one $\sigma\circ\tau$-orbit, the coweight $\lambda$ is constant on $\{\cl(a)\mid a\in \Delta_{\af}\setminus K\}$. Expressing the longest root of each connected component $C,\dotsc,\sigma^n(C)$ as a sum of simple roots, we get straightforward linear relations that the values $(\langle \lambda,\cl(a)\rangle)_{a\in \Delta_{\af}}$ must satisfy. These imply that if $\lambda\neq 0$, there has to be a root $a\in K$ with $\langle \lambda,\cl(a)\rangle\neq 0$. So the uniquely determined coweight $\lambda'\in Y$ with $\langle \lambda,\cl(a)\rangle = \langle \lambda',\cl(a)\rangle$ for all $a\in K$ is non-zero. It is also in $Y^{\sigma \cl(\tau)\cl(u)}$.

Note that $\{\cl(a)\mid a\in K\}\subseteq \BZ\Phi^\vee $ forms the basis of a root system (with Cartan matrix given by the submatrix of the affine Cartan matrix determined by $K$). The automorphism $\sigma\cl(\tau)$ acts on it, and the element $\cl(u)$ lies in the corresponding Weyl group $W_K$. By Lemma~\ref{lem:reflectionLength} applied to this Weyl group and this automorphism, the existence of $0\neq\lambda' \in Y^{\sigma \cl(\tau)\cl(u)}$ implies that $\cl(u)$ is not $\sigma \cl(\tau)$-elliptic in $W_K$. The automorphism of Coxeter groups $W_K\rightarrow W_K, z\mapsto \cl(z)$ coming from comparing the aforementioned Cartan matrices shows that $u\in W_K$ is not $\sigma\circ\tau$-elliptic. This contradicts the fact that $u$ has minimal length in its $\sigma\circ\tau$-conjugacy class in $W_K$ (by assumption on $x$) and $\supp_{\sigma\circ\tau}(u)=K$, by a version of the aforementioned Lusztig--Spaltenstein algorithm \cite[Proposition~A]{Marquis2020}.
\end{proof}

We can now prove our theorem.
\begin{proof}[Proof of Theorem~\ref{thm:coxeterConjugacyClasses}]
The direction (a) $\implies$ (b) follows directly from the definition. Let us prove the converse direction.

The first step is passing to a minimal Levi subgroup. Let $J$ be a minimal $\s$-stable subset such that $x$ is a (normalized) $(J,u,\s)$-alcove element for some $u\in W^J$. Let $\tilde x = u^{-1}x\s(u)$. Then $\cl(\tilde x)$ satisfies condition (a) of Lemma \ref{lem:minimalElements} and hence is $\s$-elliptic by condition (c). Then by Lemma \ref{lem:conjugateToCoxeter}, $\cl(\tilde x)$ is $W_J$-$\s$-conjugate to a $\s$-Coxeter element of $W_J$. By \cite[Proposition~4.5]{He2015b}, $\tilde x$ is of minimal length in its $\s$-conjugacy class in $\tW_J$. By Lemma \ref{lem:Palcove}, it suffices to prove that $\tilde x$ is a positive Coxeter type element for $M_J$. Hence we may assume that $J=\Delta$ and $x = \tilde x$.

Let $x' = c'\varepsilon^{\mu'}$, where $c'$ is a $\s$-Coxeter element, $\mu'\in \mu+\BZ\Phi$ and $\<\mu',\a\>$ is sufficiently large for all $\a\in \Delta$. Then $x'$ is a positive Coxeter type element (see Remark \ref{rmk:HNY}). Since $x$ is of minimal length in its $\sigma$-conjugacy class, by \cite[Theorem~A~(1)]{He2014b} and \cite[Proposition~A]{Marquis2020}, we have $x'\rightarrow_{\s}x$, (see \S\ref{sec:1.3} for the notation $\rightarrow_{\s}$). Hence by Lemma \ref{lem:reduction1} and \ref{lem:reduction2}, $x$ is a positive Coxeter type element.
\end{proof}

Let $\tilde \CO$ be a $\s$-conjugacy class of $\tW$. Theorem \ref{thm:coxeterConjugacyClasses} and its proof show that the following are equivalent:
\begin{enumerate}[(i)]
\item The class $\mathcal O$ contains some positive Coxeter type element.
\item The image of $\mathcal O$ under the natural projection $\widetilde W\twoheadrightarrow W$ contains a $\s$-Coxeter element
\item All minimal length elements in $\mathcal O$ are of positive Coxeter type.
\end{enumerate}
If one instead considers the more restrictive notion of \emph{finite Coxeter part} as in \cite{He2022}, then we still get the trivial implications (iii) $\implies$ (i) $\implies$ (ii), but the converses need not hold true. See the following two examples.

\begin{example}
If $G$ is split of type $B_2$, an the image of $\mathcal O$ in $W$ is of Coxeter type, then $\mathcal O$ itself might not contain any element with finite Coxeter part. Write $\alpha$ for the short simple root and $\beta$ for the long one, and consider the straight element $x = s_\beta s_\alpha s_\beta\varepsilon^{-\lambda}$, where $\langle \lambda,\alpha\rangle=0$ and $\langle \lambda,\beta\rangle=1$. The only elements $v\in W$ such that $v^{-1} s_\beta s_\alpha s_\beta v$ is a partial Coxeter element are $v=s_\beta, s_\beta s_\alpha, s_\beta s_\alpha, s_\alpha s_\beta s_\alpha$. In each case, we get that $\langle v^{-1}\lambda,\alpha\rangle = - \langle v^{-1}\lambda,\beta\rangle=\pm 1$ and $v^{-1} w v = s_\alpha$. Thus $v^{-1}\lambda + z\alpha^\vee$ is never dominant for any $z\in\mathbb Z$. This shows that
\begin{align*}
\mathcal O = \{y^{-1} x y\mid y\in\widetilde W\}
\end{align*}
does not contain an element of the form $s_\alpha\varepsilon^\mu$ with $\mu$ dominant. In particular, $\mathcal O$ does not contain any element of finite Coxeter part in the sense of \cite{He2022}.
    
\end{example}

\begin{example}
If $G = \GL_6$ is split of type $A_5$, it is possible to construct a conjugacy class $\mathcal O\subset \widetilde W$ containing an element of finite Coxeter part, but no minimal length element of finite Coxeter part. E.g.\ we find a straight and superbasic element $\tau\in\widetilde W$ which does not have finite Coxeter part, and then its conjugacy class $\mathcal O$ will contain elements of finite Coxeter part by the proof of Theorem~\ref{thm:coxeterConjugacyClasses}. However, $\tau$ is the only minimal length element in $\mathcal O$.

\end{example}

For $x\in W_a$, we denote by $\supp x$ the support of $x$, i.e., the set of $\a\in \D$ that $s_\a$ appears in some (or equivalently, any) reduced expression of $x$. For a length zero element $\t$, define
$$\supp_{\s}(\t x) = \bigcup_{n\in\BZ}(\s\circ\t)^n ( \supp(x)).$$

The next proposition shows that if a positive Coxeter type element $x$ is not of minimal length, then its support $J$ is large.
\begin{proposition}\label{prop:pctHasLargeSupport}
Let $(x=w\varepsilon^{\mu},v)$ be a positive Coxeter pair with support $J$. Suppose $J$ does not contain any $\s$-connected component of $I(\nu(\dot{x}))$. Then $x$ is of minimal length in its $\sigma$-conjugacy class.
\end{proposition}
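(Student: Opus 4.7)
The plan is to prove the contrapositive: assume $x$ is not of minimal length in its $\sigma$-conjugacy class, and deduce that $J$ contains a $\sigma$-connected component of $I_0 := I(\nu(\dot x))$.

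First I would perform a $P$-alcove reduction to the basic-Newton case. By Proposition~\ref{prop:J}(a) we have $J \subseteq I_0$, so $x$ is a normalized $(I_0, u, \sigma)$-alcove element for some $u \in W^{I_0}$. Setting $\tilde x = u^{-1} x \sigma(u) \in \tW_{I_0}$, Lemma~\ref{lem:Palcove}(b) produces a positive Coxeter pair $(\tilde x, \tilde v)$ in $M := M_{I_0}$ with the same support $J$; by Theorem~\ref{thm:Palcove}, $\nu_M(\dot{\tilde x}) = \nu(\dot x)$ is $M$-central, so $[\dot{\tilde x}]$ is $M$-basic. Standard $P$-alcove theory (e.g.\ \cite{Goertz2015}) matches the minimal-length representatives across $\tW$ and $\tW_{I_0}$, so I may replace $(G, x, v)$ by $(M, \tilde x, \tilde v)$ and henceforth assume $I_0 = \Delta$, $[\dot x]$ is $G$-basic, and $J \subsetneq \Delta$ contains no $\sigma$-connected component of $\Delta$.

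Next I would exploit the reduction-tree analysis of Theorem~\ref{thm:fctConsequences}(c) with $[b] = [\dot x] \in B(G)_x$: since $J \cap I_0 = J$, the unique reduction path $\underline p$ from $x$ to its $[\dot x]$-endpoint has $\ell_I(\underline p) = 0$, so every edge is type II. By Lemma~\ref{lem:reduction2}(c), type II edges preserve both the $\sigma$-conjugacy class of $x$ and the positive Coxeter pair structure with support $J$. Consequently, the endpoint $x'$ of $\underline p$ is a minimal-length element of $[\dot x]$ still underlying a positive Coxeter pair $(x', v')$ of support $J$. A second $P$-alcove descent sends $x'$ to $\hat{x'} \in \tW_J$, which is of minimal length in $\tW_{M_J}$ and underlies a positive Coxeter pair in $M_J$ with full support $\Delta_{M_J} = J$ and basic $M_J$-Newton. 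At this stage Theorem~\ref{thm:HY} applies: $\hat{x'} = \tau_0 c_0$ where $\tau_0 \in \Omega_{M_J}$ has length zero and $c_0$ is a $\sigma\tau_0$-Coxeter element of $W_K$ for the unique $\sigma\tau_0$-very special subset $K \subseteq \Delta_{\af, M_J}$.

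The crux is then to extract, from the very-special maximality of $K$ and the case-by-case classification of such subsets in each irreducible type (cf.\ \cite{He2012} and the remark after Theorem~\ref{thm:HY}), the existence of a full $\sigma$-connected component of $\Delta_{M_J} = J$ contained in $\cl(K)$. Pulling this back through the first Levi descent, together with the fact that different choices of $v$ and $J$ are related by Proposition~\ref{prop:J}(b) via $W_{I_0}^\sigma$-conjugation (which preserves the containment of a $\sigma$-connected component of $I_0$), produces a $\sigma$-connected component of $I_0$ contained in $J$, contradicting the hypothesis.

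The main obstacle is this last combinatorial matching: translating the affine very-special structure of $K \subseteq \Delta_{\af, M_J}$ into the desired finite-root-system containment statement for $J$. A secondary technical point, used in both Levi descents, is the careful verification that the minimal-length property transfers correctly under $(I_0, u, \sigma)$- and $(J, u_0, \sigma)$-alcove reductions; this is standard \cite{Goertz2015} but requires bookkeeping the length formula between $\tW$ and its Levi analogues.
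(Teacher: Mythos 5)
Your opening moves (contrapositive, descent to $M_{I_0}$ via Lemma~\ref{lem:Palcove} and Theorem~\ref{thm:Palcove}, reduction to the basic case) agree with the paper, and the transfer-of-minimality issue you flag is resolved there by a cleaner device: a non-minimal positive Coxeter element $y$ has $\#B(G)_y\geq 2$, because its reduction tree branches into paths whose generic classes have distinct $\lambda$-invariants by Lemma~\ref{lem:reduction2}; so it suffices to show $B(G)_x$ is a singleton, and the singleton property does transfer along the bijection $B(M_{I_0})_{\tilde x}\to B(G)_x$. From your second paragraph on, however, the argument has a structural gap: everything you deduce after building the reduction path $\underline p$ is a property of its minimal-length endpoint $x'$ alone, and $x'$ carries no memory of whether $x$ was minimal (a minimal-length $x$ is its own endpoint). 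Hence no analysis of $x'$ — via Theorem~\ref{thm:HY}, very special parahorics, or otherwise — can yield the conclusion of the contrapositive; if it could, it would equally "prove" that \emph{every} positive Coxeter pair has $J$ containing a $\sigma$-component of $I(\nu(\dot x))$, which is false for minimal-length elements with small support. Moreover the final inference is a non sequitur: "some $\sigma$-connected component of $J$ lies in $\cl(K)$" is a statement internal to $\Phi_J$ and implies nothing about $\sigma$-components of $I_0\supseteq J$ being contained in $J$; and the crux claim itself can fail, e.g.\ $K=\emptyset$ for superbasic classes.

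The missing idea is the paper's "last edge" argument, which is where non-minimality is genuinely used. After further reducing to $\Delta$ $\sigma$-connected (so that the hypothesis forces $J\subsetneq\Delta$, and $x$ is a proper $(J,u,\s)$-alcove element), one takes a cyclic-shift sequence $x=x_0\to_\s\cdots\to_\s x_n$ with $x_n$ of minimal length and $x_{n-1}$ not. Lemma~\ref{lem:minimalElements} applied to $x_n$, whose proper $P$-alcove property is inherited from $x$, shows that $\supp_\s(x_n)$ is a \emph{non-maximal} spherical subset of $\Delta_\af$; since $\supp_\s(x_{n-1})$ adds only one further $\s\tau$-orbit, it is still a proper, hence spherical, subset of the connected affine diagram, so $\CI x_{n-1}\CI$ lies entirely in the basic class and $B(G)_{x_{n-1}}$ is a singleton. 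But $x_{n-1}$ is a non-minimal element of positive Coxeter type, so $\#B(G)_{x_{n-1}}\geq 2$ by Theorem~\ref{thm:fctConsequences}(c) — a contradiction. Your proposal would need this (or an equivalent use of the non-triviality of the path) to close; as written, the He--Yang/very-special analysis of the endpoint cannot do the job.
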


\begin{proof}Denote $I = I(\nu(\dot{x}))$ for simplicity. We have $I\supseteq J$ (Proposition \ref{prop:J}). By Theorem \ref{thm:fctConsequences} (c), it suffices to prove that $B(G)_x$ contains only one element. Let $u$ be the minimal representative of the coset $wvW_{I }$, then $x$ is a $(I ,u,\s)$-alcove element. Let $\tilde{x} = u^{-1}x\s(u)\in \tW_I$. Using Lemma \ref{lem:Palcove}, we get that $\tilde x$ is a positive Coxeter type element for $M_I$ (with support $J$). If we can prove that $\tilde x$ is of minimal length in its $\tW_I$-$\s$-conjugacy class, then $B(M_I)_{\tilde x}$ contains only one element, and so is $B(G)_x$ (Theorem \ref{thm:Palcove}). Hence we may assume $I=\D$ and $x=\tilde x$. Note that the proposition can be proved separately on each $\sigma$--connected component of $\Delta$, e.g.\ the element $x$ has minimal length in its $\sigma$--conjugacy class if and only if this property is satisfied under every projections to a Levi subgroups of $G$ defined by a $\sigma$--connected component of $G$. Hence we also assume that $\D$ is $\s$-connected.

Assume $x$ is not of minimal length in its $\s$-conjugacy class, one can find a sequence of elements $x = x_0\xrightarrow{s_1}_\s x_1\xrightarrow{s_2}_\s \cdots \xrightarrow{s_{n-1}}_\s x_n$ such that $x_n$ is of minimal length in its $\s$-conjugacy class and $x_{n-1}$ is not. Then $x_n$ is contained in the basic $\s$-conjugacy class $[b_0]$. By \cite[Proposition~5.6]{Goertz2019}, $\supp_{\s} (x_n)$ is spherical. Note that $\supp_{\s} (x_{n-1}) = \supp_{\s} (x_n) \cup \{ (\s\circ\t)^n( \Ad(\t^{-1})(s_{n-1})    ) \mid n\in\BZ \} $.  Since $J\subsetneqq \D$, by Lemma \ref{lem:minimalElements}, $\supp_{\s} (x_n)$ is not maximal. Since $\BS$ is $\s$-connected, we see that $\supp_{\s} (x_{n-1})/ \< \s\t\>$ is also spherical. Hence $x_{n-1}$ is contained in the basic $\s$-conjugacy class $[b_0]$ but is not of minimal length in its $\s$-conjugacy class. This violates the strong multiplicity one property (Theorem \ref{thm:fctConsequences} (c)). This completes the proof.
\end{proof}

\subsection{A converse to Theorem \ref{thm:fctConsequences}}
\label{sec:converse}
The main result of this section is the following.
\begin{proposition}\label{prop:pctCharacterization}
Let $x = w\varepsilon^\mu$ and denote its dominant Newton point by $\nu(\dot x)\in X_\ast(T)_{\Gamma_0}\otimes\mathbb Q$. Consider a reduction tree $\mathcal T$ and the unique path $\underline p$ in $\mathcal T$ which only consists of type II edges, i.e.\ $\ell_I(\underline p)=0$. Then $x$ is a positive Coxeter element if and only if the following conditions are both satisfied:
\begin{enumerate}[(i)]
\item The element $w\in W$ is $\sigma$-conjugate to a partial $\sigma$-Coxeter element in $W$.
\item There exists some $v\in \LP(x)$ with
\begin{align*}
\dim X_{\underline p} = \frac 12\left(\ell(x)+\ell(v^{-1}\sigma(wv)) - \langle \nu(\dot x),2\rho\rangle - \defect([\dot x])\right).
\end{align*}
\end{enumerate}
If both conditions are satisfied and $v$ is as in (ii), then $(x,v)$ is a positive Coxeter pair.
\end{proposition}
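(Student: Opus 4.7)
The plan is to treat the two implications separately. The forward direction will follow directly from Theorem~\ref{thm:fctConsequences}, while the reverse direction will rely on applying Theorem~\ref{thm:coxeterConjugacyClasses} to the endpoint of $\underline p$ and then comparing dimensions.

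For the forward direction, assume $(x,v)$ is a positive Coxeter pair with support $J$. Proposition~\ref{prop:minNewton} and Proposition~\ref{prop:J}(a) yield $[b_{x,\min}]=[\dot x]$ and $J\subseteq I(\nu(\dot x))$. Applying Theorem~\ref{thm:fctConsequences}(c) to $[b]=[\dot x]$ then gives $\ell_I=\sharp((J\setminus I(\nu(\dot x)))/\sigma)=0$, so the unique path in $\mathcal T$ ending in $[\dot x]$ uses only type~II edges and is therefore exactly $\underline p$. Combined with Theorem~\ref{thm:fctConsequences}(b) and the decomposition $X_x(\dot x)=\bigsqcup_{\underline p'\colon[b]_{\underline p'}=[\dot x]}X_{\underline p'}$, this yields~(ii). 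For~(i), set $y:=\sigma^{-1}(v)$; then $\sigma^{-1}(v^{-1}\sigma(wv))=y^{-1}w\sigma(y)$ is a partial $\sigma$-Coxeter element (since $\sigma^{-1}$ permutes $\sigma$-orbits of simple roots) and is $\sigma$-conjugate to $w$.

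For the reverse direction, I will first identify the class $[b_{\underline p}]$. Every cyclic shift $y\mapsto sy\sigma(s)$ (for $s\in\BS_\af$) and every type~II step $y\mapsto r_ayr_{\sigma a}=r_a^{-1}y\sigma(r_a)$ is a $\sigma$-conjugation in $\widetilde W$, and hence in $G(\breve F)$. Therefore the class of the lift is preserved along $\underline p$, so $[\dot x_\infty]=[\dot x]$ for $x_\infty:=\text{end}(\underline p)$. Since $x_\infty$ has minimal length in its $\sigma$-conjugacy class, $B(G)_{x_\infty}=\{[\dot x]\}$ and hence $[b_{\underline p}]=[\dot x]$. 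Projecting the same argument to $W$ shows that the finite part $w_\infty:=\cl(x_\infty)$ is $\sigma$-conjugate in $W$ to $w$, so by hypothesis~(i) it is $\sigma$-conjugate to a partial $\sigma$-Coxeter element. Theorem~\ref{thm:coxeterConjugacyClasses} then produces some $v_\infty\in\LP(x_\infty)$ making $(x_\infty,v_\infty)$ a positive Coxeter pair.

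Finally, a dimension comparison pins down $\ell(v^{-1}\sigma(wv))$. Applying Theorem~\ref{thm:fctConsequences}(b) to the minimal length element $x_\infty$ and equating with He's formula $\dim X_{x_\infty}(\dot x)=\ell(x_\infty)-\langle\nu(\dot x),2\rho\rangle$ yields $\ell(x_\infty)=\ell(v_\infty^{-1}\sigma(w_\infty v_\infty))+\langle\nu(\dot x),2\rho\rangle-\defect(\dot x)$. Since $\ell_I(\underline p)=0$ and $\ell_{II}(\underline p)=\tfrac12(\ell(x)-\ell(x_\infty))$, the iterated fibration formula of \S\ref{sec:dlreduction} gives $\dim X_{\underline p}=\tfrac12(\ell(x)+\ell(x_\infty))-\langle\nu(\dot x),2\rho\rangle$. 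Substituting and equating with the dimension in~(ii) produces $\ell(v^{-1}\sigma(wv))=\ell(v_\infty^{-1}\sigma(w_\infty v_\infty))$. Since the right-hand side equals $\ell_{R,\sigma}(w_\infty)=\ell_{R,\sigma}(w)=\ell_{R,\sigma}(v^{-1}\sigma(wv))$ by Lemma~\ref{lem:reflectionLength}(a),(c), a final application of Lemma~\ref{lem:reflectionLength}(c) shows $v^{-1}\sigma(wv)$ is a partial $\sigma$-Coxeter element, making $(x,v)$ a positive Coxeter pair with the $v$ given in~(ii). The main obstacle is establishing the class preservation $[b_{\underline p}]=[\dot x]$ along $\underline p$, which crucially relies on type~II (and not type~I) edges being $\sigma$-conjugations in $\widetilde W$.
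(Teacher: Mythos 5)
Your proof is correct and follows essentially the same route as the paper: the key steps — observing that the endpoint of $\underline p$ lies in $[\dot x]$ and is of positive Coxeter type by Theorem~\ref{thm:coxeterConjugacyClasses}, computing $\dim X_{\underline p}$ via the iterated fibration formula and the dimension of $X_e(\dot x)$, and concluding through the characterization $\ell(v^{-1}\sigma(wv))=\ell_{R,\sigma}(w)$ of partial $\sigma$-Coxeter elements from Lemma~\ref{lem:reflectionLength} — are exactly the paper's. The only cosmetic difference is that you handle the forward implication separately via Theorem~\ref{thm:fctConsequences} applied to $x$ itself, whereas the paper derives both implications at once from the single identity $\dim X_{\underline p}=\frac 12\bigl(\ell(x)+\ell_{R,\sigma}(w)-\langle\nu(\dot x),2\rho\rangle-\defect([\dot x])\bigr)$, valid under (i) alone.
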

It is clear that part (i) is a necessary condition for $x$ to be of positive Coxeter type.

The path $\underline p$ ends in a minimal length element of $\mathcal O = \{y^{-1}x\sigma(y)\mid y\in\widetilde W\}$. In particular, $X_{\underline p}\subseteq X_x(\dot x)$.

Recall He's virtual dimension formula: For the one element $v\in \LP(x)$ which is given by the shortest element in $W$ such that $v^{-1}\mu$ is dominant, we set \cite[Section~10.1]{He2014}
\begin{align*}
d_x(b) = \frac 12\left(\ell(x)+\ell(v^{-1}\sigma(wv)) - \langle \nu(b),2\rho\rangle - \defect(b)\right),\qquad [b]\in B(G).
\end{align*}
It is proved by He that $\dim X_x(b)\leq d_x(b)$ always holds true \cite[Theorem~2.30]{He2015}, and that equality holds if $x$ is in a shrunken Weyl chamber and $[b]$ is \enquote{sufficiently small}, cf.\ \cite[Theorem~12.1]{He2014} and \cite[Theorem~6.1]{He2021_cordial}.

However, the notion of virtual dimension behaves poorly outside of the shrunken Weyl chambers, similarly to the aforementioned shortcomings of He--Nie--Yu's notion of finite Coxeter part. This led the first named author to conjecture that
\begin{align*}
\dim X_x(b)\leq \min_{v\in \LP(x)}\frac 12\left(\ell(x)+\ell(v^{-1}\sigma(wv)) - \langle \nu(b),2\rho\rangle - \defect(b)\right),~\forall [b]\in B(G)_x.
\end{align*} The right--hand side of this inequality is better behaved with respect e.g.\ to affine Dynkin diagram automorphisms. So condition (ii) means that $\dim X_{\underline p}$ agrees with this conjectural upper bound for $\dim X_x(\dot x)$. If one believes this conjecture, it can be summarized that condition (ii) means that the two inequalities
\begin{align*}
\dim X_{\underline p} \leq \dim X_x(\dot x)\leq \min_{v\in \LP(x)}\frac 12\left(\ell(x)+\ell(v^{-1}\sigma(wv)) - \langle \nu(\dot x),2\rho\rangle - \defect([\dot x])\right)
\end{align*}
are both equalities. Observe that the first inequality is actually an equality e.g.\ if $\mathcal T$ has only one reduction path for $[\dot x]$ (which is a necessary condition for positive Coxeter type). Moreover, the second inequality is known to be an equality if $x$ lies in a shrunken Weyl chamber and $\nu(\dot x)$ is basic \cite[Theorem~2.31]{He2015}.
\begin{proof}[Proof of Proposition~\ref{prop:pctCharacterization}]
We saw earlier that condition (i) is necessary for $x$ to be of positive Coxeter type. So let us assume (i) holds.

Let $e\in \widetilde W$ denote the endpoint of $p$. Then $e$ is a minimal length element in its $\sigma$-conjugacy class, and $\cl(e)\in W$ is $\sigma$-conjugate to a partial $\sigma$-Coxeter element. We know that $\nu(\dot x) = \nu(\dot e)$.

From Theorem~\ref{thm:coxeterConjugacyClasses}, we know that $e$ is of positive Coxeter type. Let $v'\in W$ such that $(e,v')$ is a positive Coxeter pair.

From the construction of $X_{\underline p}$, we see that
\begin{align*}
\dim X_{\underline p} = \ell_I(p) + \ell_{II}(p) +\dim X_e(\dot x).
\end{align*}
Since $\ell_I(p)=0$, we get $\ell_{II}(p) = \frac 12(\ell(x)-\ell(e))$. Moreover, by Theorem~\ref{thm:fctConsequences}, we have
\begin{align*}
\dim X_e(\dot x) = \frac 12\left(\ell(e) + \#(J/\sigma) - \langle \nu(\dot x),2\rho\rangle - \defect([\dot x])\right),
\end{align*}
where $J$ is the support of $(e,v')$. Observe that $\#(J/\sigma) = \ell_{R,\sigma}(\cl(e)) = \ell_{R,\sigma}(w)$.
Thus we conclude
\begin{align*}
\dim X_{\underline p} &= \frac 12\left(\ell(x)+\ell_{R,\sigma}(w) - \langle \nu(\dot x),2\rho\rangle - \defect([\dot x])\right).
\end{align*}
We see that condition (ii) holds true for a given element $v\in \LP(x)$ if and only if $\ell(v^{-1}\sigma(wv)) = \ell_{R,\sigma}(w)$. This is equivalent to $v^{-1}\sigma(wv)$ being a partial $\sigma$-Coxeter element. This finishes the proof.
\end{proof}

\subsection{The basic case}\label{sec:basic}
The goal of this section is the following splitting result.
\begin{theorem}\label{thm:basicSplitting}
Assume that $G$ is a split group and that $F = \mathbb F_q\doubleparen t$. Let $(x,v)$ be a pair of positive Coxeter type and $[b]\in B(G)_x$ a basic $\sigma$-conjugacy class. Then the iterated fibration from Corollary~\ref{cor:geo} splits for irreducible components.

Explicitly, choose any minimal length element $e$ in the $\sigma$-conjugacy class of $x$ inside $\widetilde W$, and denote by $\ell_{II}(b) = \frac 12\left(\ell(x) - \ell_{R,\sigma}(\cl(x)) +\defect(b)\right)$ the type II length of a reduction path for $b$. If $C_x\subseteq X_x(b)$ and $C_e\subseteq X_e(b)$ denote any irreducible components of these two affine Deligne--Lusztig varieties, then we have
\begin{align*}
C_x\cong \mathbb A^{\ell_{II}(b)} \times C_e.
\end{align*}
Here, we write $\mathbb A^n$ for the $n$-dimensional affine space over $\overline{\mathbb F_q}$. The component $C_e$ is an irreducible component of a classical Deligne--Lusztig variety of Coxeter type as detailed in Corollary~\ref{cor:geo}.
\end{theorem}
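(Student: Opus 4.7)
The plan is to proceed by induction on $n := \ell_{II}(b) = \tfrac{1}{2}(\ell(x) - \ell(e))$. The base case $n=0$ holds automatically, since then $x=e$ and $\BA^0$ is a point, so $C_x = C_e$.

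For the inductive step, since $[b]$ is basic we have $I(\nu(b)) = \Delta$, and Theorem~\ref{thm:fctConsequences}(c) shows that the unique reduction path from $x$ to $[b]$ consists only of type II edges. Thus, after replacing $x$ by an appropriate $x' \approx_\sigma x$ (which yields a $\BJ_b(F)$-equivariant isomorphism $X_x(b) \cong X_{x'}(b)$ via Proposition~\ref{DL-reduction}(1), with no perfection needed since we are in equal characteristic), we can find a simple affine root $a \in \Delta_{\af}$ with $\ell(r_a x r_{\sigma a}) = \ell(x) - 2$. Lemma~\ref{lem:reduction2} ensures $(r_a x r_{\sigma a}, s_{\sigma\alpha} v)$ is again a positive Coxeter pair with the same basic class $[b]$, while Proposition~\ref{DL-reduction}(2) combined with the uniqueness of the reduction path produces a $\BJ_b(F)$-equivariant, Zariski-locally trivial $\BA^1$-bundle
\[
\pi \colon X_x(b) \longrightarrow X_{r_a x r_{\sigma a}}(b).
\]

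By the inductive hypothesis, each irreducible component $C'$ of $X_{r_a x r_{\sigma a}}(b)$ factors as $C' \cong \BA^{n-1} \times C_e$. It suffices to trivialize the induced $\BA^1$-bundle $C_x \to C'$ for each irreducible component $C_x$ of $X_x(b)$ lying over $C'$. This I would deduce from cohomological vanishing: by Corollary~\ref{cor:geo}, each $C_e$ is an irreducible component of a classical Coxeter-type Deligne--Lusztig variety, which is known to be affine by a classical result of Lusztig; hence $C' \cong \BA^{n-1} \times C_e$ is affine, so $H^1(C', \mathbb G_a) = 0$, and every Zariski-locally trivial $\BA^1$-bundle over $C'$ is trivial.

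The main obstacle is coherence: one must ensure that the trivializations chosen at each step of the induction fit together to produce an isomorphism $C_x \cong \BA^n \times C_e$ compatible with the full iterated fibration from Corollary~\ref{cor:geo}, rather than an unrelated choice at each stage. Here $\BJ_b(F)$-equivariance and the transitive action on irreducible components (Theorem~\ref{thm:fctConsequences}(c)) reduce the problem to one fixed component at a time, and the explicit description of the fibres of $\pi$ as orbits under the affine root subgroup $U_a \cong \BA^1$ supplies a natural section: tracking this section through the cyclic-shift identifications used in the reduction step furnishes the desired coherent product decomposition, completing the induction.
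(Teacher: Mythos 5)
Your overall strategy (induct along the unique all-type-II reduction path and trivialize one $\mathbb A^1$-step at a time) matches the paper's, but the step where you actually trivialize the $\mathbb A^1$-bundle over an irreducible component has a genuine gap. The cohomological argument does not work as stated: a Zariski-locally trivial $\mathbb A^1$-bundle over a reduced base has transition functions in $\mathbb G_m\ltimes\mathbb G_a$, so it is a torsor under a line bundle $L$, not under $\mathbb G_a$. Over an affine base the vanishing of $H^1(C',L)$ only shows the bundle is isomorphic to the total space of $L$; to conclude triviality you would additionally need $\operatorname{Pic}(C')=0$, i.e.\ $\operatorname{Pic}(C_e)=0$ for an irreducible component of a Coxeter-type Deligne--Lusztig variety, which you neither state nor prove. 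The claim ``every Zariski-locally trivial $\mathbb A^1$-bundle over an affine variety is trivial'' is false in general.

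The fallback you offer --- that the fibres of $\pi$ are $U_a$-orbits and hence carry a ``natural section'' --- is precisely where the real content lies, and it is not natural as stated: the fibre over a point $g_1I$ is $g_1\,IsI/I$, and its identification with $\mathbb A^1$ depends on the choice of representative $g_1$ of the coset, since right multiplication by $I$ acts nontrivially on $IsI/I$. To make this canonical one needs a section of $G(\breve F)\to G(\breve F)/\mathcal I$ over the relevant locus. The paper obtains exactly this from Faltings' big-cell result (Theorem~\ref{thm:faltings}: $IyI\subseteq y\mathcal N^-I$ with $\mathcal N^-\xrightarrow{\sim}\mathcal N^-I/I$), but only over a \emph{single} Iwahori double coset $IyI$; it must therefore first prove that an irreducible component of $X_{x'}(b)$ is contained in one such $IyI$ (Proposition~\ref{prop:coxeterAdlvInSchubertCell}), which in turn rests on the G\"ortz--Haines--Kottwitz--Reuman dimension comparison (Theorem~\ref{thm:ghkrDimension}) and the new lower bound of Lemma~\ref{lem:iwahoriDoubleCosetEstimate}. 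None of these ingredients, or substitutes for them, appear in your proposal, and they are also what removes the residual ``universal homeomorphism'' in Proposition~\ref{DL-reduction}(2), which your argument leaves in place even in equal characteristic. So the inductive skeleton is right, but the trivialization step needs to be replaced by the explicit Schubert-cell/big-cell argument (or an honest proof that the relevant line bundle is trivial and the universal homeomorphism is an isomorphism).
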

We expect that Theorem~\ref{thm:basicSplitting} holds for all $G$ and $F$, and that an analogous result holds for arbitrary $[b]\in B(G)_x$ (with additional direct factors isomorphic to $\mathbb G_m$ as counted in Theorem~\ref{thm:fctConsequences}). We even expect that our proof method can be followed in general, though one would have to revisit the proofs of the following two essential ingredients as well.
\begin{theorem}[{\cite[Theorem~11.3.1]{Goertz2010}}]\label{thm:ghkrDimension}
Assume that $G$ is a split group and that $F = \mathbb F_q\doubleparen t$. Let $x,y,b\in\widetilde W$ such that $\ell(b)=0$. Then the intersection $X_x(b)\cap IyI\subseteq G(\breve F)/I$ is non-empty if and only if $IxI\cap y^{-1}IbyI\neq\emptyset$.

If this is the case, the two subsets of the affine flag variety
\begin{align*}
X_x(b)\cap IyI/I,\qquad (IxI/I)\cap (y^{-1} IbyI/I)
\end{align*}
have the same dimension.\rightqed
\end{theorem}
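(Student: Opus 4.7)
The plan is to translate the statement into a question purely about the Iwahori $I$, exploiting that $\ell(b)=0$ forces $bIb^{-1}=I$ and that for $G$ split one may choose $\sigma$-stable lifts $\dot y \in N_G(T)(F)$ of each $y\in \widetilde W$. Setting $\tilde\sigma := \Ad(b)\circ\sigma$, this is a pro-algebraic automorphism of $I$ preserving the Moy--Prasad filtration.

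First I would parametrise the Schubert cell $IyI/I$ as $I/(I \cap \dot y I \dot y^{-1})$, so a point has the form $\dot u \dot y I$ with $\dot u \in I$ well-defined modulo right multiplication by $I\cap \dot y I \dot y^{-1}$. Using $b\sigma(\dot u \dot y) = \tilde\sigma(\dot u) b \dot y$, the condition $\dot u\dot y I \in X_x(b)$ rewrites as
\[
\Xi(\dot u) \in \dot y\, IxI\, (b\dot y)^{-1}, \quad \text{where } \Xi\colon I\to I,\ i\mapsto i^{-1}\tilde\sigma(i).
\]
A short computation confirms this condition descends to the quotient by $I\cap \dot y I\dot y^{-1}$. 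Symmetrically, $(IxI/I)\cap(\dot y^{-1} I b \dot y I/I)$ is parametrised by $(I\cap \dot y IxI(b\dot y)^{-1})/(I\cap b\dot y I \dot y^{-1}b^{-1})$ via $i\mapsto \dot y^{-1} i \cdot b \dot y I$. A key observation is that $I\cap b\dot y I \dot y^{-1}b^{-1} = b(I\cap \dot y I \dot y^{-1})b^{-1}$, which is isomorphic via $\Ad(b^{-1})$ to $I\cap \dot y I \dot y^{-1}$.

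Next I would invoke Lang--Steinberg for the pro-algebraic $I$ with Frobenius $\tilde\sigma$, applied level by level to the Moy--Prasad filtration. This yields that $\Xi$ is surjective with fibres the right cosets of $I^{\tilde\sigma}$; more usefully, the projection $\{\dot u \in I : \Xi(\dot u) \in \dot y IxI (b\dot y)^{-1}\} \twoheadrightarrow I\cap \dot y IxI(b\dot y)^{-1}$ is a torsor under $I^{\tilde\sigma}$. Quotienting both sides by the right action of $I \cap \dot y I \dot y^{-1}$ on the left and of $I \cap b\dot y I \dot y^{-1} b^{-1}$ on the right, the dimensions match because those two acting subgroups are conjugate via $b$. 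Non-emptiness equivalence is immediate from surjectivity of $\Xi$.

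The main obstacle will be making the Lang--Steinberg and ind-scheme bookkeeping rigorous: one must verify that $\Xi$ restricted to the relevant locally closed subsets has the claimed torsor structure in a pro-finite-type sense, and that quotients of ind-schemes by pro-algebraic group actions produce schemes of the expected dimension. This technical heart of the proof---handled by the cited paper via a careful analysis using the affine root group decomposition of $I$, the length-zero condition $\ell(b)=0$ ensuring that $\Ad(b)$ permutes affine root subgroups in a length-preserving way, and the equal-characteristic assumption $F=\mathbb F_q\doubleparen t$---is precisely the content of the result being cited.
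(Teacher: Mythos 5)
The paper offers no proof of this statement: it is quoted verbatim from \cite[Theorem~11.3.1]{Goertz2010} and used as a black box (hence the tombstone in the statement), so the only meaningful comparison is with the argument in the cited source. Your sketch reconstructs that argument in its essential form: use $\ell(b)=0$ to get $bIb^{-1}=I$ and a Frobenius $\tilde\sigma=\Ad(b)\circ\sigma$ on $I$, parametrize $IyI/I$ by $I/(I\cap \dot y I\dot y^{-1})$, rewrite membership in $X_x(b)$ as $\Xi(\dot u)\in Z:=I\cap \dot y IxI(b\dot y)^{-1}$ for the twisted Lang map $\Xi$, identify $(IxI/I)\cap(y^{-1}IbyI/I)$ with $Z/(I\cap b\dot yI\dot y^{-1}b^{-1})$, and conclude via level-by-level Lang--Steinberg. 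That is the mechanism of the cited proof, and your explicit identifications of both sides are correct.

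One step is stated too loosely to be taken literally. The Lang map does not descend to a map between the two quotients you name: for $j\in I\cap\dot yI\dot y^{-1}$ one has $\Xi(ij)=j^{-1}\Xi(i)\tilde\sigma(j)$, a twisted conjugation rather than a right translation, so $\Xi(ij)$ and $\Xi(i)$ need not lie in the same right coset of $I\cap b\dot yI\dot y^{-1}b^{-1}$. The repair is to compare total spaces instead of constructing a quotient map: $S:=\Xi^{-1}(Z)$ is stable under right multiplication by $H:=I\cap\dot yI\dot y^{-1}$ (because $j^{-1}Z\tilde\sigma(j)\subseteq Z$), this action is free with quotient $X_x(b)\cap IyI/I$, and likewise the target is the free quotient of $Z$ by $bHb^{-1}$. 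Since $\Xi$ has zero-dimensional fibres (torsors under the compact group $I^{\tilde\sigma}$), one gets $\dim S=\dim Z$ after truncating by a deep congruence subgroup, and subtracting the equal dimensions of $H$ and $bHb^{-1}$ gives the claim; non-emptiness follows from surjectivity of $\Xi$ as you say. With that correction, and granting the pro-algebraic Lang--Steinberg and truncation bookkeeping that you explicitly defer to \cite{Goertz2010}, the proposal is sound.
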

\begin{theorem}[{\cite[Section~3]{Faltings2003}}]\label{thm:faltings}
Assume that $G$ is a split group and that $F =\mathbb F_q\doubleparen t$. There is a notion of \enquote{big open cell} $\mathcal N^-\subset G(\breve F)$, which coincides with the subgroup of $G(\breve F)$ generated by all negative affine root subgroups. Moreover, it satisfies the following two additional properties.
\begin{enumerate}[(a)]
\item The map $\mathcal N^-\rightarrow \mathcal N^- I/I,~ g\mapsto gI$ is an isomorphism.
\item For any $x\in\widetilde W$, we have $IxI\subseteq x\mathcal N^- I$.\rightqed
\end{enumerate}
\end{theorem}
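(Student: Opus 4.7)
The plan is to construct $\mathcal N^-$ as an ind-affine scheme equipped with a natural map to the affine flag variety, and then verify properties (a) and (b) via root-theoretic bookkeeping, following the framework of \cite{Faltings2003}. Since $G$ is split over $F = \mathbb F_q\doubleparen{t}$, a pinning identifies each affine root subgroup $U_a \subset G(\breve F)$ with the additive group $\BG_a$. I would first fix a total order $a_1, a_2, \ldots$ of $\Phi_\af^-$ refining the height order, so that each partial product $\prod_{i \leq n} U_{a_i}$ forms a finite-dimensional unipotent subgroup of $G(\breve F)$. This realizes $\mathcal N^- = \bigcup_{n \geq 1} \prod_{i \leq n} U_{a_i}$ as an ind-affine scheme, and the multiplication map $\mu: \mathcal N^- \times I \to G(\breve F)$ becomes a morphism of ind-schemes.

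For (a), the essential point is that $\mathcal N^- \cap I = \{1\}$ scheme-theoretically. At the level of $\mathbf k$-points, any element $g \in \mathcal N^- \cap I$ admits two normal forms: one as a finite product of elements in negative root subgroups (from its expression in $\mathcal N^-$), and one as the product of a torus element with elements in positive root subgroups (from $I$). Projecting to the reductive quotient of a sufficiently large parahoric subgroup containing $g$, this reduces to the classical statement $U^- \cap B = \{1\}$ in a split reductive group with Borel $B$ and opposite unipotent radical $U^-$. To upgrade this to an isomorphism of ind-schemes, I would pass to each finite level $\mathcal N^-_n := \prod_{i \leq n} U_{a_i}$ of $\mathcal N^-$: the restriction $\mu_n : \mathcal N^-_n \to G(\breve F)/I$ is a locally closed immersion by the Bruhat decomposition combined with a tangent space computation against the complementary positive root filtration of $\mathrm{Lie}(I)$. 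Taking the colimit over $n$ then realizes $\mathcal N^-$ as an open sub-ind-scheme $\mathcal N^- I/I$ of the affine flag variety, proving (a).

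For (b), I would use the standard factorization of $I$ under conjugation by $x \in \widetilde W$. The set $x^{-1}\Phi_\af^+$ decomposes as
\begin{align*}
x^{-1}\Phi_\af^+ = \bigl( (x^{-1}\Phi_\af^+) \cap \Phi_\af^+ \bigr) \sqcup \bigl( (x^{-1}\Phi_\af^+) \cap \Phi_\af^- \bigr),
\end{align*}
and correspondingly $x^{-1} I x = A \cdot B$, where $A$ is generated by the root subgroups $U_a$ for $a \in (x^{-1}\Phi_\af^+)\cap \Phi_\af^-$ and $B$ is generated by the torus together with $U_a$ for $a \in (x^{-1}\Phi_\af^+) \cap \Phi_\af^+$. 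Since $A \subseteq \mathcal N^-$ and $B \subseteq I$, we conclude
\begin{align*}
IxI = x \cdot (x^{-1} I x) \cdot I = x \cdot A \cdot B \cdot I \subseteq x \mathcal N^- I,
\end{align*}
establishing (b).

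The main obstacle will be the scheme-theoretic verification in (a) that $\mu$ is an isomorphism of ind-schemes, rather than merely a bijection on $\mathbf k$-points. One must carefully control the fibers and tangent spaces of $\mu_n$ at each finite level, and verify compatibility with the transition maps in the ind-scheme presentation of $G(\breve F)/I$; this is where most of the technical work in \cite{Faltings2003} is concentrated, relying on the infinite-dimensional generalization of the standard \emph{ouvert} decomposition of a reductive group near the identity.
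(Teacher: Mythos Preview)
The paper does not prove this theorem at all: the \verb|\rightqed| at the end of the statement signals that it is imported wholesale from \cite[Section~3]{Faltings2003} as a black box, with no argument supplied. So there is no proof in the paper to compare your proposal against.

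That said, your sketch is a reasonable outline of the underlying argument. Part (b) is essentially correct as written: the Iwahori decomposition of $x^{-1}Ix$ into a product of the root subgroups indexed by $(x^{-1}\Phi_\af^+)\cap\Phi_\af^-$ times those indexed by $(x^{-1}\Phi_\af^+)\cap\Phi_\af^+$ (and the torus part) is standard, and the inclusion $IxI = x(x^{-1}Ix)I \subseteq x\mathcal N^- I$ follows. For part (a), you have correctly identified both the easy set-theoretic content ($\mathcal N^-\cap I = \{1\}$) and the genuinely delicate point, namely the upgrade to an isomorphism of ind-schemes; you are right that this is where the real work in Faltings lies, and your description of the strategy (finite-level immersions via tangent space considerations against the positive filtration, then pass to the colimit) matches the shape of his argument. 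One small caution: the factorization $x^{-1}Ix = A\cdot B$ in (b) requires knowing that the finite set $(x^{-1}\Phi_\af^+)\cap\Phi_\af^-$ is closed under root addition in the appropriate sense so that $A$ is actually a group; this is true (it is the set of inversions of $x^{-1}$), but deserves a word of justification.
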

The relevance of Falting's result with respect to the Deligne--Lusztig reduction method discussed in Section~\ref{sec:dlreduction} is explained as follows.
\begin{corollary}\label{cor:faltingsForAdlv}
Let $x\in\widetilde W, b\in G(\breve F)$ and $s\in\mathbb S_{\af}$ such that $\ell(sx\sigma(s)) = \ell(x)-2$. Write $X_x(b) = X_1\sqcup X_2$ and $X_2\xrightarrow{\phi} X_{sx\sigma(s)}(b)$ as in Proposition~\ref{DL-reduction}. For any $y\in\widetilde W$, we have an isomorphism \begin{align*}
\phi^{-1}(X_{sx\sigma(s)}(b)\cap IyI)~\cong~ \mathbb A^1\times (X_{sx\sigma(s)}(b)\cap IyI)
\end{align*}
that is compatible with the natural projection maps to $(X_{sx\sigma(s)}(b)\cap IyI)$ on either side.
\end{corollary}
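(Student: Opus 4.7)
The plan is to make the Deligne--Lusztig reduction explicit enough to see that the $\mathbb{A}^1$-direction of $\phi$ is literally an affine root subgroup coordinate in the Faltings parametrization of $IyI/I$, and so splits off as a direct factor.

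First I would unpack the construction of $\phi$ in Proposition~\ref{DL-reduction}(2). Writing $s = r_a$ for the simple affine root $a \in \Delta_\af$ and $U_a \subseteq G(\breve F)$ for the corresponding affine root subgroup, the standard construction realizes $\phi$ (up to universal homeomorphism) as a $U_a$-torsor: the group $U_a \cong \mathbb{G}_a$ acts freely on $X_2$ by a suitable modification of left multiplication, preserves the condition defining $X_2$, and has quotient exactly $X_{sx\sigma(s)}(b)$ via $gI \mapsto g\dot{s}I$. In particular, the fibre of $\phi$ over a point $g' I \in X_{sx\sigma(s)}(b)$ is an orbit $U_a \cdot g_0 I$ inside the affine flag variety.

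Next I would combine this with Theorem~\ref{thm:faltings}. By part (a), $\mathcal{N}^- \xrightarrow{\sim} \mathcal{N}^- I/I$, and by part (b) we have $IyI \subseteq y\mathcal{N}^- I$. Hence $y^{-1}\cdot (IyI/I)$ is identified with a closed sub-affine-space of $\mathcal{N}^-$ spanned by a distinguished product of negative affine root subgroups $\prod_{b \in \Sigma_y} U_{-b}$, where $\Sigma_y \subseteq \Phi_\af^+$ is the finite set of affine roots made negative by $y$. Tracking the $U_a$-action from the previous paragraph through this parametrization, I would show that, when restricted to $\phi^{-1}(X_{sx\sigma(s)}(b) \cap IyI)$, the $U_a$-action moves the point along the coordinate direction corresponding to a single element $b_y \in \Sigma_y$ (a $y$-conjugate of $a$ or $\sigma(a)$, depending on the sign conventions). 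Since the product structure of $\mathcal{N}^-$ in terms of root subgroups can be ordered so that the $b_y$-coordinate appears first, this $\mathbb{A}^1$-direction splits off as a direct factor of the coordinate affine space parametrizing $\phi^{-1}(X_{sx\sigma(s)}(b) \cap IyI)$, yielding the desired product decomposition compatible with the projection to $X_{sx\sigma(s)}(b) \cap IyI$.

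The main obstacle is the middle step: identifying precisely which affine root $b_y$ corresponds to the fibre direction and confirming that it indeed lies in $\Sigma_y$ so that it appears as a coordinate in the Faltings parametrization of $IyI/I$. This requires a careful case distinction according to whether $ys$ is shorter or longer than $y$ (and symmetrically for $\sigma(s)$), since in different cases the fibre of $\phi$ over $X_{sx\sigma(s)}(b) \cap IyI$ may sit inside $IyI$ itself or inside a neighboring Iwahori cell $I(ys)I$; in each case one must verify that the relevant negative affine root is present in $\Sigma_y$ (or $\Sigma_{ys}$) and that the $U_a$-action is free along exactly that direction, independent of the other coordinates cutting out $X_{sx\sigma(s)}(b) \cap IyI$ inside $\mathcal{N}^-$.
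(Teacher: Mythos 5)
Your proposal has the right key ingredient (Faltings' big cell as the rigidification device), but the route you plan through it contains a genuine gap. First, the claimed $U_a$-torsor structure on $\phi$ does not exist. Unwinding Proposition~\ref{DL-reduction}(2) as in \cite[Corollary~2.5.3]{Goertz2010b}, the fibre of $\phi$ over a point $g_1I\in X_{sx\sigma(s)}(b)$ is $g_1IsI/I=\{g_1u\dot sI\mid u\in U_a\}$. This set is well defined, but its parametrization by $U_a$ is not: replacing $g_1$ by $g_1h$ with $h\in I$ reshuffles the parametrization (write $hu\dot s=u'\dot sh'$; then $u'$ depends nontrivially on both $h$ and $u$). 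So there is no canonical left- or \emph{modified left-}action of a fixed $U_a$ on $X_2$ whose orbits are the fibres; a priori $\phi$ is only Zariski-locally trivial, and the entire content of the corollary is that it trivializes globally over $X_{sx\sigma(s)}(b)\cap IyI$. Second, your coordinate-splitting step applies the Faltings parametrization to the wrong space: the total space $\phi^{-1}(X_{sx\sigma(s)}(b)\cap IyI)$ sits inside $IyIsI/I$, not inside $IyI/I$, so the fibre direction cannot be read off as a single coordinate of $\Sigma_y$. Comparing the big-cell coordinates of $y\mathcal N^-$ with those of $ys\mathcal N^-$ involves the non-abelian commutator structure of $\mathcal N^-$ and is exactly the case analysis you flag as the main obstacle; it is not needed and I do not see how to complete it cleanly.

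The repair, which is how the paper argues, is to apply Theorem~\ref{thm:faltings} only to the \emph{base}: parts (a) and (b) yield an injective continuous map $\psi\colon IyI/I\hookrightarrow y\mathcal N^-$ with $\psi(g_1I)I=g_1I$, i.e.\ a coherent choice of representative $g_1=\psi(g_1I)$ varying algebraically with the coset. Once such a section exists, the twisting in the fibre parametrization disappears, and one writes down the trivialization directly: $(g_1I,g_2I)\mapsto\psi(g_1I)g_2I$ from $(X_{sx\sigma(s)}(b)\cap IyI)\times(IsI/I)$ to $\phi^{-1}(X_{sx\sigma(s)}(b)\cap IyI)$, with inverse $gI\mapsto\bigl(\phi(gI),\psi(\phi(gI))^{-1}gI\bigr)$. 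These are visibly mutually inverse and compatible with the projections, and no case distinction on the relative lengths of $y$ and $ys$ is required.
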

\begin{proof}
Recall that
\begin{align*}
X_x(b) = \{gI\in G(\breve F)/I\mid g^{-1}b\sigma(g)\in IxI\}.
\end{align*}
Following the proof of \cite[Corollary~2.5.3]{Goertz2010b}, we see that $X_2\subseteq X_x(b)$ is given by those elements $gI\in X_x(b)$ such that there exists $g_1I\in gIsI$ with $g_1^{-1} b\sigma(g_1)\in Isx\sigma(s)I$. They also prove that, if such an element $g_1I$ exists, it is uniquely determined by $gI$. The map $\phi : X_2\rightarrow X_{sx\sigma(s)}(b)$ is hence given by
\begin{align*}
\{gI\in X_x(b)\mid \exists! g_1I\in gIsI\cap X_{sx\sigma(s)}(b)\}\rightarrow X_{sx\sigma(s)}(b),\qquad gI\mapsto g_1I.
\end{align*}
Consider the restriction of this projection to $X_{sx\sigma(s)}(b)\cap IyI$. By Theorem~\ref{thm:faltings}, we get a continuous and injective map
\begin{align*}
\psi: IyI/I\hookrightarrow y\mathcal N^- I/I\simeq  y\mathcal N^-,
\end{align*}
such that $g_1I = \psi(g_1I)I$ for all $g_1I\in IyI/I$.

Choose an isomorphism $\mathbb A^1\cong IsI/I$. We then define the promised isomorphism via the two mutually inverse maps
\begin{align*}
(X_{sx\sigma(s)}(b)\cap IyI)\times (IsI/I)\rightarrow& \phi^{-1}(X_{sx\sigma(s)}(b)\cap IyI),\\
(g_1I, g_2I)\mapsto& \psi(g_1I) g_2I
\end{align*}
and
\begin{align*}
\phi^{-1}(X_{sx\sigma(s)}(b)\cap IyI)\rightarrow& (X_{sx\sigma(s)}(b)\cap IyI)\times (IsI/I) ,\\
gI\mapsto &(\phi(gI),~\psi(\phi(gI))^{-1} gI).
\end{align*}
This finishes the proof.
\end{proof}

In order to apply the above result of Görtz--Haines--Kottwitz--Reuman, we need the following simple observation.
\begin{lemma}\label{lem:iwahoriDoubleCosetEstimate}
Let $x,y,z\in\widetilde W$ such that $IxI\cap yIzI\neq\emptyset$. Then
\begin{align*}
\dim (IxI/I~\cap~yIzI/I)\geq \frac 12\left(\ell(x)-\ell(y)+\ell(z)+\ell_R(\cl(x^{-1}yz))\right).
\end{align*}
\end{lemma}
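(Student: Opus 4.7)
The plan is to argue by induction on $\ell(y)$. The base case $\ell(y)=0$ is immediate: when $y\in\Omega$ normalizes $I$, the set $yIzI = IyzI$ is a single Iwahori double coset, so the intersection is either empty or equals $IyzI$; using $\ell(yz)=\ell(z)$, both sides of the inequality evaluate to $\ell(z)$, since $x^{-1}yz = 1$ forces $\ell_R(\cl(\cdot))=0$.

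For the inductive step, I would search for a simple affine reflection $s\in\BS_\af$ that permits a clean length-reducing operation. Two favorable scenarios arise. In the first, $s$ satisfies $ys<y$ and $sz>z$: writing $y=y's$, the length-additivity $IsI\cdot IzI = IszI$ together with a root-subgroup analysis shows that $sIzI/I$ is a closed coordinate hyperplane inside $IszI/I$, namely the locus where the coordinate along the simple affine root subgroup associated to $s$ vanishes. Hence $yIzI/I = y'\cdot sIzI/I$ is codimension one in $y'IszI/I$. Applying the inductive hypothesis to $(x,y',sz)$---using $\ell(y')=\ell(y)-1$, $\ell(sz)=\ell(z)+1$, and $x^{-1}y'sz = x^{-1}yz$, so that the reflection-length term is unchanged---gives a lower bound exceeding the target by exactly one; the intersection with the codimension-one hyperplane costs at most one, and the induction closes (using that the assumed non-emptiness of $IxI\cap yIzI$ produces a top-dimensional component of $IxI\cap y'IszI$ that meets the hyperplane). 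In the symmetric second case, $s$ satisfies $sy<y$ and $sx>x$: left multiplication by $s$ is an isomorphism of $G(\breve F)/I$ converting the intersection to $sIxI\cap y''IzI$ with $y''=sy$, and $sIxI/I$ is codimension one in $IsxI/I$, so the analogous argument applies to $(sx,y'',z)$.

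The main obstacle is the remaining case, where $D_R(y)\subseteq D_L(z)$ and $D_L(y)\subseteq D_L(x)$, so neither clean reduction is available. I would attack this by picking $s\in D_R(y)$ (which automatically lies in $D_L(z)$), writing $y=y's$ and $z=sz'$, and analyzing $sIsz'I/I\subseteq (Iz'I\cup Isz'I)/I$ directly via the root subgroup decomposition and Faltings' big open cell (Theorem~\ref{thm:faltings}). Among the $\ell(z)$ root coordinates parametrizing $sIsz'I/I$, exactly one corresponds to the negative affine root produced by conjugating the simple affine root subgroup by $s$, yielding an extra $\mathbb{A}^1$-factor that is expected to compensate precisely for the potential $\pm 1$ change in $\ell_R(\cl(\cdot))$ when replacing $(x,y,z)$ by $(x,y',z')$. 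The careful combinatorial bookkeeping---in particular, distinguishing whether $\cl(s)$ appears in a minimal reflection factorization of $\cl(x^{-1}yz)$, which determines whether the reflection length drops by one or increases by one under the reduction---is expected to be the main technical difficulty.
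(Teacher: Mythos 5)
Your proposal does not close: the case you yourself flag as the ``main obstacle'' is exactly where the content of the lemma sits, and you leave it unresolved (``is expected to compensate precisely'', ``is expected to be the main technical difficulty''). The paper's proof, which inducts on $\ell(z)$ rather than $\ell(y)$, resolves precisely this tension in its final subcase: when $ya\in\Phi_\af^-$ and only $IxI\cap yIr_azI$ is nonempty, it observes that $(U_{-a}\setminus\{1\})\,Ir_azI\subseteq IzI$, so that $IxI\cap yIzI$ gains one dimension over $IxI\cap yIr_azI$, and then absorbs the possible drop in the reflection-length term via the elementary inequality $1+\ell_R(\cl(x^{-1}yz\,r_{z^{-1}a}))\geq\ell_R(\cl(x^{-1}yz))$. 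Without an argument of this kind your induction simply does not terminate on the hard configuration $D_R(y)\subseteq D_L(z)$, $D_L(y)\subseteq D_L(x)$, and naming the root subgroup that should produce the extra $\mathbb A^1$ is not the same as proving the bound.

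There is also a quantitative gap in your ``favorable'' cases. You enlarge to $y'IszI$, apply induction there to get a lower bound exceeding the target by one, and then intersect with the codimension-one locus $yIzI/I$, claiming the cost is at most one. But the inductive hypothesis only guarantees that \emph{some} irreducible component of $IxI\cap y'IszI$ has dimension at least target$\,{+}\,1$, while the nonemptiness hypothesis only guarantees that \emph{some}, possibly different, component meets the hyperplane; nothing forces a top-dimensional component to meet it, so the conclusion does not follow. The paper runs each reduction in the safe direction: in its first case it exhibits a free action of the affine root subgroup $U_{ya}\subseteq I$ on the \emph{given} intersection $IxI\cap yIzI$ whose fundamental domain is the smaller intersection $IxI\cap yr_aIr_azI$, so the dimension gain of one is certain and nonemptiness passes to the fundamental domain automatically. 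I would restructure your induction along these lines rather than trying to patch the divisor-intersection step.
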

\begin{proof}
Induction on $\ell(z)$. In case $\ell(z)=0$, we get $x = yz$, $\ell(x)=\ell(y)$ and $\dim (yIzI/I)=0$. The claim follows easily.

Let now $\ell(z)>0$ and $a\in \Delta_{\af}$ be a simple affine root with $\ell(r_a z)=\ell(z)-1$.

First consider the case where $y a \in \Phi_\af^+$.
Note that the affine root subgroup $U_a$ acts freely on $IzI/I$ by left multiplication. A fundamental domain for this action is given by $(I\cap r_a I r_a) zI/I$. Note that $r_a I r_a = (I\cap r_a I r_a)\cdot U_{-a}$, with $U_{-a} = r_a U_a r_a$ being the affine root subgroup associated to $-a$. Since $U_{-a}\subseteq zIz^{-1}$, we conclude that the fundamental domain is
\begin{align*}
(I\cap r_a I r_a) zI/I = r_a I r_a zI/I.
\end{align*}
Multiplying by $y$, we see that $U_{ya}$ acts freely on $yIzI$ and a fundamental domain is given by $y r_a Ir_a zI$. Since $U_{ya}\subseteq I$, we see that the left $U_{ya}$-action preserves $IxI$. Thus, $U_{ya}$ acts freely on $IxI\cap y IzI$ with a fundamental domain given by $IxI\cap y r_a I r_a zI$.

Using induction, we get
\begin{align*}
\dim(IxI/I~\cap~yIzI)&\geq \dim U_{ya}+\dim(IxI/I~\cap~yr_a Ir_az I)
\\&\underset{\text{ind}}\geq 1+ \frac 12\left(\ell(x) - \ell(yr_a) + \ell(r_a z) + \ell_R(\cl(x^{-1} yr_a r_a z))\right)
\\&=1+\frac 12\left(\ell(x) - (\ell(y)+1) + (\ell(z)-1)
+ \ell_R(\cl(x^{-1} yz))\right),
\end{align*}
which is the desired claim.

Consider now the case where $y a \in \Phi_\af^-$. Then we get
\begin{align*}
IyIzI = Iyr_a I r_a z I~\cup~IyIr_a z I,
\end{align*}
so at least one of the two intersections
\begin{align*}
IxI~\cap~yr_a Ir_az I,\qquad IxI~\cap~yIr_a zI
\end{align*}
is nonempty.

First consider the case where $IxI\cap yr_a I r_a zI\neq\emptyset$. As above, we get $yr_a I r_a z I\subseteq yIzI$ (this time, the left $U_{ya}$-action might not preserve $IxI$). We conclude
\begin{align*}
\dim (IxI/I~\cap~yIzI)&\geq \dim (IxI/I~\cap~yr_a I r_a zI/I)
\\&\underset{\text{ind.}}\geq
\frac 12\left(\ell(x) - \ell(yr_a) + \ell(r_a z) + \ell_R(\cl(x^{-1} yr_a r_a z))\right)
\\&=\frac 12\left(\ell(x) - (\ell(y)-1) + (\ell(z)-1)
+ \ell_R(\cl(x^{-1} yz))\right),
\end{align*}
which is the desired claim.

So it remains to study the case $IxI\cap y Ir_a zI\neq\emptyset$. For any $1\neq g\in U_{-a}$, we have $g\in Ir_a I$ and hence $gIr_a zI\in I zI$. So $(U_{-a}\setminus\{1\}) Ir_a zI$ is a subset of $IzI$, and for each $h\in (U_{-a}\setminus\{1\})Ir_a zI$, there is a unique $g\in U_{-a}\setminus\{1\}$ with $gh\in Ir_a zI$.

Multiplying by $y$, we see that $(U_{-ya}\setminus\{1\})yIr_azI$ is contained in $yIzI$ and for each $h\in (U_{-ya}\setminus\{1\})yIr_azI$, there exists a unique $g\in U_{-ya}\setminus\{1\}$ with $gh\in yIr_azI$. Note that $g\in I$, so that left multiplication by $g$ preserves $IxI$. As above, we conclude
\begin{align*}
\dim (IxI/I~\cap~yIzI/I)&\geq \dim (IxI/I~\cap~(U_{-ya}\setminus\{1\})yIr_azI)
\\&\geq 1+\dim (IxI/I~\cap~yIr_azI)
\\&\underset{\text{ind.}}\geq 1+\frac 12\left(\ell(x)-\ell(y)+\ell(r_a z)+\ell_R(\cl(x^{-1}yr_a z))\right)
\\&=\frac 12\left(\ell(x)-\ell(y)+\ell(z)+1+\ell_R(\cl(x^{-1} y z r_{z^{-1} a}))\right).
\end{align*}
Since multiplying by a reflection changes reflection length by at most once, we see $1+\ell_R(\cl(x^{-1} y z r_{z^{-1} a}))\geq \ell_R(\cl(x^{-1} yz))$. This finishes the induction and the proof.
\end{proof}
\begin{proposition}\label{prop:coxeterAdlvInSchubertCell}
Assume that $G$ is a split group and that $F = \mathbb F_q\doubleparen t$. Let $x$ be an element of positive Coxeter type, $b\in\widetilde W$ be an element of length zero and $y \in W_a$ a Bruhat minimal element such that $X_x(b)\cap IyI/I\neq\emptyset$. Then $X_x(b)\cap IyI$ contains an irreducible component of $X_x(b)$ and $X_x(b)\subseteq \mathbb J_b IyI/I$.
\end{proposition}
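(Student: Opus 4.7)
The plan is to verify two things, from which both conclusions will follow: first, that $\dim (X_x(b)\cap IyI) = \dim X_x(b)$; second, that Bruhat-minimality of $y$ forces $X_x(b)\cap IyI$ to be \emph{closed} in $X_x(b)$. Together these make $X_x(b)\cap IyI$ a union of irreducible components of $X_x(b)$ (using that $X_x(b)$ is equidimensional by Theorem~\ref{thm:fctConsequences}(b)), giving the first claim. The second claim then follows immediately from Theorem~\ref{thm:fctConsequences}(c): since $\mathbb J_b(F)$ acts transitively on the irreducible components of $X_x(b)$, the union $\mathbb J_b(F)\cdot (X_x(b)\cap IyI)$ meets every irreducible component and hence covers $X_x(b)$, so $X_x(b)\subseteq \mathbb J_b(F)\cdot IyI$.

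For the dimension, combine Theorem~\ref{thm:ghkrDimension} with Lemma~\ref{lem:iwahoriDoubleCosetEstimate} applied to $(x, y^{-1}, by)$. Since $b\in\Omega$ has length zero, $\ell(y^{-1})=\ell(y)=\ell(by)$, so
\[
\dim (X_x(b)\cap IyI) = \dim(IxI/I \cap y^{-1}IbyI/I) \geq \tfrac12\bigl(\ell(x)+\ell_R(\cl(x^{-1}y^{-1}by))\bigr).
\]
By the triangle inequality for reflection length and its conjugation-invariance,
\[
\ell_R(\cl(x^{-1}y^{-1}by))=\ell_R\bigl(w^{-1}\cdot\cl(y)^{-1}\cl(b)\cl(y)\bigr)\geq \ell_R(w)-\ell_R(\cl(b)).
\]
Since $G$ is split and $b\in\Omega$, the class $[b]$ is basic with $\nu(b)$ central (so $\langle\nu(b),2\rho\rangle=0$), and the standard description of the basic inner form gives $\defect(b)=\dim X_\ast(T)-\dim X_\ast(T)^{\Ad(b)}=\ell_R(\cl(b))$. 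Moreover $\ell(v^{-1}\sigma(wv))=\ell_R(w)$ because $(x,v)$ is a positive Coxeter pair (Lemma~\ref{lem:reflectionLength}(c)). Substituting the dimension formula from Theorem~\ref{thm:fctConsequences}(b) yields
\[
\dim (X_x(b)\cap IyI)\geq \tfrac12\bigl(\ell(x)+\ell_R(w)-\defect(b)\bigr)=\dim X_x(b),
\]
hence equality.

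The Bruhat-minimality of $y$ enters only at the very end. Since $\overline{IyI/I}=\bigsqcup_{y'\leq y}Iy'I/I$ and no $y'<y$ satisfies $X_x(b)\cap Iy'I\neq\emptyset$, we have $X_x(b)\cap IyI=X_x(b)\cap\overline{IyI/I}$, which is closed in $X_x(b)$. Together with the dimension equality above and equidimensionality, this closed subset is a union of top-dimensional irreducible components, completing the proof. The only nontrivial ingredient beyond invoking the results stated in the excerpt is the identity $\defect(b)=\ell_R(\cl(b))$ for split $G$ and $b\in\Omega$; this is where I expect to spend the most care, though it reduces to the well-known computation of the $F$-rank of the basic inner form $\mathbb J_b$ in terms of the $\Ad(b)$-fixed subtorus of $T$.
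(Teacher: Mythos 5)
Your proposal is correct and follows essentially the same route as the paper: the same combination of Theorem~\ref{thm:ghkrDimension} with Lemma~\ref{lem:iwahoriDoubleCosetEstimate}, the same reflection-length estimate $\ell_R(\cl(x^{-1}y^{-1}by))\geq \ell_R(w)-\ell_R(\cl(b))$ with $\ell_R(\cl(b))=\defect(b)$, the same closedness argument via Bruhat minimality, and the same appeal to transitivity of the $\mathbb J_b(F)$-action. Your write-up is in fact slightly more careful than the paper's, which asserts $\ell_R(b)=\defect(b)$ and $\langle\nu(b),2\rho\rangle=0$ without comment.
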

\begin{proof}
By Theorem~\ref{thm:ghkrDimension} and Lemma~\ref{lem:iwahoriDoubleCosetEstimate}, we get
\begin{align*}
\dim (X_x(b)\cap IyI) &= \dim\Bigl((IxI/I)\cap (y^{-1} IbyI/I)\Bigr)  \\&\geq \frac 12\left(\ell(x)-\ell(y)+\ell(by) + \ell_R(\cl(xy^{-1} b y))\right).
\end{align*}
Observe that $\ell(y)=\ell(by)$ since $\ell(b)=0$ by assumption. Moreover, we can estimate
\begin{align*}
\ell_R(\cl(xy^{-1} by)) \geq \ell_R(\cl(x)) - \ell_R(\cl(y^{-1} by)) = \ell_R(\cl(x))-\ell_R(b) = \ell_R(\cl(x))-\defect(b).
\end{align*}
We conclude
\begin{align*}
\dim (X_x(b)\cap IyI)\geq \frac 12\left(\ell(x)+\ell_R(\cl(x))-\defect(b)\right).
\end{align*}
By Theorem~\ref{thm:fctConsequences}, the right--hand side agrees with $\dim X_x(b)$.

Note that $X_x(b)\cap \overline{IyI/I}$ is a closed subset of $X_x(b)$. Here, $\overline{IyI/I}$ is the closure of $IyI$ inside the affine flag variety, which coincides with the union of all $Iy'I$ for $y'\leq y$. By the choice of $y$, we get $X_x(b)\cap Iy'I=\emptyset$ for all $y'<y$. Hence $X_x(b)\cap IyI$ is a closed subset of $X_x(b)$. The above calculation shows that it contains an irreducible component $C$ of $X_x(b)$. By Theorem~\ref{thm:fctConsequences}, we get $X_x(b)=\mathbb J_b C\subseteq \mathbb J_b IyI$.
\end{proof}
\begin{proof}[Proof of Theorem~\ref{thm:basicSplitting}]
By Theorem~\ref{thm:fctConsequences}, any two irreducible components of $X_x(b)$ are isomorphic, and the same statement holds for $X_e(b)$. So the choice of $C_x$ resp.\ $C_e$ does not affect the result. Moreover, this theorem shows that the Deligne--Lusztig reduction method yields, inside each fixed reduction tree, a unique reduction path for $[b]\in B(G)$ consisting only of type II edges.

We do induction on the length of such a reduction path. If $x$ has minimal length in its $\sigma$-conjugacy class, then $X_x(b)\cong X_e(b)$ since $x$ and $e$ are strongly conjugate \cite[Theorem~A]{He2014b}. We get $\ell_{II}(b)=0$, so $C_x\cong C_e$ is indeed the claim we had to prove.

In the inductive step, we apply the Deligne--Lusztig reduction method as in the proof of Theorem~\ref{thm:fctConsequences}. So we have to consider elements $x' = sx\sigma(s)$ with $s\in\mathbb S_{\af}$ such that $\ell(x')\leq \ell(x)$ and the claim being proved for $x'$ already.

If $\ell(x') = \ell(x)$, then $X_x(b)\cong X_{x'}(b)$ and also the value of $\ell_{II}(b)$ does not change if we pass from $x$ to $x'$. Assuming the claim being proved for $x'$ by induction, we see that it follows for $x$ as well.

Consider now the case $\ell(x') = \ell(x)-2$. By Proposition~\ref{prop:coxeterAdlvInSchubertCell}, we find some $y\in\widetilde W$ with $X_{x'}(b)\cap IyI$ containing an irreducible component $C_{x'}$ of $X_{x'}(b)$. Using Corollary~\ref{cor:faltingsForAdlv}, we find a closed irreducible subset $C\subseteq X_x(b)$ with $C\cong \mathbb A^1\times C_{x'}$. Comparing dimensions as detailed in Theorem~\ref{thm:fctConsequences}, we see that $C$ is an irreducible component of $X_x(b)$. By induction, we get $C_{x'}\cong \mathbb A^{\ell_{II}(b)-1}\times C_e$. Hence the claim for $C_x = C$ follows. This finishes the induction and the proof.
\end{proof}

\printbibliography
\end{document}